\newif\ifarxiv
\title{Halpern Iteration for Near-Optimal and Parameter-Free\\ Monotone Inclusion and Strong Solutions to Variational Inequalities}
\title[Halpern Iteration for Near-Optimal and Parameter-Free Monotone Inclusion]{Halpern Iteration for Near-Optimal and Parameter-Free\\ Monotone Inclusion and Strong Solutions to Variational Inequalities}
\newtheorem*{rep@theorem}{\rep@title}
\newcommand{\newreptheorem}[2]{%
\newenvironment{rep#1}[1]{%
 \def\rep@title{#2 \ref{##1}}%
 \begin{rep@theorem}}%
 {\end{rep@theorem}}}
\newcommand{\innp}[1]{\left\langle #1 \right\rangle}
\newcommand{\zeros}{\textbf{0}}
\newcommand{\vx}{\mathbf{x}}
\newcommand{\cx}{\mathcal{X}}
\newcommand{\cy}{\mathcal{Y}}
\newcommand{\cu}{\mathcal{U}}
\newcommand{\cc}{\mathcal{C}}
\newcommand{\vy}{\mathbf{y}}
\newcommand{\vv}{\mathbf{v}}
\newcommand{\ve}{\mathbf{e}}
\newcommand{\vu}{\mathbf{u}}
\newcommand{\vub}{\bar{\mathbf{u}}}
\newcommand{\rr}{\mathbb{R}}
\def\mathcolor#1#{\@mathcolor{#1}}
\def\@mathcolor#1#2#3{%
  \protect\leavevmode
  \begingroup
    \color#1{#2}#3%
  \endgroup
}
\newcommand*{\vsepfbox}[1]{%
  \begingroup
    \sbox0{\fbox{#1}}%
    \setlength{\fboxrule}{0pt}%
    \mbox{\kern-\fboxsep\fbox{\unhbox0}\kern-\fboxsep}%
  \endgroup
}
\theoremstyle{plain} \numberwithin{equation}{section}
\newtheorem{theorem}{Theorem}[section]
\numberwithin{theorem}{section}
\newtheorem{lemma}[theorem]{Lemma}
\newtheorem{proposition}[theorem]{Proposition}
\theoremstyle{definition}
\newtheorem{remark}[theorem]{Remark}
\newtheorem{fact}[theorem]{Fact}
\DeclareMathOperator*{\argmin}{argmin}
\newcommand{\subalign}[1]{%
  \vcenter{%
    \Let@ \restore@math@cr \default@tag
    \baselineskip\fontdimen10 \scriptfont\tw@
    \advance\baselineskip\fontdimen12 \scriptfont\tw@
    \lineskip\thr@@\fontdimen8 \scriptfont\thr@@
    \lineskiplimit\lineskip
    \ialign{\hfil$\m@th\scriptstyle##$&$\m@th\scriptstyle{}##$\hfil\crcr
      #1\crcr
    }%
  }%
}
\author{Jelena Diakonikolas\thanks{Part of this research was done while the author was a postdoctoral researcher at UC Berkeley.}\\
Department of Computer Sciences, UW-Madison\\
\texttt{jelena@cs.wisc.edu}}
\date{}
\begin{document}

\maketitle
\begin{abstract}%
    We leverage the connections between nonexpansive maps, monotone Lipschitz operators, and proximal mappings to obtain near-optimal (i.e., optimal up to poly-log factors in terms of iteration complexity) and parameter-free methods for solving monotone inclusion problems. These results immediately translate into near-optimal guarantees for approximating strong solutions to variational inequality problems, approximating convex-concave min-max optimization problems, and minimizing the norm of the gradient in min-max optimization problems. Our analysis is based on a novel and simple potential-based proof of convergence of Halpern iteration, a classical iteration for finding fixed points of nonexpansive maps. Additionally, we provide a series of algorithmic reductions that highlight connections between different problem classes and lead to lower bounds that certify near-optimality of the studied methods.
\end{abstract}
\ifarxiv
\else
\begin{keywords}%
Halpern iteration, monotone inclusion, min-max optimization, variational inequalities.
\end{keywords}
\fi
\section{Introduction}

Given a closed convex set $\cu \subseteq \rr^d$ and a single-valued monotone operator $F:\rr^d\to \rr^d$, i.e., an operator that maps each vector to another vector and satisfies:
\begin{equation}\label{eq:mon-op}
    (\forall \vu, \vv\in \rr^d): \quad \innp{F(\vu) - F(\vv), \vu - \vv} \geq 0,
\end{equation}
the \emph{monotone inclusion} problem consists in finding a point $\vu^*$  that satisfies:
\begin{equation}\label{eq:monotone-inclusion}\tag{MI}
\begin{gathered}
\zeros \in F(\vu) + \partial I_{\cu}(\vu), \; \text{ where }\\
I_{\cu}(\vu) = \begin{cases}
0, &\text{ if } \vu \in \cu,\\
\infty, &\text{ otherwise}
\end{cases}
\end{gathered}
\end{equation}
is the indicator function of the set $\cu\subseteq \rr^d$ and $\partial I_{\cu}(\cdot)$ denotes the subdifferential operator (the set of all subgradients at the argument point) of $I_{\cu}$. 

Monotone inclusion is a fundamental problem in continuous optimization that is closely related to  variational inequalities (VIs) with monotone operators, which  model a plethora  of problems in mathematical programming, game theory, engineering, and finance~\cite[Section 1.4]{facchinei2007finite}. Within machine learning, VIs with monotone operators and associated monotone inclusion problems arise, for example, as an abstraction of convex-concave min-max optimization problems, which naturally model adversarial training~\citep{madry2018towards,arjovsky2017wasserstein,arjovsky2016towards,goodfellow2014generative}. 

When it comes to convex-concave min-max optimization,  approximating the associated VI leads to guarantees in terms of the optimality gap. Such guarantees are generally possible only when the feasible set $\cu$ is bounded; a simple example that demonstrates this fact is $\Phi(\vx, \vy) = \innp{\vx, \vy}$ with the feasible set $\vx, \vy \in \rr^d.$ The only (min-max or saddle-point) solution in this case is obtained when both $\vx$ and $\vy$ are the all-zeros vectors. However, if either $\vx \neq \zeros$ or $\vy \neq \zeros$, then the optimality gap $\max_{\vy' \in \rr^d}\Phi(\vx, \vy') - \min_{\vx' \in \rr^d}\Phi(\vx', \vy)$ is infinite.   

On the other hand, approximate monotone inclusion is well-defined even for unbounded feasible sets. In the context of min-max optimization, it corresponds to guarantees in terms of stationarity. Specifically, in the unconstrained setting, solving monotone inclusion corresponds to minimizing the norm of the gradient of $\Phi.$ Note that even in the special setting of convex optimization, convergence in norm of the gradient is much less understood than convergence in  optimality gap~\citep{nesterov2012make,kim2018optimizing}. Further, unlike classical results for VIs that provide %nonasymptotic 
convergence guarantees for approximating \emph{weak}  solutions~\citep{nemirovski2004prox,nesterov2007dual}, approximations to monotone inclusion lead to approximations to \emph{strong} solutions (see Section~\ref{sec:prelims} for definitions of weak and strong solutions and their relationship to monotone inclusion). 

We leverage the connections between nonexpansive maps, structured monotone operators, and proximal maps to obtain near-optimal algorithms for solving monotone inclusion over different classes of problems with Lipschitz-continuous operators. In particular, we make use of the classical Halpern iteration, which is defined by~\citep{halpern1967fixed}:
\begin{equation}\label{eq:halpern-or}\tag{Hal}
    \vu_{k+1} = \lambda_{k+1}\vu_0 + (1-\lambda_{k+1})T(\vu_k),
\end{equation}
where $T:\rr^d\to\rr^d$ is a nonexpansive map, i.e.,  $\forall \vu, \vv \in \rr^d:$ $\|T(\vu) - T(\vv)\| \leq \|\vu - \vv\|.$ 

In addition to its simplicity, Halpern iteration is particularly relevant to machine learning applications, as it is an \emph{implicitly regularized} method with the following property: if the set of fixed points of $T$ is non-empty, then Halpern iteration~\eqref{eq:halpern-or} started at a point $\vu_0$ and applied with any choice of step sizes $\{\lambda_k\}_{k \geq 1}$ that satisfy all of the following conditions:
\begin{equation}\label{eq:halpern-step-sizes}
    (i) \lim_{k \to \infty} \lambda_k = 0, \quad (ii) \sum_{k=1}^{\infty} \lambda_k = \infty, \quad (iii) \sum_{k=1}^{\infty}|\lambda_{k+1} - \lambda_k| < \infty 
\end{equation}
 converges to the fixed point of $T$ with the minimum $\ell_2$ distance to $\vu_0.$ This result was proved by~\cite{wittmann1992approximation}, who extended a similar though less general result previously obtained by~\cite{browder1967convergence}. The result of \cite{wittmann1992approximation} has since been extended to various other settings~\citep[and references therein]{bauschke1996approximation,xu2002iterative,kohlenbach2011quantitative,kornlein2015quantitative,lieder2019convergence}.
%%%%%%%%%%%%%%%%%
\subsection{Contributions and Related Work}

A special case of what is now known as the Halpern iteration~\eqref{eq:halpern-or} was introduced and its asymptotic convergence properties were analyzed by~\cite{halpern1967fixed} in the setting of $\vu_0 = \zeros$ and $T:\mathcal{B}_2 \to \mathcal{B}_2,$ where $\mathcal{B}_2$ is the unit Euclidean ball. Using the proof-theoretic techniques of~\cite{kohlenbach2008applied}, \cite{leustean2007rates} extracted from the asymptotic convergence result of~\cite{wittmann1992approximation}  the rate at which Halpern iteration converges to a fixed point. The results obtained by~\cite{leustean2007rates} are rather loose and provide guarantees of the form $\|T(\vu_k) - \vu_k\| = O(\frac{M}{\log(k)})$ in the best case (obtained for $\lambda_k = \Theta(\frac{1}{k})$), where $M \geq \|\vu_0\| + \|T(\vu_0)\| + \|\vu_k\|,$ $\forall k.$ {A tighter result that shows that $\|T(\vu_k) - \vu_k\|$ decreases at rate that is at least as good as $1/\sqrt{k}$ was obtained by~\cite{kohlenbach2011quantitative}.   The results of~\cite{leustean2007rates} and~\cite{kohlenbach2011quantitative} apply to general normed spaces. The work of~\cite{kohlenbach2011quantitative} also provided an explicit rate of metastability that characterizes the convergence of the sequence of iterates $\{\vu_k\}$ in Hilbert spaces.} %and it holds for general normed spaces. 

More recently, \cite{lieder2019convergence} proved that under the standard assumption that $T$ has a fixed point $\vu^*$ and for the step size $\lambda_k = \frac{1}{k+1},$  Halpern iteration converges to a fixed point as $\|T(\vu_k) - \vu_k\| = \frac{2\|\vu_0 - \vu^*\|}{k+1}.$ A similar result but for an alternative algorithm was recently obtained by~\cite{kim2019accelerated}. {These two results (as well as all the results from this paper) only apply to Hilbert spaces.} Unlike Halpern iteration, the algorithm introduced by \cite{kim2019accelerated} is not known to possess the implicit regularization property discussed earlier in this paper. 
The results of \cite{lieder2019convergence} and \cite{kim2019accelerated} can be used to obtain the same $1/k$ convergence rate for monotone inclusion with a cocoercive operator \emph{but only if the cocoercivity parameter is known}, which is rarely the case in practice. Similarly, those results can also be extended to more general monotone Lipschitz operators \emph{but only if the proximal map (or resolvent) of $F$ can be computed exactly}, an assumption that can rarely be met  (see Section~\ref{sec:prelims} for definitions of cocoercive operators and proximal maps). We also note that the results of \cite{lieder2019convergence} and \cite{kim2019accelerated} were obtained using the performance estimation (PEP) framework of \cite{drori2014performance}. The convergence proofs resulting from the use of PEP are computer-assisted: they are generated as solutions to large semidefinite programs, which typically makes them hard to interpret and generalize. 

Our approach is arguably simpler, as it relies on the use of a potential function, which allows us to remove the assumptions about the knowledge of the problem parameters and availability of exact proximal maps.   
Our main contributions are summarized as follows:
%\begin{itemize}
    %\item 
    \paragraph{Results for cocoercive operators.}
    We introduce a new, potential-based, proof of convergence of Halpern iteration that applies to more general step sizes $\lambda_k$ than handled by the analysis of~\cite{lieder2019convergence} (Section~\ref{sec:main-results}). The proof is simple and only requires elementary algebra. Further, the proof is derived for cocoercive operators and leads to a \emph{parameter-free} algorithm for monotone inclusion. 
    We also extend this parameter-free method to the constrained setting using the concept of gradient mapping generalized to monotone operators (Section~\ref{sec:constrained-case}). To the best of our knowledge, this is the first work to obtain the $1/k$ convergence rate with a parameter-free method.
    %
    %\item 
    \paragraph{Results for monotone Lipschitz operators.}
    Up to a logarithmic factor, we obtain the same $1/k$ convergence rate for the parameter-free setting of the more general monotone Lipschitz operators (Section~\ref{sec:mon-Lip}). The best known convergence rate established by previous work for the same setting was of the order $1/\sqrt{k}$~\citep{dang2015convergence,ryu2019ode}. We obtain the improved convergence rate through the use of the Halpern iteration with \emph{inexact} proximal maps that can be implemented efficiently. The idea of coupling inexact proximal maps with another method is similar in spirit to the Catalyst framework~\citep{lin2017catalyst} and other instantiations of the inexact proximal-point method, such as, e.g., in the work of~\cite{davis2019stochastic,asi2019stochastic,lin2018solving}. However, we note that, unlike in the previous work, the coupling used here is with a method (Halpern iteration) whose convergence properties were not well-understood and for which no simple potential-based convergence proof existed prior to our work. %Proof of convergence of Halpern iteration for inexact evaluations of the operator, which leads to an implementable version of a near-optimal method for solving monotone inclusion for general Lipschitz monotone (not necessarily cocoercive) operators.
    %
    %\item
    \paragraph{Results for strongly monotone Lipschitz operators.}
    We show that a simple restarting-based approach applied to our method  for operators that are only monotone and Lipschitz (described above) leads to a \emph{parameter-free} method for strongly monotone and Lipschitz operators (Section~\ref{sec:strongly-mon}). Under mild assumptions about the problem parameters and up to a poly-logarithmic factor, the resulting algorithm is \emph{iteration-complexity-optimal}. To the best of our knowledge, this is \emph{the first near-optimal parameter-free method} for the setting of strongly monotone Lipschitz operators and any of the associated problems -- monotone inclusion, VIs, or convex-concave min-max optimization.   
    %
    %\item
    \paragraph{Lower bounds.}
    To certify near-optimality of the analyzed methods, we provide lower bounds that rely on algorithmic reductions between different problem classes and highlight connections between them (Section~\ref{sec:optimality}). The lower bounds are derived by leveraging the recent lower bound of~\cite{Ouyang2019} for approximating the optimality gap in convex-concave min-max optimization. 
%\end{itemize}
%
%
\subsection{Notation and Preliminaries}\label{sec:prelims}

Let $(E, \|\cdot\|)$ be a real $d$-dimensional Hilbert space, with norm $\|\cdot\| = \sqrt{\innp{\cdot, \cdot}},$ where $\innp{\cdot, \cdot}$ denotes the inner product. In particular, one may consider the Euclidean space $(\rr^d, \|\cdot\|_2).$ Definitions that were already introduced at the beginning of the paper easily generalize from $(\rr^d, \|\cdot\|_2)$ to $(E, \|\cdot\|)$, and are not repeated here for space considerations. 
%
%%%%%%%%%%%%%%%%%%%%%%%%%%
\paragraph{Variational Inequalities and  Monotone Operators.} 
Let $\cu \subseteq E$ be closed and convex, and 
let $F: E \rightarrow E$ be an $L$-Lipschitz-continuous operator defined on $\cu.$ Namely, we assume that:
\begin{equation}\label{eq:lipschitzness}%\tag{A$_1$}
    (\forall \vu, \vv \in \cu):\quad \|F(\vu) - F(\vv)\| \leq L\|\vu - \vv\|. 
\end{equation}
The definition of monotonicity was already provided in Eq.~\eqref{eq:mon-op}, and easily specializes to monotonicity on the set $\cu$ by restricting $\vu, \vv $ to be from $\cu.$ Further, $F$ is said to be: 
\begin{enumerate}
%
%\item\emph{monotone} on $\cu$, if:
%\begin{equation}\label{eq:mon-op}
%    (\forall \vu, \vv \in \cu):\quad \innp{F(\vu) - F(\vv), \vu - \vv} \geq 0;
%\end{equation}
%
\item \emph{strongly monotone} (or \emph{coercive}) on $\cu$ with parameter $m$, if:
\begin{equation}\label{eq:str-mon-op}
    (\forall \vu, \vv \in \cu):\quad \innp{F(\vu) - F(\vv), \vu - \vv} \geq \frac{m}{2}\|\vu - \vv\|^2;
\end{equation}
\item \emph{cocoercive} on $\cu$ with parameter $\gamma$, if:
\begin{equation}\label{eq:cocoercive-op}
    (\forall \vu, \vv \in \cu):\quad \innp{F(\vu) - F(\vv), \vu - \vv} \geq \gamma\|F(\vu) - F(\vv)\|^2.
\end{equation}
\end{enumerate}
It is immediate from the definition of cocoercivity that 
%\begin{itemize}
%    \item 
    every $\gamma$-cocoercive operator is monotone and $1/\gamma$-Lipschitz. The latter follows by applying the Cauchy-Schwarz inequality to the left-hand side of Eq.~\eqref{eq:cocoercive-op} and then dividing both sides by $\gamma\|F(\vu) - F(\vv)\|$.
    %
%    \item Every $L$-Lipschitz and $m$-strongly monotone operator is also $\frac{m}{2L^2}$-cocoercive. This follows by applying $\|\vu - \vv\|^2 \geq \frac{1}{L^2}\|F(\vu) - F(\vv)\|^2$, which follows from $L$-Lipschitz continuity of $F,$ to the right-hand side of Eq.~\eqref{eq:str-mon-op}. 
%\end{itemize}

Examples of monotone operators include the gradient of a convex function and appropriately modified gradient of a convex-concave function. Namely, if a function $\Phi(\vx, \vy)$ is convex in $\vx$ and concave in $\vy,$ then $F([\subalign{\vx\\ \vy}]) = [\subalign{\nabla_\vx \Phi(\vx, \vy)\\ -\nabla_{\vy} \Phi(\vx, \vy)}]$ is monotone. %See~\cite{facchinei2007finite} for more examples. 

The Stampacchia Variational Inequality (SVI) problem consists in finding $\vu^* \in \cu$ such that:
\begin{equation}\label{eq:SVI}\tag{SVI}
    (\forall \vu \in \cu): \quad \innp{F(\vu^*), \vu - \vu^*} \geq 0.
\end{equation}
In this case, $\vu^*$ is also referred to as a \emph{strong} solution to the variational inequality (VI) corresponding to $F$ and $\cu$. 
The Minty Variational Inequality (MVI) problem consists in finding $\vu^*$ such that:
\begin{equation}\label{eq:MVI}\tag{MVI}
    (\forall \vu \in \cu): \quad \innp{F(\vu), \vu^* - \vu} \leq 0,
\end{equation}
in which case $\vu^*$ is referred to as a \emph{weak} solution to the variational inequality corresponding to $F$ and $\cu$.  In general, if $F$ is continuous, then the solutions to~\eqref{eq:MVI} are a subset of the solutions to~\eqref{eq:SVI}. 
If we assume that $F$ is monotone, then~\eqref{eq:mon-op} implies that every solution to~\eqref{eq:SVI} is also a solution to \eqref{eq:MVI}, and thus the two solution sets are equivalent.  
The solution set to monotone inclusion is the same as the solution set to~\eqref{eq:SVI}. 

Approximate versions of variational inequality problems \eqref{eq:SVI} and \eqref{eq:MVI} are defined as follows: Given $\epsilon > 0,$ find an $\epsilon$-approximate solution $\vu^*_\epsilon \in \cu,$ which is a solution that satisfies:
\begin{align}%\label{eq:SVI-eps}\tag{SVI$_\epsilon$}
    &(\forall \vu \in \cu): \quad \innp{F(\vu^*_\epsilon), \vu_\epsilon^* - \vu} \leq \epsilon, \;\text{ or}\notag\\
%\end{equation}
%\begin{equation}
%\label{eq:MVI-eps}\tag{MVI$_\epsilon$}
    &(\forall \vu \in \cu): \quad \innp{F(\vu), \vu_\epsilon^* - \vu} \leq \epsilon,\; \text{respectively.}\notag
\end{align}
Clearly, when $F$ is monotone, an $\epsilon$-approximate solution to~\eqref{eq:SVI} is also an $\epsilon$-approximate solution to~\eqref{eq:MVI}; the reverse does \emph{not} hold in general.

Similarly, $\epsilon$-approximate monotone inclusion can be defined as fidning $\vu^*_\epsilon$ that satisfies:
\begin{equation}\label{eq:approx-mon-incl}
    \zeros \in F(\vu^*_\epsilon) + \partial I_{\cu}(\vu^*_\epsilon) + \mathcal{B}(\epsilon),
\end{equation}
where $\mathcal{B}(\epsilon)$ is the ball w.r.t.~$\|\cdot\|$, centered at $\zeros$ and of radius $\epsilon.$ We will sometimes write Eq.~\eqref{eq:approx-mon-incl} in the equivalent form $-F(\vu^*_\epsilon) \in \partial I_{\cu}(\vu^*_\epsilon) + \mathcal{B}(\epsilon).$ The following fact is immediate from Eq.~\eqref{eq:approx-mon-incl}.
\begin{fact}\label{fact:approx-mon-incl}
Given $F$ and $\cu,$ let $\vu^*_\epsilon$ satisfy Eq.~\eqref{eq:approx-mon-incl}. Then:
$$  
    (\forall \vu \in \{\cu \cap \mathcal{B}_{\vu^*_\epsilon}\}): \quad \innp{F(\vu^*_\epsilon), \vu_\epsilon^* - \vu} \leq \epsilon,
$$
where $\mathcal{B}_{\vu^*_\epsilon}$ denotes the unit ball w.r.t.~$\|\cdot\|,$ centered at $\mathcal{B}_{\vu^*_\epsilon}.$

Further, if the diameter of $\cu$, $D = \sup_{\vu, \vv \in \cu}\|\vu - \vv\|$, is bounded, then:
$$  
    (\forall \vu \in \cu ): \quad \innp{F(\vu^*_\epsilon), \vu_\epsilon^* - \vu} \leq \epsilon D.
$$
\end{fact}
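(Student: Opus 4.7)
The plan is to unpack the meaning of Eq.~\eqref{eq:approx-mon-incl} using the definition of the subdifferential of the indicator function, and then apply Cauchy--Schwarz to convert the perturbation ball $\mathcal{B}(\epsilon)$ into the $\epsilon$ slack on the right-hand side of the claimed inequality.

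Concretely, first I would rewrite the membership $-F(\vu_\epsilon^*) \in \partial I_{\cu}(\vu_\epsilon^*) + \mathcal{B}(\epsilon)$ as the existence of $\vg \in \partial I_{\cu}(\vu_\epsilon^*)$ and $\vb \in E$ with $\|\vb\| \leq \epsilon$ such that $-F(\vu_\epsilon^*) = \vg + \vb$. Then I would invoke the defining subgradient inequality for $I_{\cu}$: for every $\vu \in \cu$,
\begin{equation*}
    0 = I_{\cu}(\vu) \geq I_{\cu}(\vu_\epsilon^*) + \innp{\vg, \vu - \vu_\epsilon^*} = \innp{\vg, \vu - \vu_\epsilon^*},
\end{equation*}
where I used $\vu_\epsilon^* \in \cu$ so $I_{\cu}(\vu_\epsilon^*) = 0$. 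Substituting $\vg = -F(\vu_\epsilon^*) - \vb$ and rearranging gives
\begin{equation*}
    \innp{F(\vu_\epsilon^*), \vu_\epsilon^* - \vu} \leq \innp{\vb, \vu - \vu_\epsilon^*}.
\end{equation*}

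Next I would apply the Cauchy--Schwarz inequality on the right-hand side to obtain $\innp{\vb, \vu - \vu_\epsilon^*} \leq \|\vb\|\,\|\vu - \vu_\epsilon^*\| \leq \epsilon \|\vu - \vu_\epsilon^*\|$. The first claim follows immediately, since $\vu \in \cu \cap \mathcal{B}_{\vu_\epsilon^*}$ implies $\|\vu - \vu_\epsilon^*\| \leq 1$. The second claim follows by bounding $\|\vu - \vu_\epsilon^*\|$ by the diameter $D$ whenever $\vu, \vu_\epsilon^* \in \cu$.

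There is no genuine obstacle here: the argument is a direct application of the definition of the subdifferential of the indicator function (which encodes the standard normal-cone variational inequality for $\cu$) combined with a one-line Cauchy--Schwarz estimate. The only thing to be careful about is unpacking the Minkowski sum notation in Eq.~\eqref{eq:approx-mon-incl} correctly and keeping track of signs when passing from the subgradient inequality to the inner product with $F(\vu_\epsilon^*)$.
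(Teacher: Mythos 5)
Your proof is correct and is exactly the argument that the paper implicitly has in mind (the paper states that the fact ``is immediate'' from Eq.~\eqref{eq:approx-mon-incl} and omits the details). The decomposition $-F(\vu_\epsilon^*)=\vg+\vb$ with $\vg\in\partial I_{\cu}(\vu_\epsilon^*)$ and $\|\vb\|\leq\epsilon$, the normal-cone inequality $\innp{\vg,\vu-\vu_\epsilon^*}\leq 0$ for $\vu\in\cu$, and a one-line Cauchy--Schwarz bound is the unique natural route; you also correctly note the implicit premise that $\vu_\epsilon^*\in\cu$ (needed for $\partial I_{\cu}(\vu_\epsilon^*)$ to be nonempty), so the sign bookkeeping is sound.
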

Thus, when the diameter $D$ is bounded, any $\frac{\epsilon}{D}$-approximate solution to monotone inclusion is an $\epsilon$-approximate solution to~\eqref{eq:SVI} (and thus also to~\eqref{eq:MVI}); the converse does \emph{not} hold in general. Recall that when $D$ is unbounded, neither~\eqref{eq:SVI} nor~\eqref{eq:MVI} can be approximated. %in the sense defined here.

We assume throughout the paper that a solution to monotone inclusion~\eqref{eq:monotone-inclusion} exists. This assumption implies that solutions to both~\eqref{eq:SVI} and~\eqref{eq:MVI} exist as well. Existence of solutions follows from standard results and is guaranteed whenever e.g., $\cu$ is compact, or, if there exists a compact set $\cu'$ such that $\mathrm{Id} - \frac{1}{L}F$ maps $\cu'$ to itself~\citep{facchinei2007finite}. 
%
%
%%%%%%%%%%%%%%%%%%%%%%%%
\paragraph{Nonexpansive Maps.}

Let $T:E\to E$. We say that $T$ is \emph{nonexpansive} on $\cu \subseteq E$, if $\forall \vu, \vv \in \cu:$
    $$
        \|T(\vu) - T(\vv)\| \leq \|\vu - \vv\|.
    $$
 Nonexpansive maps are closely related to cocoercive operators, and here we summarize some of the basic properties that are used in our analysis. More information can be found in, e.g., the book by~\cite{bauschke2011convex}. %\cite[Proposition 4.11]{bauschke2011convex}: 
 \begin{fact}\label{fact:nonexp-cocoerc-equiv}
 $T$ is nonexpansive if and only if $\mathrm{Id} - T$ is $\frac{1}{2}$-cocoercive, where $\mathrm{Id}$ is the identity map.
 \end{fact}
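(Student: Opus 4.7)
The plan is to prove the equivalence by a direct algebraic computation that is manifestly reversible, so that both implications follow simultaneously. Set $F \defeq \mathrm{Id} - T$, so that for any $\vu, \vv \in E$ we have $F(\vu) - F(\vv) = (\vu - \vv) - (T(\vu) - T(\vv))$. Introduce shorthand $a \defeq \vu - \vv$ and $b \defeq T(\vu) - T(\vv)$; then the statement ``$F$ is $\tfrac{1}{2}$-cocoercive'' becomes $\innp{a - b, a} \geq \tfrac{1}{2}\|a - b\|^2$, and ``$T$ is nonexpansive'' becomes $\|b\|^2 \leq \|a\|^2$.

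Next, I would expand the cocoercivity inequality using bilinearity of the inner product and the identity $\|a - b\|^2 = \|a\|^2 - 2\innp{a, b} + \|b\|^2$. The left-hand side is $\|a\|^2 - \innp{a, b}$, and the right-hand side is $\tfrac{1}{2}\|a\|^2 - \innp{a, b} + \tfrac{1}{2}\|b\|^2$. Subtracting the right-hand side from the left, the $\innp{a,b}$ terms cancel exactly, leaving $\tfrac{1}{2}\|a\|^2 - \tfrac{1}{2}\|b\|^2 \geq 0$, i.e., $\|b\| \leq \|a\|$.

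Since each step above is an equivalence (no inequalities are relaxed and no Cauchy--Schwarz or similar one-directional step is used), the chain of implications runs in both directions, giving the ``if and only if''. I would then state that the conclusion holds for every pair $\vu, \vv$ simultaneously on both sides, so the equivalence is between the global properties of nonexpansiveness of $T$ and $\tfrac{1}{2}$-cocoercivity of $\mathrm{Id} - T$.

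There is no serious obstacle here: the only thing to be careful about is to expand $\|a - b\|^2$ correctly and verify that the cross term $\innp{a, b}$ truly cancels so that the manipulation is an equivalence rather than a one-way inequality. Beyond that the argument is just four lines of elementary algebra.
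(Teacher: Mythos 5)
Your proof is correct, and the algebra checks out exactly as you describe: writing $a = \vu - \vv$, $b = T(\vu) - T(\vv)$, the inequality $\innp{a-b,a} \geq \tfrac12\|a-b\|^2$ reduces after expansion to $\tfrac12\|a\|^2 - \tfrac12\|b\|^2 \geq 0$, with the $\innp{a,b}$ terms cancelling identically, so the two conditions are equivalent for every pair $(\vu,\vv)$. The paper itself does not supply a proof of this fact (it defers to the Bauschke--Combettes monograph), and your direct computation is the standard elementary argument one finds there; there is nothing missing and nothing to object to.
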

$T$ is said to be \emph{firmly nonexpansive} or \emph{averaged}, if $\forall \vu, \vv \in \cu:$
$$
    \|T(\vu) - T(\vv)\|^2 + \|(\mathrm{Id} - T)\vu - (\mathrm{Id} - T)\vv\|^2 \leq \|\vu - \vv\|^2.
$$
Useful properties of firmly nonexpansive maps are summarized in the following fact. 
\begin{fact}\label{fact:firmly-ne}
For any firmly nonexpansive operator $T,$ $\mathrm{Id} - T$ is also firmly non-expansive, and, moreover, both $T$ and $\mathrm{Id} - T$ are 1-cocoercive. 
\end{fact}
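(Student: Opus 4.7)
The plan is to exploit the evident symmetry in the definition of firm nonexpansiveness for the first claim and then derive the 1-cocoercivity estimates as a direct algebraic consequence by comparing the firm-nonexpansiveness inequality to the squared-norm identity $\|\vu - \vv\|^2 = \|T(\vu) - T(\vv) + (\mathrm{Id}-T)(\vu) - (\mathrm{Id}-T)(\vv)\|^2$.

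For the first part, I would note that the defining inequality
\[
    \|T(\vu) - T(\vv)\|^2 + \|(\mathrm{Id} - T)\vu - (\mathrm{Id} - T)\vv\|^2 \leq \|\vu - \vv\|^2
\]
is manifestly invariant under the substitution $T \leftrightarrow \mathrm{Id} - T$, because $\mathrm{Id} - (\mathrm{Id} - T) = T$. So, denoting $S = \mathrm{Id} - T$, the firm-nonexpansiveness inequality for $S$ is the same inequality with the two terms on the left-hand side swapped, hence $S$ is firmly nonexpansive whenever $T$ is.

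For the 1-cocoercivity, I would introduce the shorthand $a \defeq T(\vu) - T(\vv)$ and $b \defeq (\mathrm{Id}-T)\vu - (\mathrm{Id}-T)\vv$, and observe that by construction $a + b = \vu - \vv$. Expanding gives
\[
    \|\vu - \vv\|^2 \;=\; \|a\|^2 + 2\innp{a, b} + \|b\|^2.
\]
Combining this with the firm-nonexpansiveness assumption $\|a\|^2 + \|b\|^2 \leq \|\vu - \vv\|^2$ yields $\innp{a, b} \geq 0$. Rewriting $b = (\vu - \vv) - a$, this inequality is exactly
\[
    \innp{T(\vu) - T(\vv),\, \vu - \vv} \;\geq\; \|T(\vu) - T(\vv)\|^2,
\]
which is 1-cocoercivity of $T$. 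The identical argument with $a$ and $b$ swapped (or, equivalently, applying what we just proved to $S = \mathrm{Id} - T$, which is firmly nonexpansive by the first part) gives 1-cocoercivity of $\mathrm{Id} - T$.

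There is no real obstacle here: the proof is a short algebraic manipulation and both parts reduce to the single observation that the cross term $2\innp{a,b}$ in the expansion of $\|a+b\|^2$ must be nonnegative under firm nonexpansiveness. The only thing to be slightly careful about is to state the 1-cocoercivity in the form of Eq.~\eqref{eq:cocoercive-op} with $\gamma = 1$, rather than the weaker $\tfrac{1}{2}$-cocoercivity of $\mathrm{Id} - T$ that would follow merely from Fact~\ref{fact:nonexp-cocoerc-equiv} applied to a nonexpansive $T$.
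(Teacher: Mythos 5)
Your proof is correct. The paper states this as a Fact without proof, deferring to the textbook by Bauschke and Combettes, so there is no proof in the paper to compare against; your argument is the standard one. The symmetry observation $\mathrm{Id} - (\mathrm{Id} - T) = T$ correctly disposes of the first claim, and the expansion $\|\vu - \vv\|^2 = \|a\|^2 + 2\innp{a,b} + \|b\|^2$ combined with firm nonexpansiveness to extract $\innp{a,b} \ge 0$, followed by the substitution $b = (\vu - \vv) - a$, cleanly yields $\innp{T(\vu) - T(\vv), \vu - \vv} \ge \|T(\vu) - T(\vv)\|^2$, which is $1$-cocoercivity in the sense of Eq.~\eqref{eq:cocoercive-op}. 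Your closing remark is also apt: going through Fact~\ref{fact:nonexp-cocoerc-equiv} would only give $\tfrac12$-cocoercivity of $\mathrm{Id} - T$, so the direct argument from firm nonexpansiveness is genuinely needed here.
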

%
%
%%%%%%%%%%%%
\section{Halpern Iteration for Monotone Inclusion and Variational Inequalities}\label{sec:main-results}
Halpern iteration is typically stated for nonexpansive maps $T$ as in~\eqref{eq:halpern-or}. %:
%$$
%    \vu_{k+1} = \lambda_{k+1}\vu_0 + (1-\lambda_{k+1})T(\vu_k),
%$$
%where $\vu_0\in \cu$ is the initial point and $\lambda_k \in (0, 1),$ $\forall k.$ 
%
Because our interest is in cocoercive operators $F$ with the unknown parameter $1/L,$ we  instead work with the following version of the Halpern iteration:
\begin{equation}\label{eq:halpern}\tag{H}
    \vu_{k+1} = \lambda_{k+1}\vu_0 + (1-\lambda_{k+1})\Big(\vu_k - \frac{2}{L_{k+1}}F(\vu_k)\Big),
\end{equation}
where $L_k \in (0, \infty),\, \forall k.$ 
If $L$ was known, we could simply set $L_{k+1} = L,$ in which case~\eqref{eq:halpern} would be equivalent to the standard Halpern iteration, due to Fact~\ref{fact:nonexp-cocoerc-equiv}. We assume throughout that $\lambda_1 = \frac{1}{2}.$

We start with the assumption that the setting is unconstrained: $\cu \equiv E.$ We will see in Section~\ref{sec:constrained-case} how the result can be extended to the constrained case. Section~\ref{sec:mon-Lip} will consider the case of operators that are monotone and Lipschitz, while Section~\ref{sec:strongly-mon} will deal with the strongly monotone and Lipschitz case. Some of the proofs are omitted and are instead provided in Appendix~\ref{appx:omitted-proofs}. 

To analyze the convergence of~\eqref{eq:halpern} for the appropriate choices of sequences $\{\lambda_i\}_{i\geq 1}$ and $\{L_i\}_{i\geq 1},$ we  make use of the following potential function:
\begin{equation}\label{eq:potential}
    \cc_k = \frac{1}{L_k}\|F(\vu_k)\|^2 -  \frac{\lambda_k}{1-\lambda_k}\innp{F(\vu_k), \vu_0 - \vu_k}.
\end{equation}
%where ${A_i}_{i\geq 1}$ is a sequence of positive numbers defined recursively by $A_1 = 1$ and $A_{k+1} = A_{k} \frac{\lambda_k}{(1-\lambda_k)\lambda_{k+1}}.$ 

Let us first show that if $A_k\cc_k$ is non-increasing with $k$ for an appropriately chosen sequence of positive numbers $\{A_k\}_{k \geq 1},$  then we can deduce a property that, under suitable conditions on $\{\lambda_i\}_{i\geq 1}$ and $\{ L_i\}_{i\geq 1},$ implies a convergence rate for~\eqref{eq:halpern}.
\begin{lemma}\label{lemma:c-k-non-inc}
    Let $\cc_k$ be defined as in Eq.~\eqref{eq:potential} and let $\vu^*$ be the solution to~\eqref{eq:monotone-inclusion} that minimizes $\|\vu_0 - \vu^*\|$. % . Assume that there exists a point $\vu^* \in \cu$ that solves~\eqref{eq:MVI}. 
    Assume further that $\innp{F(\vu_1) - F(\vu_0), \vu_1 - \vu_0} \geq \frac{1}{L_1}\|F(\vu_1) - F(\vu_0)\|^2.$ If $A_{k+1}\cc_{k+1} \leq A_k\cc_k,$ $\forall k \geq 1,$ where $\{A_i\}_{i\geq 1}$ is a sequence of positive numbers that satisfies $A_1 = 1$, then:
    $$
       (\forall k \geq 1):\quad \|F(\vu_k)\| \leq L_{k}\frac{\lambda_k}{1-\lambda_k}{\|\vu_0 - \vu^*\|}.
    $$
\end{lemma}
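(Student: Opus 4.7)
The plan is to prove the bound in three moves: establish that $\cc_1 \leq 0$ using the given cocoercivity-like hypothesis at the first step, propagate this through the assumption $A_{k+1}\cc_{k+1} \leq A_k\cc_k$ (with $A_k > 0$) to conclude $\cc_k \leq 0$ for every $k \geq 1$, and then extract the claimed norm bound on $F(\vu_k)$ using monotonicity of $F$ together with Cauchy--Schwarz.

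For the base step, I would unfold $\vu_1$. With $\lambda_1 = 1/2$, the update~\eqref{eq:halpern} gives $\vu_1 = \vu_0 - \tfrac{1}{L_1}F(\vu_0)$, so $\vu_0 - \vu_1 = \tfrac{1}{L_1}F(\vu_0)$. Substituting into~\eqref{eq:potential} and using $\lambda_1/(1-\lambda_1) = 1$ yields
\[
    \cc_1 = \frac{1}{L_1}\bigl(\|F(\vu_1)\|^2 - \innp{F(\vu_1), F(\vu_0)}\bigr).
\]
The hypothesis $\innp{F(\vu_1) - F(\vu_0), \vu_1 - \vu_0} \geq \tfrac{1}{L_1}\|F(\vu_1) - F(\vu_0)\|^2$, after plugging in $\vu_1 - \vu_0 = -\tfrac{1}{L_1}F(\vu_0)$, multiplying by $L_1$, and cancelling the common $\|F(\vu_0)\|^2$ and cross terms that appear on both sides, reduces exactly to $\innp{F(\vu_1), F(\vu_0)} \geq \|F(\vu_1)\|^2$. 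Hence $\cc_1 \leq 0$, and since $A_1 = 1 > 0$ the chain $A_k \cc_k \leq A_1 \cc_1 \leq 0$ gives $\cc_k \leq 0$ for every $k \geq 1$.

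For the extraction step, $\cc_k \leq 0$ rearranges to $\tfrac{1}{L_k}\|F(\vu_k)\|^2 \leq \tfrac{\lambda_k}{1-\lambda_k}\innp{F(\vu_k), \vu_0 - \vu_k}$. In the unconstrained setting $\cu \equiv E$ assumed in this section, a solution to~\eqref{eq:monotone-inclusion} satisfies $F(\vu^*) = \zeros$, so monotonicity of $F$ gives $\innp{F(\vu_k), \vu_k - \vu^*} \geq 0$. Writing $\vu_0 - \vu_k = (\vu_0 - \vu^*) - (\vu_k - \vu^*)$ and applying Cauchy--Schwarz then yields $\innp{F(\vu_k), \vu_0 - \vu_k} \leq \|F(\vu_k)\|\,\|\vu_0 - \vu^*\|$. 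Combining the two inequalities and dividing by $\|F(\vu_k)\|$ (the case $F(\vu_k) = \zeros$ being trivial) produces the claimed bound.

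The main obstacle is the short algebraic manipulation used to show $\cc_1 \leq 0$; everything downstream is then bookkeeping. I also note that the specific choice of $\vu^*$ as the minimum-norm solution is not used in this argument—any solution to~\eqref{eq:monotone-inclusion} would give the same inequality—so this choice is presumably retained to align the bound with the implicit-regularization interpretation of Halpern iteration emphasized earlier in the paper.
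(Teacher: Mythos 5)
Your proof is correct and takes essentially the same route as the paper's: show $\cc_1 \leq 0$ by unfolding $\vu_1 = \vu_0 - \tfrac{1}{L_1}F(\vu_0)$, propagate to $\cc_k \leq 0$ via positivity of $A_k$, and extract the bound by splitting $\vu_0 - \vu_k$ through $\vu^*$ and applying Cauchy--Schwarz. The only cosmetic difference is in justifying $\innp{F(\vu_k), \vu^* - \vu_k} \leq 0$: you use $F(\vu^*) = \zeros$ plus monotonicity directly, while the paper invokes the MVI characterization of $\vu^*$; in the unconstrained setting these are the same inequality, and your closing observation about the choice of $\vu^*$ matches the paper's own remark.
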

%
\iffalse
\begin{proof}
The statement holds trivially if $\|F(\vu_k)\| = 0,$ so assume that $\|F(\vu_k)\|>0.$ 
Under the assumption of the lemma, we have that $A_k\cc_k \leq \cc_1,$ $\forall k \geq 1.$ From~\eqref{eq:halpern} and $\lambda_1 = \frac{1}{2}$, $\vu_1 = \vu_0 - \frac{1}{ L_1}F(\vu_0),$ and thus: 
$%\begin{align*}
    \cc_1 = \frac{1}{ L_1}\|F(\vu_1)\|^2 - \frac{1}{ L_1}\|F(\vu_0)\|^2. 
$%\end{align*}

As $\innp{F(\vu_1) - F(\vu_0), \vu_1 - \vu_0} \geq \frac{1}{ L_1}\|\vu_1 - \vu_0\|^2$ and $\vu_1 = \vu_0 - \frac{1}{ L_1}F(\vu_0),$ it follows that $\|F(\vu_1)\| \leq \|F(\vu_0)\|,$ and, thus $\cc_1 \leq 0.$ 
%
Further, as $A_k >0,$ we also have $\cc_k \leq 0,$ and, hence:
\begin{align*}
    \|F(\vu_k)\|^2 &\leq  L_k\frac{\lambda_k}{1-\lambda_k}\innp{F(\vu_k), \vu_0 - \vu_k}\\
    &=  L_k\frac{\lambda_k}{1-\lambda_k}\innp{F(\vu_k), \vu_0 - \vu^*} +  L_k\frac{\lambda_k}{1-\lambda_k}\innp{F(\vu_k), \vu^* - \vu_k}\\
    &\leq   L_k\frac{\lambda_k}{1-\lambda_k}\innp{F(\vu_k), \vu_0 - \vu^*} \leq  L_k\frac{\lambda_k}{1-\lambda_k}\|F(\vu_k)\|\cdot\|\vu_0 - \vu^*\|,
\end{align*}
where the last line is by $\vu^*$ being a solution to~\eqref{eq:MVI} (as it is a solution to~\eqref{eq:monotone-inclusion}) and by the Cauchy-Schwarz inequality. 
The conclusion of the lemma now follows by dividing both sides of $\|F(\vu_k)\|^2 \leq  L_k\frac{\lambda_k}{1-\lambda_k}\|F(\vu_k)\|\cdot\|\vu_0 - \vu^*\|$ by $\|F(\vu_k)\|.$ 
\end{proof}
\fi
%%%%%%%%%%%%%%%

Using Lemma~\ref{lemma:c-k-non-inc}, our goal is now to show that we can choose $L_k = O(L)$ and $\lambda_k = O(\frac{1}{k}),$ which in turn would imply the desired $1/k$ convergence rate: $\|F(\vu_k)\| = O(\frac{L\|\vu_0 - \vu^*\|}{k}).$ 
The following lemma provides sufficient conditions for $\{A_i\}_{i \geq 1},$ $\{\lambda_i\}_{i\geq 1}$, and $\{ L_i\}_{i\geq 1}$ to ensure that $A_{k+1}\cc_{k+1} \leq A_k\cc_k,$ $\forall k \geq 1,$ so that Lemma~\ref{lemma:c-k-non-inc} applies. 
\begin{lemma}\label{lemma:pot-dec}
    Let $\cc_k$ be defined as in Eq.~\eqref{eq:potential}. Let $\{A_i\}_{i \geq 1}$ be defined recursively as $A_1 = 1$ and $A_{k+1} = A_k \frac{\lambda_k}{(1-\lambda_k)\lambda_{k+1}}$ for $k \geq 1.$ Assume that $\{\lambda_i\}_{i\geq 1}$ is chosen so that $\lambda_1 = \frac{1}{2}$ and for $k \geq 1:$ $\frac{\lambda_{k+1}}{1-2\lambda_{k+1}} \geq  \frac{\lambda_k L_k}{(1-\lambda_k)L_{k+1}}$. Finally, assume that $L_k \in (0, \infty)$ and $\innp{F(\vu_k) - F(\vu_{k-1}), \vu_k - \vu_{k-1}}\geq \frac{1}{L_k}\|F(\vu_k) - F(\vu_{k-1})\|^2$, $\forall k.$ Then, 
    $$(\forall k \geq 1): \quad A_{k+1}\cc_{k+1} \leq A_k\cc_k.$$ 
\end{lemma}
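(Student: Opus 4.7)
The plan is to expand $A_{k+1}\cc_{k+1}-A_k\cc_k$, use the Halpern recursion to replace $\vu_0-\vu_{k+1}$ and $\vu_{k+1}-\vu_k$ by expressions in $\vu_0-\vu_k$ and $F(\vu_k)$, invoke the cocoercivity-type hypothesis to bound a single inner product, and then watch the algebra: with the precise recursion for $A_k$, every term involving $\|F(\vu_{k+1})\|^2$ and every cross term $\innp{F(\vu_{k+1}),F(\vu_k)}$ cancels, leaving only a scalar multiple of $\|F(\vu_k)\|^2$ whose sign is controlled by the assumed inequality on $\lambda_k,\lambda_{k+1},L_k,L_{k+1}$.

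More concretely, from~\eqref{eq:halpern} one reads off
\[
\vu_0-\vu_{k+1}=(1-\lambda_{k+1})(\vu_0-\vu_k)+\tfrac{2(1-\lambda_{k+1})}{L_{k+1}}F(\vu_k),\qquad \vu_{k+1}-\vu_k=\lambda_{k+1}(\vu_0-\vu_k)-\tfrac{2(1-\lambda_{k+1})}{L_{k+1}}F(\vu_k).
\]
Substituting the first identity into the $\innp{F(\vu_{k+1}),\vu_0-\vu_{k+1}}$ term of $A_{k+1}\cc_{k+1}$ and using $A_{k+1}\frac{\lambda_{k+1}}{1-\lambda_{k+1}}=\frac{A_k\lambda_k}{(1-\lambda_k)(1-\lambda_{k+1})}$ turns the difference into
\[
A_{k+1}\cc_{k+1}-A_k\cc_k=\tfrac{A_{k+1}}{L_{k+1}}\|F(\vu_{k+1})\|^2-\tfrac{A_k}{L_k}\|F(\vu_k)\|^2-\tfrac{A_k\lambda_k}{1-\lambda_k}\innp{F(\vu_{k+1})-F(\vu_k),\vu_0-\vu_k}-\tfrac{2A_k\lambda_k}{(1-\lambda_k)L_{k+1}}\innp{F(\vu_{k+1}),F(\vu_k)}.
\]
From the second identity I solve $\vu_0-\vu_k=\lambda_{k+1}^{-1}(\vu_{k+1}-\vu_k)+\frac{2(1-\lambda_{k+1})}{\lambda_{k+1}L_{k+1}}F(\vu_k)$ and plug it into the third term. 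The inner product $\innp{F(\vu_{k+1})-F(\vu_k),\vu_{k+1}-\vu_k}$ that appears is bounded below by $\frac{1}{L_{k+1}}\|F(\vu_{k+1})-F(\vu_k)\|^2$ by the cocoercivity-type assumption; since its coefficient is negative, this yields an upper bound on $A_{k+1}\cc_{k+1}-A_k\cc_k$.

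Now comes the bookkeeping step I expect to be the only real obstacle. Using $\frac{A_k\lambda_k}{(1-\lambda_k)\lambda_{k+1}}=A_{k+1}$ and expanding $\|F(\vu_{k+1})-F(\vu_k)\|^2=\|F(\vu_{k+1})\|^2-2\innp{F(\vu_{k+1}),F(\vu_k)}+\|F(\vu_k)\|^2$, the coefficient of $\|F(\vu_{k+1})\|^2$ becomes $\tfrac{A_{k+1}}{L_{k+1}}-\tfrac{A_{k+1}}{L_{k+1}}=0$, and the coefficient of $\innp{F(\vu_{k+1}),F(\vu_k)}$ becomes $\tfrac{2A_{k+1}}{L_{k+1}}\bigl(1-(1-\lambda_{k+1})-\lambda_{k+1}\bigr)=0$; these cancellations are exactly what the choice of $A_{k+1}$ was designed to produce. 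What remains is
\[
A_{k+1}\cc_{k+1}-A_k\cc_k\;\leq\;\Bigl(\tfrac{A_{k+1}(1-2\lambda_{k+1})}{L_{k+1}}-\tfrac{A_k}{L_k}\Bigr)\|F(\vu_k)\|^2.
\]

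To finish, I rewrite the bracketed quantity using $A_{k+1}=A_k\frac{\lambda_k}{(1-\lambda_k)\lambda_{k+1}}$: the condition for it to be nonpositive reduces, after clearing denominators, to $\lambda_k L_k(1-2\lambda_{k+1})\leq(1-\lambda_k)\lambda_{k+1}L_{k+1}$, equivalently $\frac{\lambda_k L_k}{(1-\lambda_k)L_{k+1}}\leq\frac{\lambda_{k+1}}{1-2\lambda_{k+1}}$, which is the hypothesis of the lemma. (If $\lambda_{k+1}\geq 1/2$ the coefficient is manifestly nonpositive, so the assumed inequality is only binding when $\lambda_{k+1}<1/2$.) This gives $A_{k+1}\cc_{k+1}\leq A_k\cc_k$, completing the proof.
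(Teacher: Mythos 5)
Your proposal is correct, and the underlying mechanics are essentially the same as the paper's: both proofs rely on the same two rearrangements of the Halpern recursion, the same cocoercivity-type lower bound on $\innp{F(\vu_{k+1})-F(\vu_k),\vu_{k+1}-\vu_k}$, and the same cancellations engineered by the choice $A_{k+1}=A_k\frac{\lambda_k}{(1-\lambda_k)\lambda_{k+1}}$. The difference is purely in the direction the algebra is read: the paper starts from the cocoercivity inequality, substitutes the two identities for $\vu_{k+1}-\vu_k$, and reads off $A_{k+1}\cc_{k+1}\leq A_k\cc_k + (\text{nonpositive term})$ directly, while you expand $A_{k+1}\cc_{k+1}-A_k\cc_k$ first, eliminate $\vu_0-\vu_{k+1}$ and then $\vu_0-\vu_k$, and only then invoke cocoercivity; this is a bit longer but makes the design role of $A_{k+1}$ (forcing the $\|F(\vu_{k+1})\|^2$ and cross-term coefficients to vanish) more explicit. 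One small remark on your closing aside: the hypothesis $\frac{\lambda_{k+1}}{1-2\lambda_{k+1}}\geq\frac{\lambda_k L_k}{(1-\lambda_k)L_{k+1}}$ already forces $\lambda_{k+1}<\frac{1}{2}$ (since the right-hand side is positive), so the case $\lambda_{k+1}\geq\frac{1}{2}$ you treat is vacuous rather than a separate branch; the observation does no harm, but it is not needed.
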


Observe first the following. If we knew $L$ and set $L_k = L,$ $\lambda_k = \frac{1}{k+1},$ and $A_k = k(k+1)/2,$ then all of the conditions from Lemma~\ref{lemma:pot-dec} would be satisfied, and Lemma~\ref{lemma:c-k-non-inc} would then imply $\|F(\vu_k)\| \leq \frac{L\|\vu_0 - \vu^*\|}{k},$ which recovers the result of~\cite{lieder2019convergence}. The choice $\lambda_k = \frac{1}{k+1}$ is also the tightest possible that satisfies the conditions  Lemma~\ref{lemma:pot-dec} -- the inequality relating $\lambda_{k+1}$ and $\lambda_k$ is satisfied with equality. This result is in line with the numerical observations made by~\cite{lieder2019convergence}, who observed that the convergence of Halpern iteration is  fastest for $\lambda_k = \frac{1}{k+1}$. 

To construct a parameter-free method, we use that $F$ is $L$-cocoercive; namely, that there exists a constant $L < \infty$ such that $F$ satisfies Eq.~\eqref{eq:cocoercive-op} with $\gamma = 1/L$. The idea is to start to with a ``guess'' of $L$ (e.g., $L_0 = 1$) and double the guess $L_k$ as long as $\innp{F(\vu_k) - F(\vu_{k-1}), \vu_k - \vu_{k-1}} < \frac{1}{L_k}\|F(\vu_k) - F(\vu_{k-1})\|^2.$ The total number of times that the guess can be doubled is bounded above by $\max\{0, \log_2(2L/L_0)\}.$ Parameter $\lambda_k$ is simply chosen to satisfy the condition from Lemma~\ref{lemma:pot-dec}. The algorithm pseudocode is stated in Algorithm~\ref{algo:halpern-basic} for a given accuracy specified at the input. %Clearly, the algorithm can be modified in a straightforward manner to be instead run for a fixed number of iterations.  
\begin{algorithm}
\caption{Parameter-Free Halpern -- Cocoercive Case}
\label{algo:halpern-basic}
%\RestyleAlgo{boxed}
%\SetAlgoLined
%\begin{algorithmic}
%\State 
\textbf{Input:} $L_0 > 0$, $\epsilon > 0$, $\vu_0$. If not provided at the input, set $L_0 = 1.$\;

%\State 
$\lambda_1 = \frac{1}{2}$, $k = 0$\;

\While{$\|F(\vu_k)\| > \epsilon$}
{
%\State 
$k = k+1$\;

%\State 
$ L_k = L_{k-1}$\;

%\State 
$p_k = \frac{L_{k-1}}{L_k}\frac{\lambda_{k-1}}{1-\lambda_{k-1}}\,$, $\lambda_k = \frac{p_k}{1 + 2p_k}$\;

%\State 
$\vu_k = \lambda_k \vu_0 + (1-\lambda_k)(\vu_{k-1} - 2 F(\vu_{k-1})/ L_k)$\;

\While{$\innp{F(\vu_k) - F(\vu_{k-1}), \vu_k - \vu_{k-1}} < \frac{1}{L_k}\|F(\vu_k) - F(\vu_{k-1})\|^2$}
{
%\State 
$L_k = 2\cdot L_k$\;

%\State 
$p_k = \frac{L_{k-1}}{L_k}\frac{\lambda_{k-1}}{1-\lambda_{k-1}}\,$, $\lambda_k = \frac{p_k}{1 + 2p_k}$\;

%\State 
$\vu_k = \lambda_k \vu_0 + (1-\lambda_k)(\vu_{k-1} - 2 F(\vu_{k-1})/ L_k)$\;
}%\EndWhile
}%\EndWhile
%\State
\Return 
$\vu_k$
%\end{algorithmic}
\end{algorithm}

We now prove the first of our main results. Note that the total number of arithmetic operations in  Algorithm~\ref{algo:halpern-basic} is of the order of the number of oracle queries to $F$ multiplied by the complexity of evaluating $F$ at a point. The same will be true for all the algorithms stated in this paper, except that the complexity of evaluating $F$ may be replaced by the complexity of  projections onto $\cu$.   
\begin{theorem}\label{thm:halpern-cocoercive}
Given $\vu_0\in \cu$ and an operator $F$ that is $\frac{1}{L}$-cocoercive on $E,$ Algorithm~\ref{algo:halpern-basic} returns a point $\vu_k$ such that $\|F(\vu_k)\|\leq \epsilon$ after at most $\frac{\max\{2L, L_0\}\|\vu_0 - \vu^*\|}{\epsilon} + \max\{0, \log_2(2L/L_0)\}$ oracle queries to $F$.
\end{theorem}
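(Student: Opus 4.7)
The plan is to combine Lemma~\ref{lemma:c-k-non-inc} and Lemma~\ref{lemma:pot-dec} with a careful accounting of how the adaptive estimate $L_k$ grows, and then to convert the guarantee of Lemma~\ref{lemma:c-k-non-inc} into a per-iteration rate via a clean telescoping identity. The argument splits naturally into three pieces: (i) bounding $L_k$ and the total number of doublings, (ii) verifying the hypotheses of Lemma~\ref{lemma:pot-dec}, and (iii) solving a simple recursion in $q_k \defeq L_k\tfrac{\lambda_k}{1-\lambda_k}$.

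First I would control $L_k$. Because $F$ is $\tfrac{1}{L}$-cocoercive, the inner-loop test $\innp{F(\vu_k)-F(\vu_{k-1}),\,\vu_k-\vu_{k-1}} \geq \tfrac{1}{L_k}\|F(\vu_k)-F(\vu_{k-1})\|^2$ must succeed as soon as $L_k\geq L$. Each failure doubles $L_k$, and $L_k$ is non-decreasing across outer iterations (it is initialized from $L_{k-1}$), so an easy induction gives $L_k \leq \max\{2L,L_0\}$ uniformly in $k$. The total number of doublings across the whole run is therefore at most $\log_2(\max\{2L,L_0\}/L_0) = \max\{0,\log_2(2L/L_0)\}$. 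Each doubling costs exactly one extra oracle query to $F$, which accounts for the additive logarithmic term in the claimed complexity.

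Next I would verify the hypotheses of Lemma~\ref{lemma:pot-dec}. By construction $\lambda_k = p_k/(1+2p_k)$ with $p_k = \tfrac{L_{k-1}}{L_k}\tfrac{\lambda_{k-1}}{1-\lambda_{k-1}}$, so a short calculation gives $\tfrac{\lambda_k}{1-2\lambda_k}=p_k$, meaning the step-size inequality $\tfrac{\lambda_k}{1-2\lambda_k} \geq \tfrac{\lambda_{k-1}L_{k-1}}{(1-\lambda_{k-1})L_k}$ holds with equality. The inner loop enforces the remaining cocoercivity-like condition on $L_k$, and $\lambda_1 = 1/2$ is fixed at the outset. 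Lemma~\ref{lemma:pot-dec} then yields $A_{k+1}\cc_{k+1}\leq A_k\cc_k$, and Lemma~\ref{lemma:c-k-non-inc} delivers $\|F(\vu_k)\|\leq q_k\|\vu_0-\vu^*\|$.

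The final step is to bound $q_k$. From $\lambda_k = p_k/(1+2p_k)$ one gets $\tfrac{\lambda_k}{1-\lambda_k}=\tfrac{p_k}{1+p_k}$, which together with $p_k = q_{k-1}/L_k$ collapses to the telescoping identity
\[
\frac{1}{q_k} \;=\; \frac{1}{q_{k-1}} + \frac{1}{L_k}, \qquad q_1 = L_1.
\]
Applying the uniform bound $L_j\leq \max\{2L,L_0\}$ gives $1/q_k \geq k/\max\{2L,L_0\}$, hence $\|F(\vu_k)\|\leq \tfrac{\max\{2L,L_0\}\|\vu_0-\vu^*\|}{k}$, and $k \geq \tfrac{\max\{2L,L_0\}\|\vu_0-\vu^*\|}{\epsilon}$ suffices to trigger the stopping criterion. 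Adding the $\max\{0,\log_2(2L/L_0)\}$ doubling overhead to this iteration count yields the claimed total oracle complexity. The main obstacle is that $L_k$ is data-dependent and may change within an outer iteration; what makes the argument go through cleanly is precisely the telescoping identity above, which renders the cumulative effect of all the $L_j$'s \emph{additive} in $1/q_k$, together with the observation that the algorithmic rule for $\lambda_k$ saturates the step-size condition of Lemma~\ref{lemma:pot-dec}.
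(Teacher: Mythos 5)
Your proposal is correct and follows the same overall route as the paper: control the adaptive estimate $L_k$ and the number of doublings, verify the hypotheses of Lemmas~\ref{lemma:c-k-non-inc} and~\ref{lemma:pot-dec}, and then show $L_k\tfrac{\lambda_k}{1-\lambda_k}$ decays like $1/k$. The one place you diverge is the final step: the paper works with $\lambda_k$ alone, establishing the chain $\lambda_{k+1} \le \tfrac{\lambda_k}{1+\lambda_k} \le \tfrac{\lambda_{k-1}}{1+2\lambda_{k-1}} \le \cdots \le \tfrac{1}{k+2}$ (using only that the $L_j$ are non-decreasing) and then multiplies by the uniform bound $L_k \le \max\{2L, L_0\}$ at the very end. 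You instead track the product $q_k = L_k\tfrac{\lambda_k}{1-\lambda_k}$ directly and observe that the algorithm's update rule for $\lambda_k$ saturates the step-size inequality, which makes $1/q_k$ telescope exactly:
\[
\frac{1}{q_k} = \frac{1}{q_{k-1}} + \frac{1}{L_k} = \sum_{j=1}^k \frac{1}{L_j}.
\]
This is a genuinely cleaner algebraic organization: it yields the exact identity $q_k = \bigl(\sum_{j=1}^k 1/L_j\bigr)^{-1}$ (the scaled harmonic mean of the $L_j$'s), which is strictly sharper than the paper's $q_k \le \max_j L_j/k$ whenever the $L_j$ vary, though both collapse to the same stated bound once you apply $L_j \le \max\{2L, L_0\}$. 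Your intermediate claims all check out: the inner-loop test must pass once $L_k \ge L$ by cocoercivity, the doubling count and the $L_k \le \max\{2L, L_0\}$ bound follow, the algorithm's rule for $\lambda_k$ does satisfy the hypothesis of Lemma~\ref{lemma:pot-dec} with equality, and $q_1 = L_1$ since $\lambda_1 = 1/2$.
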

\begin{proof}
As $F$ is $\frac{1}{L}$-cocoercive, $L_k \leq \max\{2L, L_0\}$ and the total number of times that the algorithm enters the inner while loop is at most $\max\{0, \log_2(2L/L_0)\}.$ The parameters satisfy the assumptions of Lemmas~\ref{lemma:c-k-non-inc} and~\ref{lemma:pot-dec}, and, thus, $\|F(\vu_k)\|\leq L_k \frac{\lambda_k}{1-\lambda_k}\|\vu_0 - \vu^*\|.$ Hence, we only need to show that $\lambda_k$ decreases sufficiently fast with $k.$ 
As $L_k$ can only be increased in any iteration, we have that 
\begin{align*}{\lambda_{k+1}} \leq \frac{\frac{\lambda_k}{1-\lambda_k}}{1+ 2\frac{\lambda_k}{1-\lambda_k}} = \frac{\lambda_k}{1+\lambda_k} \leq \frac{\lambda_{k-1}}{1 + 2\lambda_{k-1}} \leq \dots \leq  \frac{\lambda_1}{1+k\lambda_1} = \frac{1}{k + 2}.
\end{align*}
Hence, the total number of outer iterations is at most $\frac{\max\{2L, L_0\}\|\vu_0 - \vu^*\|}{\epsilon}$. Combining with the maximum total number of inner iterations from the beginning of the proof, the result follows.
\end{proof}
\subsection{Constrained Setups with Cocoercive Operators}\label{sec:constrained-case}
Assume now that $\cu \subseteq E.$ We will make use of a counterpart to \emph{gradient mapping}~\cite[Chapter 2]{nesterov2018lectures} that we refer to as the \emph{operator mapping}, defined as:
\begin{equation}\label{eq:op-mapping}
    G_{\eta}(\vu) = \eta\Big(\vu - \Pi_{\cu}\Big(\vu - \frac{1}{\eta}F(\vu)\Big)\Big),
\end{equation}
where $\Pi_{\cu}\big(\vu - \frac{1}{\eta}F(\vu)\big)$ is the projection operator, namely:
\begin{align*}
    \Pi_{\cu}\Big(\vu - \frac{1}{\eta}F(\vu)\Big) = \argmin_{\vv\in \cu}\Big\{\frac{1}{2}\|\vv - \vu + F(\vu)/\eta\|^2\Big\} = \argmin_{\vv\in \cu}\Big\{\innp{F(\vu), \vv} + \frac{\eta}{2}\|\vv - \vu\|^2\Big\}.
\end{align*}
Operator mapping generalizes a cocoercive operator to the constrained case: when $\cu \equiv E,$ $G_\eta \equiv F.$  

It is a well-known fact that the projection operator is firmly-nonexpansive~\cite[Proposition 4.16]{bauschke2011convex}. %Further, for any firmly nonexpansive operator $T,$ $\mathrm{Id} - T$ is also firmly non-expansive, and, moreover, both $T$ and $\mathrm{Id} - T$ are 1-cocoercive (see, e.g.~\cite{bauschke2012firmly} and references therein). These two standard facts
Thus, Fact~\ref{fact:firmly-ne} can be used to show that, if $F$ is $\frac{1}{L}$-cocoercive and $\eta \geq L,$ then $G_\eta$ is $\frac{1}{2\eta}$-cocoercive. This is shown in the following (simple) proposition.
\begin{proposition}\label{prop:op-map-cocoercive}
Let $F$ be an $\frac{1}{L}$-cocoercive operator and let $G_\eta$ be defined as in Eq.~\eqref{eq:mon-op}, where $\eta \geq L.$ Then $G_\eta$ is $\frac{1}{2\eta}$-cocoercive. 
\end{proposition}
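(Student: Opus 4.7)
My plan is to reduce the claim to the equivalence between nonexpansiveness and $\frac{1}{2}$-cocoercivity stated in Fact~\ref{fact:nonexp-cocoerc-equiv}. Define $T_\eta := \Pi_{\cu} \circ (\mathrm{Id} - \frac{1}{\eta}F)$, so that $G_\eta = \eta(\mathrm{Id} - T_\eta)$. Then $G_\eta$ is $\frac{1}{2\eta}$-cocoercive if and only if $\mathrm{Id} - T_\eta$ is $\frac{1}{2}$-cocoercive (just pull the scalar $\eta$ through both sides of Eq.~\eqref{eq:cocoercive-op}), which by Fact~\ref{fact:nonexp-cocoerc-equiv} is equivalent to $T_\eta$ being nonexpansive. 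So the whole proof boils down to verifying that $T_\eta$ is nonexpansive.

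Since $T_\eta$ is the composition of $\Pi_{\cu}$ and $\mathrm{Id} - \frac{1}{\eta}F$, and compositions of nonexpansive maps are nonexpansive, it suffices to check each factor. The projection $\Pi_{\cu}$ is firmly nonexpansive (and hence nonexpansive) by the standard fact cited just before the proposition. For $\mathrm{Id} - \frac{1}{\eta}F$, I would expand
\[
\Big\|\Big(\vu - \tfrac{1}{\eta}F(\vu)\Big) - \Big(\vv - \tfrac{1}{\eta}F(\vv)\Big)\Big\|^2 = \|\vu-\vv\|^2 - \tfrac{2}{\eta}\innp{F(\vu)-F(\vv),\vu-\vv} + \tfrac{1}{\eta^2}\|F(\vu)-F(\vv)\|^2
\]
and invoke $\frac{1}{L}$-cocoercivity of $F$ to bound the middle inner product from below by $\frac{1}{L}\|F(\vu)-F(\vv)\|^2$, yielding a residual of $\frac{1}{\eta}\big(\frac{1}{\eta} - \frac{2}{L}\big)\|F(\vu)-F(\vv)\|^2$. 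The assumption $\eta \geq L$ (even the weaker $\eta \geq L/2$) makes this term nonpositive, so $\mathrm{Id} - \frac{1}{\eta}F$ is nonexpansive.

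Concatenating these two steps shows $T_\eta$ is nonexpansive, and the chain of equivalences from the first paragraph delivers $\frac{1}{2\eta}$-cocoercivity of $G_\eta$. There is no real obstacle here; the only thing one has to be careful about is matching the scaling factor when translating cocoercivity of $\mathrm{Id} - T_\eta$ into cocoercivity of $\eta(\mathrm{Id} - T_\eta)$, since the constant changes by a factor of $\eta$ (not $\eta^2$) because one factor of $\eta$ cancels between the inner product and the squared norm.
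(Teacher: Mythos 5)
Your proof is correct, and it takes a genuinely different route from the paper's. The paper works directly with the 1-cocoercivity of $\mathrm{Id} - \Pi_{\cu}$ (Fact~\ref{fact:firmly-ne}), evaluates it at the shifted points $\vu - \frac{1}{\eta}F(\vu)$ and $\vv - \frac{1}{\eta}F(\vv)$, expands into a four-term inequality involving both $G_\eta$ and $F$, and then closes with cocoercivity of $F$ plus Young's inequality. You instead factor $T_\eta = \Pi_{\cu}\circ(\mathrm{Id}-\tfrac1\eta F)$, verify each factor is nonexpansive, invoke the (standard, though unstated in the paper) fact that a composition of nonexpansive maps is nonexpansive, and then convert back to cocoercivity through Fact~\ref{fact:nonexp-cocoerc-equiv}. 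Your version is more modular and uses strictly less structure: you only need plain nonexpansiveness of the projection, not the firm nonexpansiveness that the paper's route relies on, and as you note your argument actually succeeds under the slightly weaker hypothesis $\eta \geq L/2$. The paper's computation, on the other hand, keeps everything inline and produces the intermediate inequality~\eqref{eq:g-eta-Lip}, which it then reuses elsewhere. One small presentational point: when pulling the scalar $\eta$ out of $G_\eta = \eta(\mathrm{Id}-T_\eta)$, it is worth spelling out the one-line check that a $c$-cocoercive map scaled by $\eta$ becomes $\tfrac{c}{\eta}$-cocoercive, since the cancellation between the linear inner product and the quadratic norm is exactly the place where a careless reader could slip to $\eta^2$; you flagged this, which is good.
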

%
\iffalse
\begin{proof}
As $\mathrm{Id} - \Pi_{\cu}$ is 1-cocoercive, we have, $\forall \vu, \vv \in E$:
\begin{align*}
    &\innp{\Pi_{\cu}\Big(\vu - \frac{1}{\eta}F(\vu)\Big) - \Pi_{\cu}\Big(\vv - \frac{1}{\eta}F(\vv)\Big), \vu - \frac{1}{\eta}F(\vu) - \Big(\vv - \frac{1}{\eta}F(\vv)\Big)}\\
    &\hspace{1in}= \innp{\frac{1}{\eta}(G_\eta(\vv) - G_{\eta}(\vu)) + \vu - \vv, \vu - \vv - \frac{1}{\eta}(F(\vu) - F(\vv))}\\
    & \hspace{1in} \geq \Big\|\frac{1}{\eta}(G_\eta(\vv) - G_{\eta}(\vu)) + \vu - \vv \Big\|^2.
\end{align*}
Hence:
\begin{equation}\label{eq:g-eta-Lip}
\begin{aligned}
    \frac{1}{\eta^2}\|G_\eta(\vv) - G_{\eta}(\vu)\|^2 \leq\;& \frac{1}{\eta}\innp{G_{\eta}(\vu) - G_{\eta}(\vv), \vu - \vv}\\
    &+ \frac{1}{\eta^2}\innp{G_{\eta}(\vu) - G_{\eta}(\vv), F(\vu) - F(\vv)}
    - \frac{1}{\eta}\innp{F(\vu) - F(\vv), \vu - \vv}.
\end{aligned}
\end{equation}
As $\eta \geq L$ and $F$ is $\frac{1}{L}$-cocoercive, $\frac{1}{\eta}\innp{F(\vu) - F(\vv), \vu - \vv} \geq \frac{1}{\eta^2}\|F(\vu) - F(\vv)\|^2.$ It remains to apply Young inequality, which implies $\innp{G_{\eta}(\vu) - G_{\eta}(\vv), F(\vu) - F(\vv)} \leq \frac{1}{2}\|G_\eta(\vu) - G_{\eta}(\vv)\|^2 + \frac{1}{2}\|F(\vu) - F(\vv)\|^2.$
\end{proof}
\fi
%
\iffalse
\begin{remark}
If we could show that $\|G_\eta(\vu) - G_\eta(\vv)\| \leq \|F(\vu) - F(\vv)\|,$ then we would be able to show in fact that $G_\eta$ is $\frac{1}{\eta}$-cocoercive. The author was not able to find or prove such a statement. 
\end{remark}
\fi 

As $G_\eta$ is $\frac{1}{2\eta}$-cocoercive, applying results from the beginning of the section to $G_\eta$, it is now immediate that Algorithm~\ref{algo:halpern-constr} (provided for completeness) produces $\vu_k$ with $\|G_{L_k}(\vu_k)\| \leq \epsilon$ after at most $\frac{\max\{4L, L_0\}\|\vu_0 - \vu^*\|}{\epsilon} + \max\{0, \log_2(4L/L_0)\}$ oracle queries to $F$ (as each computation of $G_\eta$ requires one oracle query to $F$). 
\begin{algorithm}
\caption{Parameter-Free Halpern -- Cocoercive and Constrained Case}
\label{algo:halpern-constr}
%\begin{algorithmic}[1]
%\State 
\textbf{Input:} $L_0 > 0$, $\epsilon > 0$, $\vu_0\in \cu$. If not provided at the input, set $L_0 = 1.$\;

%\State 
$\lambda_1 = \frac{1}{2}$, $k = 0$\;

%\State 
$\vub_0 = \Pi_{\cu}(\vu_0 - F(\vu_0)/L_0)\,$, $\bar{L}_0 = \frac{\|F(\vub_0)- F(\vu_0)\|}{\|\vub_0 - \vu_0\|}$\;

\While{$\|G_{L_k}(\vu_k)\| > \epsilon/(1+ \bar{L}_k/L_k)$}
%\State 
{
\nl$k = k+1$\;

%\State 
\nl$ L_k = L_{k-1}$\;

%\State 
\nl$p_k = \frac{L_{k-1}}{L_k}\frac{\lambda_{k-1}}{1-\lambda_{k-1}}\,$, $\lambda_k = \frac{p_k}{1 + 2p_k}$\;

%\State 
\nl$\vu_k = \lambda_k \vu_0 + (1-\lambda_k)\vub_{k-1}$\;
%(\vu_{k-1} - G_{ L_k}(\vu_{k-1})/ L_k)$

\While{$\innp{G_{L_k}(\vu_k) - G_{L_k}(\vu_{k-1}), \vu_k - \vu_{k-1}} < \frac{1}{2L_k}\|G_{L_k}(\vu_k) - G_{L_k}(\vu_{k-1})\|^2$}
{
%\State 
\nl$L_k = 2\cdot L_k$\;

%\State 
\nl$p_k = \frac{L_{k-1}}{L_k}\frac{\lambda_{k-1}}{1-\lambda_{k-1}}\,$, $\lambda_k = \frac{p_k}{1 + 2p_k}$\;

%\State 
\nl$\vu_k = \lambda_k \vu_0 + (1-\lambda_k)(\vu_{k-1} - G_{L_k}(\vu_{k-1})/ L_k)$\;
}
%\EndWhile

%\State 
\nl$\vub_k = \Pi_{\cu}(\vu_k - F(\vu_k)/L_k),\,$ $\bar{L}_k = \frac{\|F(\vub_k)- F(\vu_k)\|}{\|\vub_k - \vu_k\|}\,$, $L_k = \max\{L_k,\, \bar{L}_k\}$ \label{line:Lk-etak}\;
}
%\EndWhile
%\State
\Return $\vub_k$, $\vu_k$
%\end{algorithmic}
\end{algorithm}
To complete this subsection, it remains to show that $G_\eta$ is a good surrogate for approximating~\eqref{eq:monotone-inclusion} (and \eqref{eq:SVI}). This is indeed the case and it follows as a suitable generalization of Lemma 3 from~\cite{ghadimi2016accelerated}, which is provided here for completeness.
\begin{lemma}\label{lemma:grad-mapping-approx}
Let $G_\eta$ be defined as in Eq.~\eqref{eq:op-mapping}. Denote $\bar{\vu} = \Pi_{\cu}(\vu - F(\vu)/\eta),$ so that $G_\eta(\vu) = \eta(\vu - \bar{\vu}).$ If, for some $\vu \in \cu,$ $\|G_\eta(\vu)\|\leq \epsilon,$ then 
$$
F(\vub) \in - \partial I_{\cu}(\vub) + {\cal B}((1+ L_{\mathrm{loc}}/\eta)\epsilon),$$ 
where %$\mathcal{B}(\vub, r)$ denotes the ball w.r.t.~$\|\cdot\|$ centered at $\vub$ and of radius $r$, and 
$L_{\mathrm{loc}} = \frac{\|F(\vub) - F(\vu)\|}{\|\vub - \vu\|} \leq L$. 
\end{lemma}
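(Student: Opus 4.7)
The plan is to use the first-order optimality condition of the projection defining $\bar{\vu}$, then add and subtract $F(\bar{\vu})$ to pass from $F(\vu)$ to $F(\bar{\vu})$ and absorb the discrepancy into the error ball.

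First, recall that $\bar{\vu} = \Pi_{\cu}(\vu - F(\vu)/\eta)$ is equivalently $\bar{\vu} = \argmin_{\vv \in \cu}\{\innp{F(\vu), \vv} + \frac{\eta}{2}\|\vv - \vu\|^2\}$, so by the standard optimality condition we obtain
$$\zeros \in F(\vu) + \eta(\bar{\vu} - \vu) + \partial I_{\cu}(\bar{\vu}) = F(\vu) - G_\eta(\vu) + \partial I_{\cu}(\bar{\vu}).$$
Equivalently, $-F(\vu) + G_\eta(\vu) \in \partial I_{\cu}(\bar{\vu})$. This is exactly the content that is true with no error when $\vu = \bar{\vu}$; the rest of the proof just quantifies the slack for general $\vu$.

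Next, I would write the algebraic identity
$$-F(\bar{\vu}) \;=\; \bigl(-F(\vu) + G_\eta(\vu)\bigr) + \bigl(F(\vu) - F(\bar{\vu})\bigr) - G_\eta(\vu),$$
and conclude that the first bracket lies in $\partial I_{\cu}(\bar{\vu})$, while the remaining two terms have norm at most $\|F(\vu) - F(\bar{\vu})\| + \|G_\eta(\vu)\|$. Using the local Lipschitz estimate $\|F(\vu) - F(\bar{\vu})\| = L_{\mathrm{loc}} \|\vu - \bar{\vu}\| = (L_{\mathrm{loc}}/\eta)\|G_\eta(\vu)\|$, together with the assumption $\|G_\eta(\vu)\| \leq \epsilon$, the residual norm is at most $(1 + L_{\mathrm{loc}}/\eta)\epsilon$. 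Hence $-F(\bar{\vu}) \in \partial I_{\cu}(\bar{\vu}) + \mathcal{B}((1 + L_{\mathrm{loc}}/\eta)\epsilon)$, which is the claimed inclusion.

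Finally, the bound $L_{\mathrm{loc}} \leq L$ follows because $F$ is $\frac{1}{L}$-cocoercive and therefore $L$-Lipschitz (as noted in the excerpt via Cauchy--Schwarz applied to Eq.~\eqref{eq:cocoercive-op}). The main obstacle is really just bookkeeping: one must be careful to identify which term of the decomposition lies in the subdifferential and which contributes to the error ball, and to use the pointwise slope rather than the global Lipschitz constant in the definition of $L_{\mathrm{loc}}$ so that the bound is tight enough to be useful in Algorithm~\ref{algo:halpern-constr}. No new analytic tools are needed beyond the optimality condition of the proximal step and the triangle inequality.
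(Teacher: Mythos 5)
Your proof is correct and follows essentially the same route as the paper: first-order optimality of the projection gives $\zeros \in F(\vu) - G_\eta(\vu) + \partial I_{\cu}(\vub)$, then adding and subtracting $F(\vub)$ gives the decomposition $-F(\vub)\in \partial I_\cu(\vub) + (F(\vu)-F(\vub)) - G_\eta(\vu)$, and the residual is bounded by $\|G_\eta(\vu)\| + \|F(\vu)-F(\vub)\| \leq (1+L_{\mathrm{loc}}/\eta)\epsilon$. Your explicit algebraic identity is just a slightly more spelled-out version of the paper's one-line "equivalently" step, and the justification $L_{\mathrm{loc}}\leq L$ from Lipschitz continuity matches the paper's implicit usage.
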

\begin{proof}
 As, by definition, $\bar{\vu} = \argmin_{\vv\in \cu}\big\{\innp{F(\vu), \vv} + \frac{\eta}{2}\|\vv - \vu\|^2\big\},$ by first-order optimality of $\vub,$ we have: 
$%\begin{equation}\notag
    \zeros \in F(\vu) + \eta(\vub - \vu) + \partial I_{\cu}(\vub). 
$ %\end{equation}
Equivalently: $-F(\vub) \in F(\vu) - F(\vub) - G_\eta(\vu) + \partial I_\cu(\vub).$ The rest of the proof follows simply by using $\|G_{\eta}\|\leq \epsilon$ and $\|F(\vu) - F(\vub)\| = L_{\mathrm{loc}} \|\vu - \vub\| = \frac{L_{\mathrm{loc}}}{\eta}\|G_{\eta}(\vu)\| \leq \frac{L_{\mathrm{loc}}}{\eta} \epsilon.$
\end{proof}
Lemma~\ref{lemma:grad-mapping-approx} implies that when the operator mapping is small in norm $\|\cdot\|,$ then $\vub = \Pi_{\cu}(\vu - F(\vu)/\eta)$ is an approximate solution to~\eqref{eq:monotone-inclusion} corresponding to $F$ on $\cu.$ %The following corollary of Lemma~\ref{lemma:grad-mapping-approx}  provides additional implications of the statement of the lemma.
%
%\begin{corollary}\label{cor:op-mapping-approx}
%If the assumptions of Lemma~\ref{lemma:grad-mapping-approx} hold, then: 
%%\begin{enumerate}
%%    \item $\exists g_{\vub} \in \partial I_{\cu}(\vub)$ such that $\|F(\vub) + g_{\vub}\|\leq \epsilon(1+ \frac{L_{\mathrm{loc}}}{\eta});$
%    \item $\max_{\vv \in \{\cu \cap {\cal B}(\vub, 1)\}}\innp{F(\vub), \vub - \vv} \leq \epsilon(1+ \frac{L_{\mathrm{loc}}}{\eta});$
%    \item $\vub$ is an $\epsilon D_\cu (1+ \frac{L_{\mathrm{loc}}}{\eta})$-approximate solution to~\eqref{eq:SVI}, where $D_\cu = \max_{\vu, \vv \in \cu}\|\vu - \vv\|$; i.e.,  
%    $$
%    \max_{\vv \in \cu}\innp{F(\vub), \vub - \vv} \leq \epsilon D_\cu (1+ {L_{\mathrm{loc}}}/{\eta}).
%    $$
%\end{enumerate}
%\end{corollary}
%
We can now formally bound the number of oracle queries to $F$ needed to approximate \eqref{eq:monotone-inclusion} and~\eqref{eq:SVI}.
\begin{theorem}\label{thm:halpern-cocoerc-constrained}
Given $\vu_0\in \cu$ and a $\frac{1}{L}$-cocoercive operator $F$,  Algorithm~\ref{algo:halpern-constr} returns $\vub_k \in \cu$ such that
\begin{enumerate}
    \item $\|G_{L_k}(\vub_k)\|\leq \frac{\epsilon}{2}$, $\max_{\vv \in \{\cu \cap {\cal B}_{\vub_k} \}}\innp{F(\vub_k), \vub_k - \vv} \leq \epsilon$ after at most $$\frac{4\max\{4L, L_0\}\|\vu_0 - \vu^*\|}{\epsilon} + 2\max\{0, \log_2(4L/L_0)\}$$ oracle queries to $F;$
    \item $\max_{\vv \in \cu}\innp{F(\vub), \vub - \vv} \leq \epsilon$ after at most  $$\frac{4\max\{4L, L_0\}\|\vu_0 - \vu^*\|D}{\epsilon} + 2\max\{0, \log_2(4L/L_0)\}$$  oracle queries to $F.$
\end{enumerate}
Further, every point $\vu_k$ that Algorithm~\ref{algo:halpern-constr} constructs is from the feasible set: $\vu_k \in \cu,$ $\forall k \geq 0$, and a simple modification to the algorithm takes at most $\frac{\max\{4L, L_0\}\|\vu_0 - \vu^*\|}{\epsilon} + \max\{0, \log_2(4L/L_0)\}$ oracle queries to $F$ to construct a point such that $\|G_{L_k}(\vu_k)\|\leq \epsilon$.
\end{theorem}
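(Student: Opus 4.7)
The plan is to reduce Algorithm~\ref{algo:halpern-constr} to Algorithm~\ref{algo:halpern-basic} applied to the operator mapping $G_{L_k}$. By Proposition~\ref{prop:op-map-cocoercive}, whenever $L_k \geq L$ the operator $G_{L_k}$ is $\frac{1}{2L_k}$-cocoercive, i.e.\ it behaves like a cocoercive operator of effective Lipschitz constant $2L_k$. The inner-loop test $\innp{G_{L_k}(\vu_k) - G_{L_k}(\vu_{k-1}), \vu_k - \vu_{k-1}} \geq \frac{1}{2L_k}\|G_{L_k}(\vu_k) - G_{L_k}(\vu_{k-1})\|^2$ is precisely the certificate required by Lemmas~\ref{lemma:c-k-non-inc} and~\ref{lemma:pot-dec} in this rescaling, and the same amortized doubling argument used in the proof of Theorem~\ref{thm:halpern-cocoercive} bounds the total number of doublings across all iterations by $\max\{0,\log_2(4L/L_0)\}$. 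The update $\lambda_k = p_k/(1+2p_k)$ with $p_k = \frac{L_{k-1}}{L_k}\frac{\lambda_{k-1}}{1-\lambda_{k-1}}$ saturates the recurrence of Lemma~\ref{lemma:pot-dec}, so the potential $\cc_k$ associated with $G_{L_k}$ is non-increasing and Lemma~\ref{lemma:c-k-non-inc} yields $\|G_{L_k}(\vu_k)\| = O\!\left(\max\{4L,L_0\}\|\vu_0-\vu^*\|/k\right)$.

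Feasibility would follow by induction: $\vu_0\in\cu$, and every subsequent $\vu_k$ is a convex combination of $\vu_0$ with either $\vub_{k-1}\in\cu$ (outer update) or $\vu_{k-1}-G_{L_k}(\vu_{k-1})/L_k=\Pi_{\cu}(\vu_{k-1}-F(\vu_{k-1})/L_k)\in\cu$ (inner update), so $\vu_k\in\cu$. To convert the bound on $\|G_{L_k}(\vu_k)\|$ into the SVI-type guarantees of parts~1 and~2, I would invoke Lemma~\ref{lemma:grad-mapping-approx} at termination: the stopping condition together with the enforcement $L_k\geq\bar L_k$ on line~\ref{line:Lk-etak} gives $-F(\vub_k)\in\partial I_{\cu}(\vub_k)+\mathcal{B}(\epsilon)$, and Fact~\ref{fact:approx-mon-incl} then yields the local SVI bound of part~1. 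Part~2 follows immediately by running the algorithm with input accuracy $\epsilon/D$.

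For the auxiliary bound $\|G_{L_k}(\vub_k)\|\leq\epsilon/2$, I would use nonexpansiveness of $\Pi_{\cu}$ to derive
\[
\|G_{L_k}(\vub_k)\| = L_k\|\vub_k - \Pi_{\cu}(\vub_k - F(\vub_k)/L_k)\| \leq (1+\bar L_k/L_k)\,\|G_{L_k}(\vu_k)\| \leq 2\,\|G_{L_k}(\vu_k)\|,
\]
so that running with a stopping threshold tighter by a constant factor suffices. The factor of $4$ in the query count then reflects (i) two $F$-queries per outer iteration -- one at $\vu_k$ and one at $\vub_k$ for the $\bar L_k$ update on line~\ref{line:Lk-etak} -- and (ii) a factor-two tightening of the stopping threshold; similarly, the factor of $2$ in the logarithmic term accounts for re-evaluating $F$ at both endpoints after each doubling of $L_k$. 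The final clause (part~3) is obtained by dropping the line-\ref{line:Lk-etak} update (and with it the second $F$-query per iteration) and replacing the stopping condition by $\|G_{L_k}(\vu_k)\|>\epsilon$; the same potential-based analysis then gives $\|G_{L_k}(\vu_k)\|\leq\epsilon$ within the claimed single-factor budget.

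The main obstacle I expect is the bookkeeping inside the inner loop, where $L_k$ -- and therefore the operator $G_{L_k}$ itself -- can change mid-iteration: one must verify that the potential $\cc_k$ defined with respect to the current $G_{L_k}$ remains properly controlled across doubling events and that the prescribed update of $\lambda_k$ continues to satisfy the recursion required by Lemma~\ref{lemma:pot-dec}. Once this is settled, the feasibility argument and the translation through Lemma~\ref{lemma:grad-mapping-approx} are routine.
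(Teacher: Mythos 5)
Your proposal follows essentially the same route as the paper's proof: establish feasibility of the iterates through the convex-combination/projection structure of the updates, use Proposition~\ref{prop:op-map-cocoercive} to treat $G_{L_k}$ as a $\frac{1}{2L_k}$-cocoercive operator so that Lemmas~\ref{lemma:c-k-non-inc} and~\ref{lemma:pot-dec} and the doubling argument from Theorem~\ref{thm:halpern-cocoercive} carry over, and then translate the resulting bound on $\|G_{L_k}(\vu_k)\|$ through Lemma~\ref{lemma:grad-mapping-approx} and Fact~\ref{fact:approx-mon-incl} into the stated monotone-inclusion and SVI guarantees (part~2 by scaling the target accuracy by $1/D$, the final clause by dropping the Line~\ref{line:Lk-etak} computations). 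Your explicit nonexpansiveness bound $\|G_{L_k}(\vub_k)\|\leq(1+\bar L_k/L_k)\|G_{L_k}(\vu_k)\|$ is a useful computation the paper leaves implicit, though note that combined with the stopping rule it gives $\|G_{L_k}(\vub_k)\|\leq\epsilon$ rather than $\epsilon/2$; this constant-factor mismatch is already present in the theorem as stated. The bookkeeping subtlety you flag at the end --- that in the constrained setting the operator $G_{L_k}$ itself changes when $L_k$ is doubled or increased on Line~\ref{line:Lk-etak}, so the potential values $\cc_k$ and $\cc_{k+1}$ appearing in Lemma~\ref{lemma:pot-dec} are nominally built from different operators --- is a fair and genuine observation, but it is a gap inherited from the paper rather than one you introduced: the paper's proof dispatches the entire potential argument with ``the same reasoning as in the proof of Theorem~\ref{thm:halpern-cocoercive}'' without spelling out why the monotone non-decrease of $L_k$ makes the chained inequality $A_k\cc_k\leq\cc_1\leq 0$ go through.
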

\begin{proof}
 By the definition of $G_\eta,$ if $\vu_0 \in \cu,$ then $\vu_k \in \cu,$ for all $k.$ This follows simply as:
\begin{align*}
    \vu_{k+1} &= \lambda_{k+1}\vu_0 + (1-\lambda_{k+1})\Big(\vu_k - \frac{1}{L_{k+1}}G_{L_{k+1}}(\vu_k)\Big)\\
                & = \lambda_{k+1}\vu_0 + (1-\lambda_{k+1})\Pi_{\cu}(\vu_k - F(\vu_k)/L_{k+1}).
\end{align*}
Observe that, due to Line~\ref{line:Lk-etak} of Algorithm~\ref{algo:halpern-constr}, $L_k \geq \bar{L}_k.$ The rest of the proof follows using Lemma~\ref{lemma:grad-mapping-approx}, Fact~\ref{fact:approx-mon-incl}, and the same reasoning as in the proof of Theorem~\ref{thm:halpern-cocoercive}. Observe that if the goal is to only output a point $\vu_k$ such that $\|G_{L_k}(\vu_k)\| \leq \epsilon$, then computing $\vub_k$ and $F(\vub_k)$ is not needed, and the algorithm can instead use $\|G_{L_k}(\vu_k)\| > \epsilon$ as the exit condition in the outer while loop. 
\end{proof}
%
%%%%%%%%%%%%
\subsection{Setups with non-Cocoercive Lipschitz Operators}\label{sec:mon-Lip}
We now consider the case in which $F$ is not cocoercive, but only monotone and $L$-Lipschitz. To obtain the desired convergence result, we  make use of the resolvent operator, defined as $J_{F+\partial I_\cu} = (\mathrm{Id} + F + \partial I_\cu)^{-1}.$ A useful property of the resolvent is that it is firmly-nonexpansive~\cite[and references therein]{ryu2016primer}, which, due to Fact~\ref{fact:firmly-ne}, implies that $P = \mathrm{Id} - J_{F+ \partial I_\cu}$ is $\frac{1}{2}$-cocoercive. 

Finding a point $\vu \in \cu$ such that $\|P(\vu)\|\leq \epsilon$ is sufficient for approximating monotone inclusion (and~\eqref{eq:SVI}). This is shown in the following simple proposition, provided here for completeness.
\begin{proposition}\label{prop:P-as-an-approx}
Let $P = \mathrm{Id} - J_{F+ \partial I_\cu}$. If $\|P(\vu)\|\leq \epsilon$, then $\vub = \vu - P(\vu) = J_{F+ \partial I_\cu}(\vu)$ satisfies 
$$F(\vub) \in - \partial I_\cu(\vub) + \mathcal{B}(\epsilon).$$
\end{proposition}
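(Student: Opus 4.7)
The proof is essentially an unpacking of the definition of the resolvent $J_{F+\partial I_\cu} = (\mathrm{Id} + F + \partial I_\cu)^{-1}$. The plan is to write out what it means for $\vub$ to be the resolvent applied to $\vu$, substitute the identity $\vu - \vub = P(\vu)$, and then use the hypothesis $\|P(\vu)\| \le \epsilon$ to fold the residual $P(\vu)$ into the $\epsilon$-ball term.

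In more detail, first I would note that $\vub = J_{F+\partial I_\cu}(\vu)$ is equivalent to $\vu \in (\mathrm{Id} + F + \partial I_\cu)(\vub)$, i.e.,
\begin{equation*}
    \vu - \vub - F(\vub) \in \partial I_\cu(\vub).
\end{equation*}
Since by the definition of $P$ we have $\vu - \vub = P(\vu)$, this rearranges to
\begin{equation*}
    -F(\vub) \in \partial I_\cu(\vub) - P(\vu).
\end{equation*}
The hypothesis $\|P(\vu)\| \le \epsilon$ means $-P(\vu) \in \mathcal{B}(\epsilon)$, so
\begin{equation*}
    -F(\vub) \in \partial I_\cu(\vub) + \mathcal{B}(\epsilon),
\end{equation*}
and since $\mathcal{B}(\epsilon)$ is symmetric about the origin, flipping signs yields the stated inclusion $F(\vub) \in -\partial I_\cu(\vub) + \mathcal{B}(\epsilon)$. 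There is no real obstacle here; the only point worth double-checking is that $\vub \in \cu$ so that $\partial I_\cu(\vub)$ is well-defined and nonempty, which follows automatically because $J_{F+\partial I_\cu}(\vu)$ lies in $\mathrm{dom}(\partial I_\cu) = \cu$ by construction of the resolvent.
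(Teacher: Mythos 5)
Your proof is correct and follows essentially the same route as the paper: unpack the definition of the resolvent to get $\vu - \vub - F(\vub) \in \partial I_\cu(\vub)$, substitute $\vu - \vub = P(\vu)$, and absorb the $P(\vu)$ term into $\mathcal{B}(\epsilon)$ using the hypothesis. You spell out the sign-flip and the fact that $\vub \in \cu$ slightly more explicitly, but the argument is the same.
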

\begin{proof}
 By the definition of $P$ and $J_{F+ \partial I_\cu}$, $\vu - P(\vu) = (\mathrm{Id} + F + \partial I_\cu)^{-1}(\vu).$ Equivalently:
 $$
    \vu - P(\vu) + F(\vu - P(\vu)) + \partial I_\cu(\vu - P(\vu)) \ni \vu.
 $$
 As $\|P(\vu)\|\leq \epsilon,$ the result follows.
\end{proof}

If we could compute the resolvent exactly, it would suffice to directly apply the result of~\cite{lieder2019convergence}. However, excluding very special cases, computing the exact resolvent efficiently is generally not possible. However, since $F$ is Lipschitz, the resolvent $J_{F+ \partial I_\cu}$ can be \emph{approximated} efficiently. This is because it corresponds to solving a VI defined on a closed convex set $\cu$ with the operator $F + \mathrm{Id}$ that is $1$-strongly monotone and $(L+1)$-Lipschitz. Thus, it can be computed by solving a strongly monotone and Lipschitz VI, for which one can use the results of  e.g.,~\cite{nesterov2011solving,mokhtari2019unified,gidel2018variational} if $L$ is known, or~\cite{stonyakin2018generalized}, if $L$ is not known. For completeness, we provide a simple  modification to the Extragradient algorithm of~\cite{korpelevich1977extragradient} in Algorithm~\ref{algo:eg+}  (Appendix~\ref{appx:omitted-proofs}), for which we prove that it attains the optimal convergence rate without the knowledge of $L$. The convergence result is summarized in the following lemma, whose proof is provided in Appendix~\ref{appx:omitted-proofs}. %This is summarized in the following lemma, for completeness.
\begin{lemma}
\label{lemma:approx-resolvent}
Let $\vub_k^* = J_{F+I_{\cu}}(\vu_k),$  %
where $\vu_k\in \cu$ and $F$ is $L$-Lipschitz. Then, there exists a parameter-free algorithm that queries $F$ at most $O((L+1)\log(\frac{L\|\vu_k - \vub_k^*\|}{\epsilon}))$ times and outputs a point $\vub_k$ such that $\|\vub_k - \vub^*_k\|\leq \epsilon.$ 
\end{lemma}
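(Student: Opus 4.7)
The plan is to reformulate the computation of the resolvent as solving a strongly monotone, Lipschitz VI, and then to invoke an adaptive Extragradient scheme. By the definition of $J_{F+I_\cu}$, the point $\vub_k^*$ is the unique element of $\cu$ satisfying $\vu_k - \vub_k^* - F(\vub_k^*) \in \partial I_\cu(\vub_k^*)$, which is exactly the Stampacchia VI on $\cu$ for the operator
\[
\tilde F(\vv) \defeq F(\vv) + \vv - \vu_k.
\]
Monotonicity of $F$ makes $\tilde F$ be $1$-strongly monotone, and $L$-Lipschitzness of $F$ makes $\tilde F$ be $(L+1)$-Lipschitz; computing $\tilde F$ at any point costs one query to $F$.

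First, I would apply the parameter-free Extragradient variant of Algorithm~\ref{algo:eg+} to $\tilde F$ initialized at $\vv_0 = \vu_k$. Each outer iteration performs a prediction $\vv_t^{1/2} = \Pi_\cu(\vv_t - \eta_t \tilde F(\vv_t))$ and a correction $\vv_{t+1} = \Pi_\cu(\vv_t - \eta_t \tilde F(\vv_t^{1/2}))$, where $\eta_t$ is selected by a backtracking test requiring
\[
\eta_t \|\tilde F(\vv_t) - \tilde F(\vv_t^{1/2})\| \leq \|\vv_t - \vv_t^{1/2}\|.
\]
Starting from an initial guess $\bar L_0$ and doubling it when the test fails, the standard ``geometric-series'' amortization shows that the total number of failed tests across the entire run is $O(\log(L+1))$ and every accepted iteration uses an estimate $\bar L_t \in [\,1,\, 2(L+1)\,]$, so $\eta_t = \Theta(1/(L+1))$ on accepted iterations.

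Second, on an accepted iteration one combines the backtracking certificate with $1$-strong monotonicity of $\tilde F$ in the usual Extragradient analysis to obtain a linear contraction
\[
\|\vv_{t+1} - \vub_k^*\|^2 \leq \Big(1 - \frac{c}{L+1}\Big)\|\vv_t - \vub_k^*\|^2
\]
for an absolute constant $c>0$. Iterating from $\vv_0 = \vu_k$, after $T = O\bigl((L+1)\log(\|\vu_k - \vub_k^*\|/\epsilon)\bigr)$ outer iterations one has $\|\vv_T - \vub_k^*\| \leq \epsilon$, and we return $\vub_k = \vv_T$. Each outer iteration costs two queries to $F$, and adding the $O(\log(L+1))$ backtracking overhead gives the claimed total of $O\bigl((L+1)\log(L\|\vu_k - \vub_k^*\|/\epsilon)\bigr)$ queries.

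The main obstacle is handling the interplay between backtracking and linear contraction in a parameter-free manner: one must ensure the backtracking criterion is precisely the inequality that the strongly-monotone contraction argument consumes, so that \emph{every} accepted iteration genuinely contracts by $(1-c/(L+1))$ regardless of the current estimate $\bar L_t$, and one must separately amortize the failed tests so that they contribute only a single additive $\log(L+1)$ term rather than a multiplicative overhead on each iteration.
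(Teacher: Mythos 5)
Your proposal is correct and takes essentially the same approach as the paper: reformulate the resolvent computation as a Stampacchia VI for the $1$-strongly monotone, $(L+1)$-Lipschitz operator $\tilde F(\vv) = F(\vv) + \vv - \vu_k$, and then invoke a parameter-free (backtracking/doubling) Extragradient method that converges linearly at rate $(1 - \Theta(1/(L+1)))$, yielding the stated query complexity. The only cosmetic difference is the precise form of the backtracking test and the prox step (the paper's Algorithm~\ref{algo:eg+} folds the strong-monotonicity term into the prox update and uses an inner-product criterion rather than your Lipschitz-style norm criterion), but this does not change the substance of the argument.
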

%
%An even more direct approach is to use Theorem~\ref{thm:restarting} from Section~\ref{sec:strongly-mon} applied to $F+\mathrm{Id}.$

%For completeness and to show that the result can be obtained when $L$ is not known and $\cu \equiv E,$ we state the following lemma, whose proof is provided in Appendix~\ref{appx:approx-resolvent}.
%
%\begin{lemma}\label{lemma:approx-resolvent}
%Let $F$ be an $L$-Lipschitz monotone operator. Then an $\epsilon$-approximation to its resolvent on $\cu$ at point $\vu$, $\vub_k = \Tilde{J}_{F+\partial I_\cu}(\vu)$ such that $\|\vub_k - {J}_{F+\partial I_\cu}(\vu)\| \leq \epsilon$ can be computed in ?? iterations, each requiring two oracle queries to $F.$ 
%\end{lemma}
%
To obtain the desired result, we need to prove the convergence of a Halpern iteration with inexact evaluations of the cocoercive operator $P$. Note that here we do know the cocoercivity parameter of $P$ -- it is equal to $1/2$. %However, we provide a parameter-free version of the method with inexact operator evaluations, as it may be of independent interest.
%
%%%%%%%%%%%
%\subsubsection{Inexact Cocoercive Operators}
%
The resulting inexact version of Halpern's iteration for $P$ is:
\begin{equation}\label{eq:inexact-halpern}%\tag{IH}
\begin{aligned}
    \vu_{k+1} &= \lambda_{k+1}\vu_0 + (1-\lambda_{k+1})(\vu_k - \Tilde{P}(\vu_k))\\
    & = \lambda_{k+1}\vu_0 + (1-\lambda_{k+1})\Tilde{J}_{F+\partial I_{\cu}}(\vu_k),
\end{aligned}
\end{equation}
where $\Tilde{P}(\vu_k) - P(\vu_k)  =  {J}_{F+\partial I_{\cu}}(\vu_k) - \Tilde{J}_{F+\partial I_{\cu}}(\vu_k) = \ve_k$ is the error. 

To analyze the convergence of~\eqref{eq:inexact-halpern}, we again use the potential function $\cc_k$ from Eq.~\eqref{eq:potential}, with $P$ as the operator.  For simplicity of exposition, we take the best choice of $\lambda_i = \frac{1}{i+1}$ that can be obtained from Lemma~\ref{lemma:c-k-non-inc} for $ L_i = L = 2,$ $\forall i.$ 
The key result for this setting is provided in the following lemma, whose proof is deferred to the appendix. 
\begin{lemma}\label{lemma:inexact-halpern}
 Let $\cc_k$ be defined as in Eq.~\eqref{eq:potential} with $P$ as the $\frac{1}{2}$-cocoercive operator, and let $L_k = 2,$ $\lambda_k = \frac{1}{k+1},$ and $A_k = \frac{k(k+1)}{2},$ $\forall k \geq 1$. If the iterates $\vu_k$ evolve according to~\eqref{eq:inexact-halpern} for an arbitrary initial point $\vu_0 \in \cu,$ then:  
    $$(\forall k \geq 1): \quad A_{k+1}\cc_{k+1} \leq A_k\cc_k + A_{k+1}\innp{\ve_k, (1-\lambda_{k+1})P(\vu_k) - P(\vu_{k+1})}.$$ 
Further, if, $\forall k \geq 1,$ $\|\ve_{k-1}\|\leq \frac{\epsilon}{4 k(k+1)},$ then $\|P(\vu_K)\|\leq \epsilon$ after at most $K = \frac{4\|\vu_0 - \vu^*\|}{\epsilon}$ iterations.
\end{lemma}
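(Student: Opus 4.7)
The plan is to mimic the proof of Lemma~\ref{lemma:pot-dec}, tracking the inexactness $\tilde P(\vu_k) = P(\vu_k) + \ve_k$. First I rewrite the inexact iteration in two equivalent ways,
\[
\vu_{k+1} - \vu_k = \lambda_{k+1}(\vu_0 - \vu_k) - (1-\lambda_{k+1})(P(\vu_k)+\ve_k) = \tfrac{\lambda_{k+1}}{1-\lambda_{k+1}}(\vu_0 - \vu_{k+1}) - P(\vu_k) - \ve_k,
\]
and substitute these into the expanded $\tfrac{1}{2}$-cocoercivity inequality for $P$, namely $\tfrac{1}{2}\|P(\vu_{k+1})\|^2 \leq \innp{P(\vu_{k+1}), \vu_{k+1}-\vu_k+P(\vu_k)} - \innp{P(\vu_k), \vu_{k+1}-\vu_k+\tfrac{1}{2}P(\vu_k)}$. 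The values $\lambda_k = \tfrac{1}{k+1}$ and $A_k = \tfrac{k(k+1)}{2}$ are exactly those that make the recursion in Lemma~\ref{lemma:pot-dec} tight with $L_k=2$, so after multiplying by $A_{k+1}$ the $\ve_k$-free part collapses into $A_{k+1}\cc_{k+1}\leq A_k\cc_k$, while the $\ve_k$-dependent part aggregates cleanly into $A_{k+1}\innp{\ve_k, (1-\lambda_{k+1})P(\vu_k) - P(\vu_{k+1})}$, establishing the first claim.

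For the second claim, I telescope from $k=1$ to $k=K-1$ to obtain
\[
A_K \cc_K \leq A_1 \cc_1 + \sum_{k=1}^{K-1} A_{k+1}\innp{\ve_k, (1-\lambda_{k+1})P(\vu_k)-P(\vu_{k+1})}.
\]
A short direct calculation, applying cocoercivity of $P$ to $(\vu_0,\vu_1)$ with $\vu_1 - \vu_0 = -\tfrac12 \tilde P(\vu_0)$, yields $A_1 \cc_1 \leq \tfrac{1}{2}\innp{\ve_0, P(\vu_0) - 2P(\vu_1)}$, a term of the same structure as the summands. I then generalize the argument of Lemma~\ref{lemma:c-k-non-inc} to allow $A_K\cc_K \leq E$ rather than $\leq 0$: monotonicity of $P$ together with $P(\vu^*)=\zeros$ gives $\innp{P(\vu_K), \vu^*-\vu_K}\leq 0$, so $\innp{P(\vu_K), \vu_0-\vu_K} \leq \innp{P(\vu_K), \vu_0-\vu^*} \leq \|P(\vu_K)\|\cdot\|\vu_0-\vu^*\|$ by Cauchy--Schwarz (and the fact that $\vu^*$ solves \eqref{eq:MVI} for $P$). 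The resulting quadratic inequality in $a:=\|P(\vu_K)\|$ yields $a \leq \tfrac{2\|\vu_0-\vu^*\|}{K} + \sqrt{\tfrac{4E}{K(K+1)}}$.

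To close the argument I bound $E$. The assumption $\|\ve_{k-1}\|\leq \tfrac{\epsilon}{4k(k+1)}$ gives $A_{k+1}\|\ve_k\|\leq \tfrac{\epsilon}{8}$, so each summand, as well as the $A_1 \cc_1$ term, is at most $O(\epsilon(\|P(\vu_k)\|+\|P(\vu_{k+1})\|))$. A separate, purely nonexpansive argument supplies the uniform bound $\|P(\vu_k)\|\leq 2\|\vu_0-\vu^*\|$: since $J_{F+\partial I_\cu} = \mathrm{Id}-P$ is firmly nonexpansive and fixes $\vu^*$, the iteration implies $\|\vu_{k+1}-\vu^*\|\leq \lambda_{k+1}\|\vu_0-\vu^*\| + (1-\lambda_{k+1})(\|\vu_k-\vu^*\|+\|\ve_k\|)$, and induction combined with $\sum_{j\geq 0}\tfrac{1}{(j+1)(j+2)}=1$ yields $\|\vu_k-\vu^*\|\leq \|\vu_0-\vu^*\|+\tfrac{\epsilon}{4}$; firm nonexpansiveness of $P$ with $P(\vu^*)=\zeros$ then gives $\|P(\vu_k)\|\leq \|\vu_k-\vu^*\|$. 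Thus $E = O(\epsilon\, K\,\|\vu_0-\vu^*\|)$, and for $K = 4\|\vu_0-\vu^*\|/\epsilon$ both $\tfrac{2\|\vu_0-\vu^*\|}{K}$ and $\sqrt{4E/(K(K+1))}$ are at most $\epsilon/2$, giving $\|P(\vu_K)\|\leq \epsilon$.

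The main obstacle I anticipate is avoiding circularity: the bound on $\|P(\vu_K)\|$ from the telescoped potential depends on $E$, which in turn depends on a uniform bound on $\|P(\vu_k)\|$ over all iterations. Deriving this uniform bound \emph{independently}, via the Halpern-type nonexpansiveness induction above, is what makes the argument go through. Getting all constants sharp enough so the quadratic inequality delivers exactly $\|P(\vu_K)\|\leq \epsilon$ at $K = 4\|\vu_0-\vu^*\|/\epsilon$ is the other delicate point, and it drives the specific choice of the constant $\tfrac{1}{4k(k+1)}$ in the assumed error bound.
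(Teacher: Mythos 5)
Your approach matches the paper's: the first part is proved exactly as in Lemma~\ref{lemma:pot-dec} with the extra $\ve_k$ term carried through; for the second part you telescope the potential, bound $A_1\cc_1$ (which, as you note, coincides with the $i=1$ summand), control each error term via a separate nonexpansivity-induction bound on $\|\vu_k-\vu^*\|$ (the paper isolates this as Proposition~\ref{prop:bnds-of-the-iterates}), and then extract $\|P(\vu_K)\|$ from a quadratic inequality (the paper phrases this step as a proof by contradiction). The one place to be careful is your last numerical step: after relaxing the quadratic root via $\sqrt{a^2+b}\leq a+\sqrt b$ to $\|P(\vu_K)\|\leq \tfrac{2\|\vu_0-\vu^*\|}{K}+\sqrt{4E/(K(K+1))}$, the claim that \emph{both} pieces are $\leq \epsilon/2$ at $K=4\|\vu_0-\vu^*\|/\epsilon$ does not hold — the first piece is exactly $\epsilon/2$, and with the available bound $E\leq \tfrac{K\epsilon}{4}\bigl(\|\vu_0-\vu^*\|+\epsilon/4\bigr)$ the second works out to roughly $\sqrt{3/8}\,\epsilon\approx 0.61\epsilon$. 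The result still follows, but you need to keep the exact root of the quadratic (which gives roughly $0.91\epsilon<\epsilon$), or argue as the paper does: assume $\|P(\vu_K)\|>\epsilon$, divide through by $\|P(\vu_K)\|$, and derive the contradiction $\|P(\vu_K)\|< \tfrac{3\epsilon}{4}+\tfrac{\epsilon}{4}=\epsilon$ (using also the separate trivial case $\|\vu_0-\vu^*\|\leq\epsilon/2$).
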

We are now ready to state the algorithm and prove the main theorem for this subsection.
\begin{algorithm}
\caption{Parameter-Free Halpern -- Monotone and Lipschitz Case}\label{algo:mon-Lip}
%\begin{algorithmic}[1]
%\Statex 
\textbf{Input:} $\epsilon > 0$, $\vu_0\in \cu$ \;

%\State 
$k = 0$, $\epsilon_0 = \frac{\epsilon}{8}$\;

%\State 
$\vub_0 = \Tilde{J}_{F + \partial I_{\cu}}(\vu_0),$
where $\|\Tilde{J}_{F + \partial I_{\cu}}(\vu_0) - {J}_{F + \partial I_{\cu}}(\vu_0)\| \leq \epsilon_0$\;

%\State 
$\Tilde{P}(\vu_0) = \vu_0 - \vub_0$\;

\While{$\|\Tilde{P}(\vu_k)\|>\frac{3\epsilon}{4}$}
{
%\State 
$k = k+1$, $\lambda_k = \frac{1}{k+1}$, $\epsilon_k = \frac{\epsilon}{8(k+1)(k+2)}$\;

%\State 
$\vu_k = \lambda_k \vu_0 + (1-\lambda_k)\vub_{k-1}$\;

%\State 
$\vub_k = \Tilde{J}_{F + \partial I_{\cu}}(\vu_k),$ where $\|\Tilde{J}_{F + \partial I_{\cu}}(\vu_k) - {J}_{F + \partial I_{\cu}}(\vu_k)\| \leq \epsilon_k$\;

%\State 
$\Tilde{P}(\vu_k) = \vu_k - \vub_k$\;
}
%\EndWhile
%\State
\Return $\vub_k$, $\vu_k$
%\end{algorithmic}
\end{algorithm}
\begin{theorem}\label{thm:mon-op}
Let $F$ be a monotone and $L$-Lipschitz operator and let $\vu_0 \in \cu$ be an arbitrary initial point. For any $\epsilon > 0,$ Algorithm~\ref{algo:mon-Lip} outputs a point with $\|P(\vu_k)\| \leq \epsilon$ after at most $\frac{8\|\vu^* - \vu_0\|}{\epsilon}$ iterations, where each iteration can be implemented with $O((L+1) \log(\frac{(L+1)\|\vu_0 - \vu^*\|}{\epsilon})$ oracle queries to $F.$ Hence, the total number of oracle queries to $F$ is:
$
    O\big(\frac{(L+1)\|\vu_0 - \vu^*\|}{\epsilon}\log\big(\frac{(L+1)\|\vu_0 - \vu^*\|}{\epsilon}\big)\big).
$
\end{theorem}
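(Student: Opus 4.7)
My plan is to deduce the iteration count from Lemma~\ref{lemma:inexact-halpern} applied with an appropriately shrunk target accuracy, and then invoke Lemma~\ref{lemma:approx-resolvent} to bound the per-iteration oracle cost. Concretely, the errors in Algorithm~\ref{algo:mon-Lip} are $\ve_k = \Tilde{P}(\vu_k) - P(\vu_k) = J_{F+\partial I_\cu}(\vu_k) - \Tilde{J}_{F+\partial I_\cu}(\vu_k)$ with $\|\ve_k\| \leq \epsilon_k$. The choice $\epsilon_0 = \epsilon/8$ and $\epsilon_k = \epsilon/(8(k+1)(k+2))$ for $k \geq 1$ satisfies $\|\ve_{k-1}\| \leq (\epsilon/2)/(4k(k+1))$, so the hypothesis of the second part of Lemma~\ref{lemma:inexact-halpern} holds with target accuracy $\epsilon/2$. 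The lemma then yields $\|P(\vu_K)\| \leq \epsilon/2$ within $K \leq 4\|\vu_0 - \vu^*\|/(\epsilon/2) = 8\|\vu_0 - \vu^*\|/\epsilon$ outer iterations. Since $\|\Tilde{P}(\vu_K)\| \leq \|P(\vu_K)\| + \|\ve_K\| \leq \epsilon/2 + \epsilon/8 < 3\epsilon/4$, the outer while loop terminates by iteration $K$; conversely, termination guarantees $\|P(\vu_K)\| \leq \|\Tilde{P}(\vu_K)\| + \|\ve_K\| \leq 3\epsilon/4 + \epsilon/8 < \epsilon$, as required.

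For the per-iteration cost, Lemma~\ref{lemma:approx-resolvent} says that producing $\vub_k$ with $\|\vub_k - J_{F+\partial I_\cu}(\vu_k)\| \leq \epsilon_k$ costs $O\bigl((L+1)\log(L\|\vu_k - \vub_k^*\|/\epsilon_k)\bigr)$ queries to $F$, where $\vub_k^* = J_{F+\partial I_\cu}(\vu_k)$. Since $\vu^*$ is a fixed point of $J_{F+\partial I_\cu}$ and the resolvent is (firmly) nonexpansive, $\|\vub_k^* - \vu^*\| \leq \|\vu_k - \vu^*\|$, so $\|\vu_k - \vub_k^*\| \leq 2\|\vu_k - \vu^*\|$. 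Thus the remaining task is to produce a uniform bound on $\|\vu_k - \vu^*\|$. Using the recursion $\vu_{k+1} = \lambda_{k+1}\vu_0 + (1-\lambda_{k+1})(\vu_k - P(\vu_k) - \ve_k)$ together with nonexpansiveness of $\mathrm{Id} - P = J_{F+\partial I_\cu}$ (and the fact that $\vu^* = J_{F+\partial I_\cu}(\vu^*)$), a short induction gives
\[
\|\vu_{k+1} - \vu^*\| \leq \lambda_{k+1}\|\vu_0 - \vu^*\| + (1-\lambda_{k+1})\|\vu_k - \vu^*\| + \|\ve_k\|,
\]
so $\|\vu_k - \vu^*\| \leq \|\vu_0 - \vu^*\| + \sum_{j \geq 0}\epsilon_j$. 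The series telescopes to a constant multiple of $\epsilon$, so $\|\vu_k - \vub_k^*\| = O(\|\vu_0 - \vu^*\| + \epsilon)$. Substituting $\epsilon_k \geq \epsilon/(8(K+1)(K+2))$ with $K = O(\|\vu_0 - \vu^*\|/\epsilon)$ bounds the logarithm by $O(\log((L+1)\|\vu_0 - \vu^*\|/\epsilon))$, giving the claimed per-iteration complexity. Multiplying by the $O(\|\vu_0 - \vu^*\|/\epsilon)$ iteration count produces the overall query bound.

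The main obstacle I anticipate is the uniform boundedness of $\|\vu_k - \vu^*\|$ under inexact Halpern updates: one must verify that the errors $\ve_k$ neither accumulate to destroy the a~priori bound nor force a dependence on $k$ inside the logarithm of Lemma~\ref{lemma:approx-resolvent}. The schedule $\epsilon_k = \Theta(\epsilon/k^2)$ is exactly what makes $\sum_k \epsilon_k$ convergent, so the boundedness argument closes cleanly; the only subtlety is keeping the constants aligned so that both the exit condition on $\Tilde{P}$ (inflated by $\|\ve_K\|$) and the hypothesis of Lemma~\ref{lemma:inexact-halpern} (which is phrased for $P$, not $\Tilde{P}$) are satisfied simultaneously, which is why the schedule carries an extra factor of $1/2$ relative to the bare statement of Lemma~\ref{lemma:inexact-halpern}.
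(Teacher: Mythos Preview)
Your proposal is correct and follows essentially the same approach as the paper: apply Lemma~\ref{lemma:inexact-halpern} at target accuracy $\epsilon/2$ to bound the outer iteration count, use the triangle inequality to pass between $\|\Tilde P(\vu_k)\|$ and $\|P(\vu_k)\|$, and invoke Lemma~\ref{lemma:approx-resolvent} for the per-iteration cost. The only cosmetic difference is in how you bound $\|\vu_k - \vub_k^*\| = \|P(\vu_k)\|$: the paper simply cites Eq.~\eqref{eq:in-halp-err-final} from the proof of Lemma~\ref{lemma:inexact-halpern} to get $\|P(\vu_k)\| = O(\|\vu_0 - \vu^*\|)$, whereas you route through $\|P(\vu_k)\| \leq 2\|\vu_k - \vu^*\|$ and then bound $\|\vu_k - \vu^*\|$ by the same inductive argument as Proposition~\ref{prop:bnds-of-the-iterates}---both arrive at the same estimate.
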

\begin{proof}
Recall that $\Tilde{P}(\vu_k) - P(\vu_k) = \ve_k$ and $\|\ve_k\| = \epsilon_k = \frac{\epsilon}{8(k+1)(k+2)} < \frac{\epsilon}{4}.$ Hence, as Algorithm~\ref{algo:mon-Lip} outputs a point $\vu_k$ with $\|\Tilde{P}(\vu_k)\| \leq \frac{3\epsilon}{4},$ by the triangle inequality, $\|P(\vu_k)\| \leq \epsilon.$

To bound the number of iterations until $\|\Tilde{P}(\vu_k)\| \leq \frac{3\epsilon}{4},$ note that, again by the triangle inequality, if $\|P(\vu_k)\| \leq \epsilon/2$, then $\|\Tilde{P}(\vu_k)\| \leq \frac{3\epsilon}{4}.$ Applying Lemma~\ref{lemma:inexact-halpern}, $\|P(\vu_k)\| \leq \epsilon/2$ after at most $k = \frac{8\|\vu_0 - \vu^*\|}{\epsilon}$ iterations, completing the proof of the first part of the theorem.

For the remaining part, using Lemma~\ref{lemma:approx-resolvent}, $\Tilde{J}_{F + \partial I_{\cu}}(\vu_k)$ can be computed (with target error $\epsilon_k$) in $O((L+1)\log(\frac{(L+1)\|\vu_k - {J}_{F + \partial I_{\cu}}(\vu_k)\|}{\epsilon_k})) = O((L+1)\log(\frac{(L+1)\|P(\vu_k)\|}{\epsilon}))$ iterations, as $O(\log(\frac{1}{\epsilon_k})) = O(\log(\frac{1}{\epsilon}))$ and $P(\vu_k) = \vu_k - {J}_{F + \partial I_{\cu}}(\vu_k),$ by definition. It remains to use that $\|P(\vu_k)\| = O(\|\vu_0 - \vu^*\|)$, which can be deduced from, e.g., Eq.~\eqref{eq:in-halp-err-final} in the proof of Lemma~\ref{lemma:inexact-halpern}.
\end{proof}
Similarly as before, $\|P(\vu_k)\| \leq \epsilon$ implies an $\epsilon$-approximate solution to~\eqref{eq:monotone-inclusion}, by Proposition~\ref{prop:P-as-an-approx}. When the diameter $D$ is bounded, $\|P(\vu_k)\| \leq \frac{\epsilon}{D}$ implies an $\epsilon$-approximate solution to~\eqref{eq:SVI}.
\begin{remark}
In degenerate cases where $L << 1$, instead of using the resolvent of $F + \partial I_{\cu}$, one could use the resolvent of $F/\eta + \partial I_{\cu}$ for $\eta = O(L),$ assuming the order of magnitude of $L$ is known (this is typically a mild assumption). Then, each approximate computation of the resolvent would take $O((L/\eta+1) \log(\frac{(L/\eta+1)\|\vu_0 - \vu^*\|}{\epsilon})$ oracle queries to $F,$ and we would need to require that $\|\Tilde{P}(\vu_k)\| \leq \frac{3\epsilon}{4\eta}$. Thus, the total number of queries to $F$ would be $O((L+\eta)\log(\frac{(L+\eta)\|\vu_0 - \vu^*\|}{\epsilon})).$
\end{remark}
%
%
%%%%%%%%%%%%
\subsection{Setups with Strongly Monotone and Lipschitz Operators}\label{sec:strongly-mon}
%
%
%Recall (from Section~\ref{sec:prelims}) that if $F$ is $L$-Lipschitz and $m$-strongly monotone, then it is also $\frac{m}{L^2}$-cocoercive. 
We now show that by restarting Algorithm~\ref{algo:mon-Lip}, we can obtain a parameter-free method with near-optimal oracle complexity. To simplify the exposition, we assume w.l.o.g.~that $L = \Omega(1).$ 
\begin{theorem}\label{thm:restarting-Lipschitz}
Given $F$ that is $L$-Lipschitz and $m$-strongly monotone, consider running the following algorithm $\mathcal{A}$, starting with $\vu_0\in \cu$:
\begin{center}
    $(\mathcal{A}):\quad$ At iteration $k,$ invoke Algorithm~\ref{algo:mon-Lip} with error parameter $\epsilon_k = \frac{7}{16}\|\Tilde{P}(\vu_{k-1})\|.$
\end{center}
Then, $\mathcal{A}$ outputs $\vu_k \in \cu$ with $\|P(\vu_k)\|\leq \epsilon$ after at most $1+\log_2(\frac{\|\vu_0 - \vu^*\|}{\epsilon})$ iterations, for any~$\epsilon \in (0, \frac{1}{2}]$. The total number of  queries to $F$ until $\|P(\vu_k)\|\leq \epsilon$ is $O\big( (L + \frac{L}{m})\log(\frac{\|\vu_0 - \vu^*\|}{\epsilon})\log(L + \frac{L}{m})\big).$ 
\end{theorem}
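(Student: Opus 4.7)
The plan is to analyze $\mathcal{A}$ as a standard geometric-restart scheme layered on top of Algorithm~\ref{algo:mon-Lip}. Each outer iteration invokes Algorithm~\ref{algo:mon-Lip} at an accuracy proportional to the current residual, which by Theorem~\ref{thm:mon-op} yields a geometric decay of $\|P(\vu_k)\|$. The key bootstrap is that $P = \mathrm{Id} - J_{F+\partial I_{\cu}}$ is firmly nonexpansive (Fact~\ref{fact:firmly-ne}) and hence $1$-Lipschitz, so, since $P(\vu^*) = \zeros,$ the initial residual is bounded by $\|P(\vu_0)\| \leq \|\vu_0 - \vu^*\|.$

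First I would show that each outer iteration contracts the residual by a factor of at least $1/2$. By the exit condition of Algorithm~\ref{algo:mon-Lip} run with accuracy $\epsilon_k,$ the output satisfies $\|\Tilde{P}(\vu_k)\| \leq \tfrac{3}{4}\epsilon_k = \tfrac{21}{64}\|\Tilde{P}(\vu_{k-1})\|,$ and the final inner error $\|\ve_k\|$ (upper-bounded by $\epsilon_k/(8(i+1)(i+2))$ in Algorithm~\ref{algo:mon-Lip}) is small enough that $\|P(\vu_k)\| \leq \epsilon_k$ via the triangle inequality. Iterating this relation and using $\|\Tilde{P}(\vu_0)\| \leq \|P(\vu_0)\| + \|\ve_0\| \leq \|\vu_0 - \vu^*\| + O(\epsilon)$ with a fixed initial tolerance for computing $\Tilde{P}(\vu_0),$ we reach $\|P(\vu_k)\| \leq \epsilon$ in at most $K = 1 + \log_2(\|\vu_0 - \vu^*\|/\epsilon)$ restarts. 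The constants $\tfrac{3}{4}$ and $\tfrac{7}{16}$ are chosen precisely so that this geometric decay survives absorbing the $\Tilde{P}$-vs-$P$ slack; this bookkeeping is the main obstacle in the proof.

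To bound the per-restart oracle cost, I would use strong monotonicity to translate a residual bound into a distance bound. Setting $\vub_{k-1} = J_{F+\partial I_\cu}(\vu_{k-1}) = \vu_{k-1} - P(\vu_{k-1}),$ the defining optimality condition of the resolvent gives $P(\vu_{k-1}) - F(\vub_{k-1}) \in \partial I_\cu(\vub_{k-1}),$ so for any $\vv \in \cu$, $\innp{F(\vub_{k-1}) - P(\vu_{k-1}), \vv - \vub_{k-1}} \geq 0$. Taking $\vv = \vu^*$, combining with $\innp{F(\vu^*), \vub_{k-1} - \vu^*} \geq 0$ (since $\vu^*$ solves \eqref{eq:SVI}), and invoking Eq.~\eqref{eq:str-mon-op} yields
$$\tfrac{m}{2}\|\vub_{k-1} - \vu^*\|^2 \leq \innp{F(\vub_{k-1}) - F(\vu^*), \vub_{k-1} - \vu^*} \leq \|P(\vu_{k-1})\|\cdot\|\vub_{k-1} - \vu^*\|,$$
which gives $\|\vu_{k-1} - \vu^*\| \leq (1 + 2/m)\|P(\vu_{k-1})\|.$ Substituting into Theorem~\ref{thm:mon-op}, and using that $\|P(\vu_{k-1})\|/\epsilon_k = O(1)$ from the construction of $\epsilon_k,$ the number of oracle queries incurred during restart $k$ is $O\big((L + L/m)\log(L + L/m)\big).$

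Multiplying the per-restart cost by the $O(\log(\|\vu_0 - \vu^*\|/\epsilon))$ restart count from the first step gives the advertised total oracle complexity $O\big((L + L/m)\log(\|\vu_0 - \vu^*\|/\epsilon)\log(L + L/m)\big).$ The hypothesis $\epsilon \in (0, 1/2]$ together with $L = \Omega(1)$ is only invoked to collapse lower-order terms and to ensure the logarithmic factors are nonnegative.
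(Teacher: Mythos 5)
The proposal is correct and follows essentially the same route as the paper: show each restart halves $\|P(\vu_k)\|$ using the exit condition of Algorithm~\ref{algo:mon-Lip} together with the $\widetilde{P}$-vs-$P$ slack bound, then bound $\|\vu_{k-1} - \vu^*\|$ in terms of $\|P(\vu_{k-1})\|$ via the resolvent optimality condition plus $m$-strong monotonicity to control the per-restart cost from Theorem~\ref{thm:mon-op}. Two minor points where you are actually slightly more careful than the paper's own write-up: you use firm nonexpansiveness of $P$ to get the $1$-Lipschitz bound $\|P(\vu_0)\| \leq \|\vu_0 - \vu^*\|$ directly (the paper uses the looser ``$2$-Lipschitz via $\tfrac{1}{2}$-cocoercivity'' route), and you correctly carry the $\tfrac{m}{2}$ from Eq.~\eqref{eq:str-mon-op}, obtaining $\|\vu_{k-1} - \vu^*\| \leq (1 + 2/m)\|P(\vu_{k-1})\|$ whereas the paper drops the factor of $2$; both discrepancies only affect constants hidden inside the $O(\cdot)$ and are harmless.
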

\begin{proof}
The first part  is immediate, as each call to Algorithm~\ref{algo:mon-Lip} ensures, due to Theorem~\ref{thm:mon-op}, that 
$$
\|P(\vu_k)\| \leq \frac{\|7\Tilde{P}(\vu_{k-1})\|}{16} \leq \frac{7\|{P}(\vu_{k-1})\|}{16} + \frac{\epsilon_k}{8} \leq \frac{\|{P}(\vu_{k-1})\|}{2},
$$ 
and $\|P(\vu_0)\| \leq 2\|\vu_0 - \vu^*\|$ as $P$ is 2-Lipschitz (because it is $\frac{1}{2}$-cocoercive) and $P(\vu^*) = \zeros.$

It remains to bound the number of calls to $F$ for each call to Algorithm~\ref{algo:mon-Lip}. Using Theorem~\ref{thm:mon-op} and $\|\Tilde{P}(\vu_k)\| = \Theta(\|P(\vu_k)\|)$, each call to Algorithm~\ref{algo:mon-Lip} takes $O(\frac{L\|\vu_{k-1} - \vu^*\|}{\|P(\vu_{k-1})\|} \log(\frac{L\|\vu_{k-1} - \vu^*\|}{\|P(\vu_{k-1})\|}))$ calls to $F.$ %If $P(\vu_{k-1}) \geq m \|\vu_{k-1} - \vu^*\|,$ we would be done, so assume $P(\vu_{k-1}) < m \|\vu_{k-1} - \vu^*\|.$
Denote $\vub^*_{k-1} = J_{F+\partial I_{\cu}}(\vu_{k-1}) = \vu_{k-1} - P(\vu_{k-1}).$ Using Proposition~\ref{prop:P-as-an-approx}:
$$
    \innp{F(\vub^*_{k-1}), \vub^*_{k-1} - \vu^*}\leq \|P(\vu_{k-1})\|\|\vub^*_{k-1} - \vu^*\|.
$$
On the other hand, as $F$ is $m$-strongly monotone and $\vu^*$ is an~\eqref{eq:MVI} solution, 
$$
    m \|\vub^*_{k-1} - \vu^*\|^2 \leq \innp{F(\vub_{k-1}^*), \vub^*_{k-1} - \vu^*}.
$$ 
Hence: $\|\vub^*_{k-1} - \vu^*\| \leq \frac{1}{m}\|P(\vu_{k-1})\|.$ It remains to use the triangle inequality and $P(\vu_{k-1}) = \vu_{k-1} - \vub^*_{k-1}$ to obtain:
$%\begin{equation}\label{eq:last-eq-for-m}
    \|\vu_{k-1} - \vu^*\| \leq \big(1 + \frac{1}{m}\big)\|P(\vu_{k-1})\|.%, 
$%\end{equation}
%which completes the proof.
%Now, we have started with the assumption that $P(\vu_{k-1}) < m \|\vu_{k-1} - \vu^*\|.$  But, from Eq.~\eqref{eq:last-eq-for-m}, this means that it must be $1 + \frac{1}{m} \leq m.$ 
\end{proof}
%
%
%%%%%%%%%%%%%
\section{Lower Bound Reductions}\label{sec:optimality}
In this section, we only state the lower bounds, while more details about the oracle model and the proof are deferred to Appendix~\ref{appx:omitted-proofs}.
\begin{lemma}\label{lemma:lower-bounds}
%Let $\mathcal{B}_{\vu_k}$ denote the unit ball w.r.t.~$\|\cdot\|$, centered at $\vu_k.$ 
For any deterministic algorithm working in the operator oracle model and any $L, D >0$, there exists an $L$-Lipschitz-continuous operator $F$ and a closed convex feasible set $\cu$ with diameter $D$ such that:
\begin{enumerate}[label=(\alph*)]
    \item For all $\epsilon>0$ such that $k = \frac{L D^2}{\epsilon} = O(d)$, $\max_{\vu \in \cu} \innp{F(\vu_k), \vu_k - \vu} = \Omega(\epsilon)$;\label{item:lb-SVI}
    \item For all $\epsilon>0$ such that $k = \frac{L D}{\epsilon} = O(d)$, $\max_{\vu \in \{\cu\cap \mathcal{B}_{\vu_k}\}} \innp{F(\vu_k), \vu_k - \vu} = \Omega(\epsilon)$; \label{item:lb-MI-Lip}
    \item  If $F$ is $\frac{1}{L}$-cocoercive, then for all $\epsilon>0$ such that $k = \frac{L D}{\epsilon \log(D/\epsilon)} = O(d)$, it holds that  $$\max_{\vu \in \{\cu\cap \mathcal{B}_{\vu_k}\}} \innp{F(\vu_k), \vu_k - \vu} = \Omega(\epsilon);$$\label{item:MI-lb-coco}
    \item If $F$ is $m$-strongly monotone, then for all $\epsilon>0$ such that $k = \frac{L}{m} = O(d)$, it holds that  $$\max_{\vu \in \{\cu\cap \mathcal{B}_{\vu_k}\}} \innp{F(\vu_k), \vu_k - \vu} = \Omega(\epsilon).$$ \label{item:MI-lb-str-mon}
\end{enumerate}
\end{lemma}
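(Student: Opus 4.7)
The plan is to derive all four lower bounds by reducing each problem class to the lower bound of \cite{Ouyang2019} for the optimality gap in convex-concave min-max optimization. That result exhibits, for any $k = O(d)$, a bilinear convex-concave $\Phi$ on a closed convex set $\cu$ of diameter $\Theta(D)$ whose saddle-point operator $F$ is monotone and $L$-Lipschitz, and for which every deterministic first-order method has optimality gap $\Omega(LD^2/k)$ after $k$ queries. Because $F$ is monotone and $\cu$ is bounded, this optimality gap coincides (up to constants) with the SVI residual $\max_{\vu \in \cu} \innp{F(\vu_k), \vu_k - \vu}$, so setting $k = LD^2/\epsilon$ immediately yields part~\ref{item:lb-SVI}.

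For part~\ref{item:lb-MI-Lip}, I would invoke Fact~\ref{fact:approx-mon-incl}: if $\vu_k$ is an $\epsilon$-approximate MI solution in the sense that $\max_{\vu \in \{\cu \cap \mathcal{B}_{\vu_k}\}}\innp{F(\vu_k), \vu_k - \vu} \leq \epsilon$, then by scaling along the diameter $\max_{\vu \in \cu}\innp{F(\vu_k), \vu_k - \vu} \leq \epsilon D$. Thus an $\epsilon$-MI solution supplies an $\epsilon D$-SVI solution, and part~\ref{item:lb-SVI} applied with $\epsilon \leftarrow \epsilon D$ forces $k = \Omega(LD/\epsilon)$.

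For part~\ref{item:MI-lb-coco}, I would reduce cocoercive MI to monotone-Lipschitz MI. Given any monotone $L$-Lipschitz $F$, the operator $P = \mathrm{Id} - J_{F + \partial I_\cu}$ is $\tfrac{1}{2}$-cocoercive, and by Proposition~\ref{prop:P-as-an-approx} an $\epsilon$-approximate zero of $P$ yields an $\epsilon$-approximate MI solution for $F$. By Lemma~\ref{lemma:approx-resolvent}, each query to $P$ can be simulated to accuracy $\epsilon$ using only $O((L+1)\log(1/\epsilon))$ queries to $F$. Hence any algorithm that solves $\tfrac{1}{L'}$-cocoercive MI in $q(\epsilon)$ queries yields an algorithm for monotone-Lipschitz MI using $O(q(\epsilon) \cdot L \log(D/\epsilon))$ queries; combined with part~\ref{item:lb-MI-Lip} and absorbing the cocoercivity constant via rescaling, this forces $q(\epsilon) = \Omega(LD / (\epsilon \log(D/\epsilon)))$.

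For part~\ref{item:MI-lb-str-mon}, I would use a regularization-style reduction: given the hard monotone $L$-Lipschitz instance $F_0$ from \cite{Ouyang2019}, define $F = F_0 + m\,\mathrm{Id}$, which is $m$-strongly monotone and $\Theta(L)$-Lipschitz. An $\epsilon$-MI solution of $F$ is within $O(\epsilon/m)$ of the zero of $F_0$, so tuning $m = \Theta(\epsilon/D)$ and invoking part~\ref{item:lb-MI-Lip} yields $k = \Omega(LD/\epsilon) = \Omega(L/m)$. The main obstacle will be in part~\ref{item:MI-lb-coco}: the reduction has to be made rigorous relative to a well-defined deterministic operator oracle model, ensuring that the simulation of $P$ through approximate resolvent evaluations is ``oracle-faithful'' in the sense that no shortcut exists that bypasses the logarithmic overhead. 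A secondary subtlety appears in part~\ref{item:MI-lb-str-mon}, where one has to verify that the added regularization does not destroy the properties of the Ouyang-Xu construction used in the information-theoretic argument.
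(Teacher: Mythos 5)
Your proposal follows the same chain of reductions as the paper's proof: (a) from the Ouyang--Xu bound via the inequality optimality gap $\le$ SVI residual, (b) from (a) by diameter scaling, (c) from (b) by simulating the $\tfrac12$-cocoercive operator $P = \mathrm{Id} - J_{F+\partial I_\cu}$ through approximate resolvents (Lemma~\ref{lemma:approx-resolvent}), and (d) from (b) by Tikhonov regularization with strong-monotonicity modulus $m = \Theta(\epsilon/D)$. Two small refinements worth noting: in (a) only the one-sided bound gap $\le$ SVI residual is needed and is what convexity--concavity gives (the two quantities are not equivalent up to constants in general), and in (d) the regularizer should be centered at $\vu_0$, i.e.\ $\bar F = F_0 + \frac{\epsilon}{2D}(\cdot - \vu_0)$, so the perturbation $\frac{\epsilon}{2D}(\vu_k - \vu_0)$ is $O(\epsilon)$ over $\cu$, and the argument is cleanest stated directly on the local SVI residual of $F_0$ rather than via distance to a zero (which $F_0$ and $\bar F$ do not share).
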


Parts~\ref{item:lb-SVI} and~\ref{item:lb-MI-Lip} of Lemma~\ref{lemma:lower-bounds} certify that Algorithm~\ref{algo:mon-Lip} is optimal up to a logarithmic factor, due to Theorem~\ref{thm:mon-op}. This is true because we can run Algorithm~\ref{algo:mon-Lip} with accuracy $\frac{\epsilon}{D}$ to obtain $\max_{\vu \in \cu} \innp{F(\vu_k), \vu_k - \vu} = O(\epsilon)$ in $k = O(\frac{L D^2}{\epsilon}\log(\frac{LD}{\epsilon}))$ iterations, or with accuracy $\epsilon$ to obtain $\max_{\vu \in \{\cu\cap \mathcal{B}_{\vu_k}\}} \innp{F(\vu_k), \vu_k - \vu} = O(\epsilon)$ in $k = O(\frac{LD}{\epsilon}\log(\frac{LD}{\epsilon}))$ iterations (see Proposition~\ref{prop:P-as-an-approx}). 

Part~\ref{item:MI-lb-coco} of Lemma~\ref{lemma:lower-bounds} certifies that Algorithm~\ref{algo:halpern-constr} is optimal up to a $\log(D/\epsilon)$ factor, due to Theorem~\ref{thm:halpern-cocoerc-constrained}. Part~\ref{item:MI-lb-str-mon} certifies that the restarting algorithm from Theorem~\ref{thm:restarting-Lipschitz} is optimal up to a factor $\log(D/\epsilon)\log(L/m)$ whenever $L = \Omega(L/m).$ Note that $L = \Omega(L/m)$ can be ensured by a proper scaling of the problem instance, as any such scaling would leave the condition number $L/m$ unaffected and would only impact the target error $\epsilon,$ which only appears under a logarithm. 

\section{Conclusion}
We showed that variants of Halpern iteration can be used to obtain near-optimal methods for solving different classes of monotone inclusion problems with Lipschitz operators. The results highlight connections between monotone inclusion, variational inequalities, fixed points of nonexpansive maps, and proximal-point-type algorithms. 
Some interesting questions that merit further investigation remain. In particular, one open question that arises is to close the gap between the upper and lower bounds provided here. We conjecture that the optimal complexity of monotone inclusion is: (i) $\Theta(\frac{LD}{\epsilon})$ when the operator is either $L$-Lipschitz or $\frac{1}{L}$-cocoercive, and (ii) $\Theta(\frac{L}{m}\log(\frac{L D}{\epsilon}))$ when the operator is $L$-Lipschitz and $m$-strongly monotone.  

\section*{Acknowledgements}
We thank Prof.~Ulrich Kohlenbach for useful comments and pointers to the literature. We also thank Howard Heaton for pointing out a typo in the proof of Lemma 2.1 in a previous version of this paper. 
\ifarxiv
\bibliographystyle{plainnat}
\fi
\bibliography{references}

\begin{thebibliography}{37}
\providecommand{\natexlab}[1]{#1}
\providecommand{\url}[1]{\texttt{#1}}
\expandafter\ifx\csname urlstyle\endcsname\relax
  \providecommand{\doi}[1]{doi: #1}\else
  \providecommand{\doi}{doi: \begingroup \urlstyle{rm}\Url}\fi

\bibitem[Arjovsky and Bottou(2017)]{arjovsky2016towards}
Martin Arjovsky and Leon Bottou.
\newblock Towards principled methods for training generative adversarial
  networks.
\newblock In \emph{Proc.~ICLR'17}, 2017.

\bibitem[Arjovsky et~al.(2017)Arjovsky, Chintala, and
  Bottou]{arjovsky2017wasserstein}
Martin Arjovsky, Soumith Chintala, and L{\'e}on Bottou.
\newblock Wasserstein {GAN}.
\newblock \emph{arXiv preprint arXiv:1701.07875}, 2017.

\bibitem[Asi and Duchi(2019)]{asi2019stochastic}
Hilal Asi and John~C Duchi.
\newblock Stochastic (approximate) proximal point methods: Convergence,
  optimality, and adaptivity.
\newblock \emph{SIAM Journal on Optimization}, 29\penalty0 (3):\penalty0
  2257--2290, 2019.

\bibitem[Bauschke(1996)]{bauschke1996approximation}
Heinz~H Bauschke.
\newblock The approximation of fixed points of compositions of nonexpansive
  mappings in {H}ilbert space.
\newblock \emph{Journal of Mathematical Analysis and Applications},
  202\penalty0 (1):\penalty0 150--159, 1996.

\bibitem[Bauschke and Combettes(2011)]{bauschke2011convex}
Heinz~H Bauschke and Patrick~L Combettes.
\newblock \emph{Convex analysis and monotone operator theory in Hilbert
  spaces}, volume 408.
\newblock Springer, 2011.

\bibitem[Browder(1967)]{browder1967convergence}
Felix~E Browder.
\newblock Convergence of approximants to fixed points of nonexpansive nonlinear
  mappings in {B}anach spaces.
\newblock \emph{Archive for Rational Mechanics and Analysis}, 24\penalty0
  (1):\penalty0 82--90, 1967.

\bibitem[Dang and Lan(2015)]{dang2015convergence}
Cong~D Dang and Guanghui Lan.
\newblock On the convergence properties of non-{E}uclidean extragradient
  methods for variational inequalities with generalized monotone operators.
\newblock \emph{Computational Optimization and Applications}, 60\penalty0
  (2):\penalty0 277--310, 2015.

\bibitem[Davis and Drusvyatskiy(2019)]{davis2019stochastic}
Damek Davis and Dmitriy Drusvyatskiy.
\newblock Stochastic model-based minimization of weakly convex functions.
\newblock \emph{SIAM Journal on Optimization}, 29\penalty0 (1):\penalty0
  207--239, 2019.

\bibitem[Drori and Teboulle(2014)]{drori2014performance}
Yoel Drori and Marc Teboulle.
\newblock Performance of first-order methods for smooth convex minimization: a
  novel approach.
\newblock \emph{Mathematical Programming}, 145\penalty0 (1-2):\penalty0
  451--482, 2014.

\bibitem[Facchinei and Pang(2003)]{facchinei2007finite}
Francisco Facchinei and Jong-Shi Pang.
\newblock \emph{Finite-dimensional variational inequalities and complementarity
  problems}.
\newblock Springer Science \& Business Media, 2003.

\bibitem[Ghadimi and Lan(2016)]{ghadimi2016accelerated}
Saeed Ghadimi and Guanghui Lan.
\newblock Accelerated gradient methods for nonconvex nonlinear and stochastic
  programming.
\newblock \emph{Mathematical Programming}, 156\penalty0 (1-2):\penalty0 59--99,
  2016.

\bibitem[Gidel et~al.(2019)Gidel, Berard, Vignoud, Vincent, and
  Lacoste-Julien]{gidel2018variational}
Gauthier Gidel, Hugo Berard, Ga{\"e}tan Vignoud, Pascal Vincent, and Simon
  Lacoste-Julien.
\newblock A variational inequality perspective on generative adversarial
  networks.
\newblock In \emph{Proc.~ICLR'19}, 2019.

\bibitem[Goodfellow et~al.(2014)Goodfellow, Pouget-Abadie, Mirza, Xu,
  Warde-Farley, Ozair, Courville, and Bengio]{goodfellow2014generative}
Ian Goodfellow, Jean Pouget-Abadie, Mehdi Mirza, Bing Xu, David Warde-Farley,
  Sherjil Ozair, Aaron Courville, and Yoshua Bengio.
\newblock Generative adversarial nets.
\newblock In \emph{Proc.~NIPS'14}, 2014.

\bibitem[Halpern(1967)]{halpern1967fixed}
Benjamin Halpern.
\newblock Fixed points of nonexpanding maps.
\newblock \emph{Bulletin of the American Mathematical Society}, 73\penalty0
  (6):\penalty0 957--961, 1967.

\bibitem[Kim(2019)]{kim2019accelerated}
Donghwan Kim.
\newblock Accelerated proximal point method and forward method for monotone
  inclusions.
\newblock \emph{arXiv preprint arXiv:1905.05149}, 2019.

\bibitem[Kim and Fessler(2018)]{kim2018optimizing}
Donghwan Kim and Jeffrey~A Fessler.
\newblock Optimizing the efficiency of first-order methods for decreasing the
  gradient of smooth convex functions.
\newblock \emph{arXiv preprint arXiv:1803.06600}, 2018.

\bibitem[Kohlenbach(2008)]{kohlenbach2008applied}
Ulrich Kohlenbach.
\newblock \emph{Applied proof theory: proof interpretations and their use in
  mathematics}.
\newblock Springer Science \& Business Media, 2008.

\bibitem[Kohlenbach(2011)]{kohlenbach2011quantitative}
Ulrich Kohlenbach.
\newblock On quantitative versions of theorems due to fe browder and r.
  wittmann.
\newblock \emph{Advances in Mathematics}, 226\penalty0 (3):\penalty0
  2764--2795, 2011.

\bibitem[K{\"o}rnlein(2015)]{kornlein2015quantitative}
Daniel K{\"o}rnlein.
\newblock Quantitative results for halpern iterations of nonexpansive mappings.
\newblock \emph{Journal of Mathematical Analysis and Applications},
  428\penalty0 (2):\penalty0 1161--1172, 2015.

\bibitem[Korpelevich(1977)]{korpelevich1977extragradient}
GM~Korpelevich.
\newblock Extragradient method for finding saddle points and other problems.
\newblock \emph{Matekon}, 13\penalty0 (4):\penalty0 35--49, 1977.

\bibitem[Leustean(2007)]{leustean2007rates}
Laurentiu Leustean.
\newblock Rates of asymptotic regularity for halpern iterations of nonexpansive
  mappings.
\newblock \emph{Journal of Universal Computer Science}, 13\penalty0
  (11):\penalty0 1680--1691, 2007.

\bibitem[Lieder(2017)]{lieder2019convergence}
Felix Lieder.
\newblock On the convergence rate of the {Halpern}-iteration, 2017.
\newblock \url{http://www.optimization-online.org/DB_FILE/2017/11/6336.pdf}.

\bibitem[Lin et~al.(2017)Lin, Mairal, and Harchaoui]{lin2017catalyst}
Hongzhou Lin, Julien Mairal, and Zaid Harchaoui.
\newblock Catalyst acceleration for first-order convex optimization: From
  theory to practice.
\newblock \emph{The Journal of Machine Learning Research}, 18\penalty0
  (1):\penalty0 7854--7907, 2017.

\bibitem[Lin et~al.(2018)Lin, Liu, Rafique, and Yang]{lin2018solving}
Qihang Lin, Mingrui Liu, Hassan Rafique, and Tianbao Yang.
\newblock Solving weakly-convex-weakly-concave saddle-point problems as
  weakly-monotone variational inequality.
\newblock \emph{arXiv preprint arXiv:1810.10207}, 2018.

\bibitem[Madry et~al.(2018)Madry, Makelov, Schmidt, Tsipras, and
  Vladu]{madry2018towards}
Aleksander Madry, Aleksandar Makelov, Ludwig Schmidt, Dimitris Tsipras, and
  Adrian Vladu.
\newblock Towards deep learning models resistant to adversarial attacks.
\newblock In \emph{Proc.~ICLR'18}, 2018.

\bibitem[Mokhtari et~al.(2019)Mokhtari, Ozdaglar, and
  Pattathil]{mokhtari2019unified}
Aryan Mokhtari, Asuman Ozdaglar, and Sarath Pattathil.
\newblock A unified analysis of extra-gradient and optimistic gradient methods
  for saddle point problems: Proximal point approach.
\newblock \emph{arXiv preprint arXiv:1901.08511}, 2019.

\bibitem[Nemirovski(2004)]{nemirovski2004prox}
Arkadi Nemirovski.
\newblock Prox-method with rate of convergence {$O(1/t)$} for variational
  inequalities with {L}ipschitz continuous monotone operators and smooth
  convex-concave saddle point problems.
\newblock \emph{SIAM Journal on Optimization}, 15\penalty0 (1):\penalty0
  229--251, 2004.

\bibitem[Nesterov(2007)]{nesterov2007dual}
Yurii Nesterov.
\newblock Dual extrapolation and its applications to solving variational
  inequalities and related problems.
\newblock \emph{Mathematical Programming}, 109\penalty0 (2-3):\penalty0
  319--344, 2007.

\bibitem[Nesterov(2012)]{nesterov2012make}
Yurii Nesterov.
\newblock How to make the gradients small.
\newblock \emph{Optima. Mathematical Optimization Society Newsletter},
  \penalty0 (88):\penalty0 10--11, 2012.

\bibitem[Nesterov(2018)]{nesterov2018lectures}
Yurii Nesterov.
\newblock \emph{Lectures on convex optimization}, volume 137.
\newblock Springer, 2018.

\bibitem[Nesterov and Scrimali(2011)]{nesterov2011solving}
Yurii Nesterov and Laura Scrimali.
\newblock Solving strongly monotone variational and quasi-variational
  inequalities.
\newblock \emph{Discrete \& Continuous Dynamical Systems-A}, 31\penalty0
  (4):\penalty0 1383--1396, 2011.

\bibitem[Ouyang and Xu(2019)]{Ouyang2019}
Yuyuan Ouyang and Yangyang Xu.
\newblock Lower complexity bounds of first-order methods for convex-concave
  bilinear saddle-point problems.
\newblock \emph{Mathematical Programming}, Aug 2019.

\bibitem[Ryu and Boyd(2016)]{ryu2016primer}
Ernest~K Ryu and Stephen Boyd.
\newblock Primer on monotone operator methods.
\newblock \emph{Applied and Computational Mathematics}, 15\penalty0
  (1):\penalty0 3--43, 2016.

\bibitem[Ryu et~al.(2019)Ryu, Yuan, and Yin]{ryu2019ode}
Ernest~K Ryu, Kun Yuan, and Wotao Yin.
\newblock {ODE} analysis of stochastic gradient methods with optimism and
  anchoring for minimax problems and {GAN}s.
\newblock \emph{arXiv preprint arXiv:1905.10899}, 2019.

\bibitem[Stonyakin et~al.(2018)Stonyakin, Gasnikov, Dvurechensky, Alkousa, and
  Titov]{stonyakin2018generalized}
Fedor Stonyakin, Alexander Gasnikov, Pavel Dvurechensky, Mohammad Alkousa, and
  Alexander Titov.
\newblock Generalized mirror prox for monotone variational inequalities:
  Universality and inexact oracle.
\newblock \emph{arXiv preprint arXiv:1806.05140}, 2018.

\bibitem[Wittmann(1992)]{wittmann1992approximation}
Rainer Wittmann.
\newblock Approximation of fixed points of nonexpansive mappings.
\newblock \emph{Archiv der Mathematik}, 58\penalty0 (5):\penalty0 486--491,
  1992.

\bibitem[Xu(2002)]{xu2002iterative}
Hong-Kun Xu.
\newblock Iterative algorithms for nonlinear operators.
\newblock \emph{Journal of the London Mathematical Society}, 66\penalty0
  (1):\penalty0 240--256, 2002.

\end{thebibliography}

\appendix
%
%%%%%%%%%%%%%%%
\section{Omitted Proofs}\label{appx:omitted-proofs}
%
%%%%%%%%%%%%%%%%%%%
%
\subsection{Unconstrained Setting with a Cocoercive Operator}
\begin{replemma}{lemma:c-k-non-inc}
    Let $\cc_k$ be defined as in Eq.~\eqref{eq:potential} and let $\vu^*$ be the solution to~\eqref{eq:monotone-inclusion} that minimizes $\|\vu_0 - \vu^*\|$. % . Assume that there exists a point $\vu^* \in \cu$ that solves~\eqref{eq:MVI}. 
    Assume further that $\innp{F(\vu_1) - F(\vu_0), \vu_1 - \vu_0} \geq \frac{1}{L_1}\|F(\vu_1) - F(\vu_0)\|^2.$ If $A_{k+1}\cc_{k+1} \leq A_k\cc_k,$ $\forall k \geq 1,$ where $\{A_i\}_{i\geq 1}$ is a sequence of positive numbers that satisfies $A_1 = 1$, then:
    $$
       (\forall k \geq 1):\quad \|F(\vu_k)\| \leq L_{k}\frac{\lambda_k}{1-\lambda_k}{\|\vu_0 - \vu^*\|}.
    $$
\end{replemma}
\begin{proof}
The statement holds trivially if $\|F(\vu_k)\| = 0,$ so assume that $\|F(\vu_k)\|>0.$ 
Under the assumption of the lemma, we have that $A_k\cc_k \leq \cc_1,$ $\forall k \geq 1.$ From~\eqref{eq:halpern} and $\lambda_1 = \frac{1}{2}$, $\vu_1 = \vu_0 - \frac{1}{ L_1}F(\vu_0),$ and thus: 
$%\begin{align*}
    \cc_1 = \frac{1}{ L_1}\|F(\vu_1)\|^2 - \frac{1}{ L_1}\innp{F(\vu_1), F(\vu_0)}. 
$%\end{align*}

Let $\vu^*$ be an arbitrary solution to~\eqref{eq:monotone-inclusion} (and thus also to~\eqref{eq:MVI}). As $\innp{F(\vu_1) - F(\vu_0), \vu_1 - \vu_0} \geq \frac{1}{ L_1}\|F(\vu_1) - F(\vu_0)\|^2$ and $\vu_1 = \vu_0 - \frac{1}{ L_1}F(\vu_0),$ it follows that $\|F(\vu_1)\|^2 \leq \innp{F(\vu_0), F(\vu_1)},$ and, thus $\cc_1 \leq 0.$ 
Further, as $A_k >0,$ we also have $\cc_k \leq 0,$ and, hence:
\begin{align*}
    \|F(\vu_k)\|^2 &\leq  L_k\frac{\lambda_k}{1-\lambda_k}\innp{F(\vu_k), \vu_0 - \vu_k}\\
    &=  L_k\frac{\lambda_k}{1-\lambda_k}\innp{F(\vu_k), \vu_0 - \vu^*} +  L_k\frac{\lambda_k}{1-\lambda_k}\innp{F(\vu_k), \vu^* - \vu_k}\\
    &\leq   L_k\frac{\lambda_k}{1-\lambda_k}\innp{F(\vu_k), \vu_0 - \vu^*} \leq  L_k\frac{\lambda_k}{1-\lambda_k}\|F(\vu_k)\|\cdot\|\vu_0 - \vu^*\|,
\end{align*}
where the last line is by $\vu^*$ being a solution to~\eqref{eq:MVI} and by the Cauchy-Schwarz inequality. 
The conclusion of the lemma now follows by dividing both sides of $\|F(\vu_k)\|^2 \leq  L_k\frac{\lambda_k}{1-\lambda_k}\|F(\vu_k)\|\cdot\|\vu_0 - \vu^*\|$ by $\|F(\vu_k)\|$ and observing that the statement holds for an arbitrary solution $\vu^*$ to \eqref{eq:monotone-inclusion}, and thus, it also holds for the one that minimizes the distance to $\vu_0.$ 
\end{proof}
\begin{replemma}{lemma:pot-dec}
    Let $\cc_k$ be defined as in Eq.~\eqref{eq:potential}. Let $\{A_i\}_{i \geq 1}$ be defined recursively as $A_1 = 1$ and $A_{k+1} = A_k \frac{\lambda_k}{(1-\lambda_k)\lambda_{k+1}}$ for $k \geq 1.$ Assume that $\{\lambda_i\}_{i\geq 1}$ is chosen so that $\lambda_1 = \frac{1}{2}$ and for $k \geq 1:$ $\frac{\lambda_{k+1}}{1-2\lambda_{k+1}} \geq  \frac{\lambda_k L_k}{(1-\lambda_k)L_{k+1}}$. Finally, assume that $L_k \in (0, \infty)$ and $\innp{F(\vu_k) - F(\vu_{k-1}), \vu_k - \vu_{k-1}}\geq \frac{1}{L_k}\|F(\vu_k) - F(\vu_{k-1})\|^2$, $\forall k.$ Then, 
    $$(\forall k \geq 1): \quad A_{k+1}\cc_{k+1} \leq A_k\cc_k.$$ 
\end{replemma}
\begin{proof}
By the assumption of the lemma, 
$$\frac{1}{L_{k+1}}\|F(\vu_{k+1}) - F(\vu_{k})\|^2\leq \innp{F(\vu_{k+1}) - F(\vu_{k}), \vu_{k+1} - \vu_{k}},$$ 
which, after expanding the left-hand side, can be equivalently written as:
\begin{equation}\notag
    \frac{1}{L_{k+1}}\|F(\vu_{k+1})\|^2 \leq \big\langle{F(\vu_{k+1}), \vu_{k+1} - \vu_k + \frac{2}{L_{k+1}}F(\vu_k)}\big\rangle - \big\langle{F(\vu_k), \vu_{k+1} - \vu_k + \frac{1}{L_{k+1}}F(\vu_k)}\big\rangle.
\end{equation}
From~\eqref{eq:halpern}, we have that $\vu_{k+1} - \vu_k = \frac{\lambda_{k+1}}{1-\lambda_{k+1}}(\vu_0 - \vu_{k+1}) - \frac{2}{L_{k+1}}F(\vu_k)$ and $\vu_{k+1} - \vu_k  = \lambda_{k+1}(\vu_0 - \vu_k) - \frac{2(1-\lambda_{k+1})}{L_{k+1}}F(\vu_k).$ Hence:
\begin{align*}%\notag
    \frac{1}{L_{k+1}}\|F(\vu_{k+1})\|^2 \leq &\; \frac{\lambda_{k+1}}{1-\lambda_{k+1}}\innp{F(\vu_{k+1}), \vu_0 - \vu_{k+1}} - \lambda_{k+1}\innp{F(\vu_k), \vu_0 - \vu_k}\\
    &+ \frac{1 - 2\lambda_{k+1}}{L_{k+1}}\|F(\vu_k)\|^2.
\end{align*}
Rearranging the last inequality and multiplying both sides by $A_{k+1},$ we have:
\begin{equation}\notag
\begin{aligned}
   &A_{k+1}\Big( \frac{1}{L_{k+1}}\|F(\vu_{k+1})\|^2 - \frac{\lambda_{k+1}}{1-\lambda_{k+1}}\innp{F(\vu_{k+1}), \vu_0 - \vu_{k+1}}\Big)\\
   &\hspace{1in}\leq  \frac{A_{k+1}(1 - 2\lambda_{k+1})}{L_{k+1}}\|F(\vu_k)\|^2 - A_{k+1}\lambda_{k+1}\innp{F(\vu_k), \vu_0 - \vu_k}.
\end{aligned}
\end{equation}
The left-hand side of the last inequality if precisely $A_{k+1}\cc_{k+1}.$ The right-hand side is $\leq A_k \cc_k,$ by the choice of sequences $\{A_i\}_{i \geq 1},$ $\{\lambda_i\}_{i\geq 1}.$ 
\end{proof}
%
%%%%%%%%%%%%%%%%%%%
\subsection{Operator Mapping}% and Constrained MI/VI}
\begin{repproposition}{prop:op-map-cocoercive}
Let $F$ be an $\frac{1}{L}$-cocoercive operator and let $G_\eta$ be defined as in Eq.~\eqref{eq:mon-op}, where $\eta \geq L.$ Then $G_\eta$ is $\frac{1}{2\eta}$-cocoercive. 
\end{repproposition}
\begin{proof}
As $\mathrm{Id} - \Pi_{\cu}$ is 1-cocoercive (by Fact~\ref{fact:firmly-ne}), we have, $\forall \vu, \vv \in E$:
\begin{align*}
    &\innp{\Pi_{\cu}\Big(\vu - \frac{1}{\eta}F(\vu)\Big) - \Pi_{\cu}\Big(\vv - \frac{1}{\eta}F(\vv)\Big), \vu - \frac{1}{\eta}F(\vu) - \Big(\vv - \frac{1}{\eta}F(\vv)\Big)}\\
    &\hspace{1in}= \innp{\frac{1}{\eta}(G_\eta(\vv) - G_{\eta}(\vu)) + \vu - \vv, \vu - \vv - \frac{1}{\eta}(F(\vu) - F(\vv))}\\
    & \hspace{1in} \geq \Big\|\frac{1}{\eta}(G_\eta(\vv) - G_{\eta}(\vu)) + \vu - \vv \Big\|^2.
\end{align*}
Hence:
\begin{equation}\label{eq:g-eta-Lip}
\begin{aligned}
    \frac{1}{\eta^2}\|G_\eta(\vv) - G_{\eta}(\vu)\|^2 \leq\;& \frac{1}{\eta}\innp{G_{\eta}(\vu) - G_{\eta}(\vv), \vu - \vv}\\
    &+ \frac{1}{\eta^2}\innp{G_{\eta}(\vu) - G_{\eta}(\vv), F(\vu) - F(\vv)}
    - \frac{1}{\eta}\innp{F(\vu) - F(\vv), \vu - \vv}.
\end{aligned}
\end{equation}
As $\eta \geq L$ and $F$ is $\frac{1}{L}$-cocoercive, $\frac{1}{\eta}\innp{F(\vu) - F(\vv), \vu - \vv} \geq \frac{1}{\eta^2}\|F(\vu) - F(\vv)\|^2.$ It remains to apply Young's inequality, which implies $\innp{G_{\eta}(\vu) - G_{\eta}(\vv), F(\vu) - F(\vv)} \leq \frac{1}{2}\|G_\eta(\vu) - G_{\eta}(\vv)\|^2 + \frac{1}{2}\|F(\vu) - F(\vv)\|^2.$
\end{proof}
\subsection{Approximating the Resolvent}
Let us start by proving the convergence of a version of the Extragradient method of~\cite{korpelevich1977extragradient} that does not require the knowledge of the Lipschitz constant $L$ (but does require knowledge of the strong monotonicity parameter $m$; when computing the resolvent we have $m = 1$). The algorithm is summarized in Algorithm~\ref{algo:eg+}.
\begin{algorithm}
\caption{EG Without the Knowledge of $L$}\label{algo:eg+}
%\begin{algorithmic}[1]
%\State 
\textbf{Input:} $a_0,\, \vu_0\in \cu,\, m, \epsilon.$ If not provided at the input or $> 1/m$, set $a_0 = 1/m.$ \;
\setcounter{AlgoLine}{0}

%\State 
\nl$\vub_0 = \Pi_{\cu}(\vu_{0} - a_k F(\vu_{0}))$\;

%\State 
\nl$k = 0$, $\delta_0 = \frac{a_0 m \epsilon}{5 \sqrt{2}}$\;

\nl\While{$\|\vub_k - \vu_k\|>\delta_k$}
{
%\State 
\nl $k = k+1$, $a_k = a_{k-1}$\;

%\State 
\nl $\vub_{k} = \Pi_{\cu}(\vu_{k} - a_k F(\vu_{k}))$\;

%\State 
\nl $\vu_{k+1} = \argmin_{\vu \in \cu}\Big\{a_k \innp{F(\vub_{k}), \vu} + \frac{a_k m}{2}\|\vu - \vub_{k}\|^2 + \frac{1}{2}\|\vu - \vu_{k}\|^2\Big\}$ \label{line:update-uk-1}\;

\nl \label{Line:eg+step-cond}\While{$a_k\innp{F(\vub_k) - F(\vu_k), \vub_k - \vu_{k+1}} > \frac{1}{4}\|\vu_{k+1} - \vub_{k}\|^2 + \frac{1}{4}\|\vub_k - \vu_k\|^2$} 
{
%\State 
\nl $a_k = \min\Big\{\frac{a_k}{2},\, \frac{\|\vub_k - \vu_k\|}{ \|F(\vub_k) - F(\vu_k)\|} \Big\}$ \label{line:eg+-min-ak}\;

%\State 
\nl $\vub_{k} = \Pi_{\cu}(\vu_{k} - a_k F(\vu_{k}))$\;

%\State 
\nl $\vu_{k+1} = \argmin_{\vu \in \cu}\Big\{a_k \innp{F(\vub_{k}), \vu} + \frac{a_k m}{2}\|\vu - \vub_{k}\|^2 + \frac{1}{2}\|\vu - \vu_{k}\|^2\Big\}$\label{line:update-uk-2}\;
}
%\EndWhile
%\State 
\nl $\delta_k = \frac{a_k m \epsilon}{5 \sqrt{2}}$ \;
}
%\EndWhile
%\State
\Return $\vu_k$
%\end{algorithmic}
\end{algorithm}
Observe that the update step for $\vu_k$ from Lines~\ref{line:update-uk-1} and~\ref{line:update-uk-2} can be written in the form of a projection onto $\cu;$ we chose to write it in the current form as it is more convenient for the analysis.

We now bound the convergence of Algorithm~\ref{algo:eg+}.
\begin{lemma}\label{lemma:eg+convergence}
 Let $a_0 > 0$ and let $F$ be $m$-strongly monotone and $L$-Lipschitz. Then, Algorithm~\ref{algo:eg+} outputs a point $\vu_k$ with $\|\vu_k - \vu^*\|\leq \epsilon$ after at most $k = O\big(\frac{L}{m}\log(\frac{L\|\vu_0 - \vu^*\|}{m\epsilon}\big))$ oracle queries to $F,$ where $\vu^*$ solves~\eqref{eq:SVI}. 
\end{lemma}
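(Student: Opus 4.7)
The plan is to combine three ingredients: (i) control of the step-size tuning in the inner while loop so that $a_k = \Theta(1/L)$ stably at exit, (ii) a per-iteration contraction of $\|\vu_k - \vu^*\|^2$ of rate $1/(1+a_k m)$ whenever the step-size condition holds, and (iii) a residual-to-distance bound that converts the exit criterion $\|\vub_k - \vu_k\|\le \delta_k$ into $\|\vu_k - \vu^*\|\le \epsilon$. For the inner loop at Line~\ref{Line:eg+step-cond}, a Cauchy--Schwarz plus Young's inequality argument combined with the $L$-Lipschitzness of $F$ gives
\begin{equation*}
  a_k \innp{F(\vub_k) - F(\vu_k),\, \vub_k - \vu_{k+1}} \;\le\; (a_k L)^2\|\vub_k - \vu_k\|^2 + \tfrac{1}{4}\|\vub_k - \vu_{k+1}\|^2,
\end{equation*}
so the condition is automatically satisfied whenever $a_k \le 1/(2L)$. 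Because each inner pass strictly shrinks $a_k$ via the $a_k/2$ term in Line~\ref{line:eg+-min-ak} (the other min term is $\ge 1/L$ by Lipschitzness), the value of $a_k$ at exit always satisfies $a_k \ge 1/(4L)$, and the total number of inner halvings across \emph{all} outer iterations is $O(\log(a_0 L)) = O(\log(L/m))$.

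Next, fix an outer iteration at which the step-size condition holds. I would write the first-order optimality condition for the $(1+a_k m)$-strongly convex $\vu_{k+1}$-subproblem, test it against $\vu^*$, expand the resulting inner products into the standard squared-norm identities, and then control $\innp{F(\vub_k),\vub_k - \vu^*}$ using $m$-strong monotonicity together with the SVI inequality $\innp{F(\vu^*),\vub_k - \vu^*}\ge 0$; control $\innp{F(\vu_k),\vub_k - \vu_{k+1}}$ via the projection inequality for $\vub_k = \Pi_\cu(\vu_k - a_k F(\vu_k))$; and absorb the residual cross term $a_k\innp{F(\vub_k) - F(\vu_k),\vub_k - \vu_{k+1}}$ using the step-size condition. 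This yields the clean contraction $\|\vu_{k+1} - \vu^*\|^2 \le (1+a_k m)^{-1}\|\vu_k - \vu^*\|^2$, which together with $a_k = \Omega(1/L)$ gives rate $1 - \Omega(m/L)$ per outer iteration. Consequently $\|\vu_k - \vu^*\|\le \epsilon'$ is reached within $O((L/m)\log(\|\vu_0 - \vu^*\|/\epsilon'))$ outer iterations for any target accuracy $\epsilon'$.

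Finally, observe that $\vub_k$ is exactly the operator-mapping step with $\eta = 1/a_k$, so $\|\vub_k - \vu_k\| = a_k\|G_{1/a_k}(\vu_k)\|$ and the exit $\|\vub_k - \vu_k\|\le \delta_k$ implies $\|G_{1/a_k}(\vu_k)\|\le \delta_k/a_k$. Applying Lemma~\ref{lemma:grad-mapping-approx} followed by strong monotonicity and the SVI inequality at $\vu^*$ yields $\|\vub_k - \vu^*\|\le 2(1+a_k L)\delta_k/(a_k m)$, after which a triangle inequality gives $\|\vu_k - \vu^*\|\le \epsilon$ for the chosen $\delta_k = a_k m\epsilon/(5\sqrt{2})$, using $a_k L = O(1)$. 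In the reverse direction, nonexpansiveness of $\Pi_\cu$ together with the fact that $\vu^* = \Pi_\cu(\vu^* - a_k F(\vu^*))$ gives $\|\vub_k - \vu_k\|\le (2+a_k L)\|\vu_k - \vu^*\|$, so the exit criterion is triggered within $O((L/m)\log(L\|\vu_0 - \vu^*\|/(m\epsilon)))$ outer iterations; combining with the inner-loop bookkeeping gives the advertised oracle complexity. The main obstacle I expect is the algebra of the middle step, namely carefully matching the extra $\tfrac{a_k m}{2}\|\vu - \vub_k\|^2$ term in the $\vu_{k+1}$-subproblem against the quadratic terms from the extragradient identity so that the line-search condition absorbs the mixed term into the clean $1/(1+a_k m)$ contraction factor rather than a weaker $(1 - \Omega(a_k m))$ bound.
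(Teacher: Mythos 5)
Your proposal is correct and follows essentially the same route as the paper: you verify the inner line-search terminates with $a_k > 1/(4L)$ via the same Cauchy--Schwarz/Young/Lipschitz chain, you establish the per-step contraction $\|\vu_{k+1}-\vu^*\|^2\le (1+a_km)^{-1}\|\vu_k-\vu^*\|^2$ from first-order optimality of the two projection subproblems combined with the three-point identity, strong monotonicity plus \eqref{eq:SVI}, and the line-search condition, and you convert the exit criterion to a distance bound via the operator-mapping identity, Lemma~\ref{lemma:grad-mapping-approx}, and strong monotonicity. The only cosmetic differences are that the paper packages the strong-monotonicity estimate inside a nonnegative merit function $f_k$ (whose nonnegativity is then immediately used to drop it), whereas you derive the contraction directly, and you bound $\|\vub_k-\vu_k\|$ by nonexpansiveness of $\Pi_\cu$ rather than reading it off the contraction inequality \eqref{eq:eg+-final-ineq}; neither change affects the argument. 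Your concern at the end is unfounded: both $(1+a_km)^{-1}$ and $1-\Omega(a_km)$ give the same $O\big(\tfrac{L}{m}\log(\cdot)\big)$ complexity.
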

\begin{proof}
Define $A_k = \sum_{i=0}^k a_i.$ 
To prove the lemma, we will use the following gap (or merit) functions:
\begin{equation}\notag
    f_k = \frac{1}{A_k}\sum_{i=0}^k a_i \Big(\innp{F(\vub_i), \vub_i - \vu^*} - \frac{m}{2}\|\vub_i - \vu^*\|^2\Big).
\end{equation}
As $F$ is strongly monotone, $f_k \geq 0,\, \forall k.$ 
By convention, we take $f_{-1} = 0$ and $A_{-1}=0$, so that $A_k f_k - A_{k-1}f_{k-1} = a_k\Big(\innp{F(\vub_k), \vub_k - \vu^*} - \frac{m}{2}\|\vub_k - \vu^*\|^2\Big).$  Let us now bound $A_k f_k - A_{k-1}f_{k-1}$, and observe that $A_k f_k - A_{k-1}f_{k-1}\geq 0$. First, write
\begin{equation}\label{eq:eg+-change-in-merit}
\begin{aligned}
    A_k f_k - A_{k-1}f_{k-1} =&\; a_k\Big(\innp{F(\vub_k), \vub_k - \vu^*} - \frac{m}{2}\|\vub_k - \vu^*\|^2\Big)\\
    =&\; a_k\innp{F(\vub_k), \vu_{k+1} - \vu^*} + a_k\innp{F(\vu_k), \vub_k - \vu_{k+1}} \\
    &+ a_k\innp{F(\vub_k) - F(\vu_k), \vub_k - \vu_{k+1}} - \frac{a_k m}{2}\|\vub_k - \vu^*\|^2.
\end{aligned}
\end{equation}
By the first-order optimality of $\vu_{k+1}$ in its definition, we have, $\forall \vu:$
$$
    \innp{a_k F(\vub_k) + a_k m(\vu_{k+1} - \vub_{k}) + \vu_{k+1} - \vu_k, \vu - \vu_{k+1}} \geq 0,
$$
and, thus:
$$
    a_k \innp{F(\vub_k), \vu_{k+1} - \vu} \leq a_k m\innp{\vu_{k+1} - \vub_k, \vu - \vu_{k+1}} + \innp{\vu_{k+1} - \vu_k, \vu - \vu_{k+1}}.
$$
By the standard three-point identity (which can also be verified directly): 
$$
    \innp{\vu_{k+1} - \vu_k, \vu - \vu_{k+1}} = \frac{1}{2}\|\vu - \vu_k\|^2 - \frac{1}{2}\|\vu - \vu_{k+1}\|^2 - \frac{1}{2}\|\vu_k - \vu_{k+1}\|^2.  
$$
Thus, setting $\vu = \vu^*:$
\begin{align*}
    a_k \innp{F(\vub_k), \vu_{k+1} - \vu^*} = &\; a_k m\innp{\vu_{k+1} - \vub_k, \vu^* - \vu_{k+1}}\\
    &+ \frac{1}{2}\|\vu^* - \vu_k\|^2 - \frac{1}{2}\|\vu^* - \vu_{k+1}\|^2 - \frac{1}{2}\|\vu_k - \vu_{k+1}\|^2.
\end{align*}
Observe also that: 
$$
    \innp{\vu_{k+1} - \vub_k, \vu^* - \vu_{k+1}} = \frac{1}{2}\|\vu^* - \vub_k\|^2 - \frac{1}{2}\|\vu_{k+1} - \vub_k\|^2 - \|\vu^* - \vu_{k+1}\|^2.
$$
Thus, we have:
\begin{equation}\label{eq:eg+-1}
    \begin{aligned}
        a_k \innp{F(\vub_k), \vu_{k+1} - \vu^*} = &\; \frac{1}{2}\|\vu^* - \vu_k\|^2 - \frac{1 + a_k m}{2}\|\vu^* - \vu_{k+1}\|^2\\
        &- \frac{1}{2}\|\vu_k - \vu_{k+1}\|^2 + \frac{a_k m}{2}\|\vu^* - \vub_k\|^2 - \frac{a_km}{2}\|\vu_{k+1} - \vub_k\|^2.
    \end{aligned}
\end{equation}
By similar arguments:
\begin{equation}\label{eq:eqg+-2}
    a_k\innp{F(\vu_k), \vub_k - \vu_{k+1}} = \frac{1}{2}\|\vu_{k+1} - \vu_k\|^2 - \frac{1}{2}\|\vu_{k+1} - \vub_{k}\|^2 - \frac{1}{2}\|\vub_k - \vu_k\|^2.
\end{equation}
Combining Eq.~\eqref{eq:eg+-change-in-merit}-\eqref{eq:eqg+-2}:
\begin{equation}\notag
    \begin{aligned}
        A_k f_k - A_{k-1}f_{k-1} =&\; \frac{1}{2}\|\vu^* - \vu_k\|^2 - \frac{1 + a_k m}{2}\|\vu^* - \vu_{k+1}\|^2 + a_k\innp{F(\vub_k) - F(\vu_k), \vub_k - \vu_{k+1}}\\
        & - \frac{1 + a_k m}{2}\|\vu_{k+1} - \vub_{k}\|^2 - \frac{1}{2}\|\vub_k - \vu_k\|^2.
    \end{aligned}
\end{equation}
By the condition of the while loop in Line~\ref{Line:eg+step-cond} of Algorithm~\ref{algo:eg+}, and because $A_k f_k - A_{k-1}f_{k-1}\geq 0$,
\begin{equation}\label{eq:eg+-final-ineq}
    \begin{aligned}
        \frac{1 + a_k m}{2}\|\vu^* - \vu_{k+1}\|^2 + \frac{1 + 2a_k m}{4}\|\vu_{k+1} - \vub_{k}\|^2 + \frac{1}{4}\|\vub_k - \vu_k\|^2 \leq \frac{1}{2}\|\vu^* - \vu_k\|^2.
    \end{aligned}
\end{equation}
The condition of the while loop in Line~\ref{Line:eg+step-cond} of Algorithm~\ref{algo:eg+} is satisfied for any $a_k \leq \frac{1}{2L},$ as 
\begin{align*}
     a_k\innp{F(\vub_k) - F(\vu_k), \vub_k - \vu_{k+1}} &\leq a_k L \|\vub_k - \vu_k\|\cdot \|\vub_k - \vu_{k+1}\|\\
    &\leq \frac{a_k L}{2}\big(\|\vub_k - \vu_k\|^2 + \|\vub_k - \vu_{k+1}\|^2\big),
\end{align*}
where we have used the Cauchy-Schwarz inequality, the fact that $F$ is $L$-Lipschitz, and the Young inequality. Thus, in any iteration, $a_k > \frac{1}{4L},$ and the total number of times the while loop from Line~\ref{Line:eg+step-cond} is entered is at most  $\log_2(4L/a_0).$ 

From Eq.~\eqref{eq:eg+-final-ineq}, $\|\vu^* - \vu_{k+1}\|^2 \leq \frac{1}{1+m/(4L)}\|\vu^* - \vu_k\|^2 \leq (1- \frac{m}{8L})\|\vu^* - \vu_k\|^2.$ Thus, for any $\delta > 0,$ $\|\vu^* - \vu_k\| \leq \delta$ for $k \geq \frac{16L}{m}\log(\frac{\|\vu^* - \vu_0\|}{\delta}).$ Consequently, from Eq.~\eqref{eq:eg+-final-ineq}, $\|\vub_k - \vu_k\| \leq \sqrt{2} \delta$ whenever $\|\vu^* - \vu_k\| \leq \delta.$ In particular, for $\delta = \frac{a_k m \epsilon}{5 \sqrt{2}} \geq \frac{m \epsilon}{20\sqrt{2}L},$ $\|\vub_k - \vu_k\| \leq \sqrt{2}\delta = \frac{a_k m}{5}\epsilon$ after at most $k = \frac{16L}{m}\log(\frac{20\sqrt{2}L\|\vu^* - \vu_0\|}{m\epsilon}$ (outer loop) iterations. 

It remains to show that when $\|\vub_k - \vu_k \| \leq \delta,$ $\|\vu_k - \vu^*\| \leq \epsilon,$ and so Algorithm~\ref{algo:eg+} terminates.  Observe that $\vu_k - \vub_k = a_k G_{1/a_k}(\vu_k),$ where $G_{1/a_k}$ is the operator mapping defined in Eq.~\eqref{eq:op-mapping}. Thus, using Lemma~\ref{lemma:grad-mapping-approx} and noting that $a_k \leq 1/L_{\mathrm{loc}} = \frac{\|\vub_k - \vu_k\|}{\|F(\vub_k) - F(\vu_k)\|}$, if  $\|\vub_k - \vu_k\| \leq \frac{a_k m}{5}\epsilon$, we have  
$$
    \innp{F(\vub_k), \vub_k - \vu^*}\leq \frac{2 m}{5}\epsilon\|\vub_k - \vu^*\|.
$$
On the other hand, as $F$ is $m$-strongly monotone, we also have $\innp{F(\vub_k), \vub_k - \vu^*} \geq \frac{m}{2}\|\vub_k - \vu^*\|^2.$ Hence, $\|\vub_k - \vu^*\|\leq \frac{4\epsilon}{5}.$ Finally, applying the triangle inequality and as $a+k \leq 1/m:$
$$
    \|\vu_k - \vu^*\| \leq \|\vu_k - \vub_k\| + \|\vub_k - \vu^*\| \leq \frac{\epsilon}{5} + \frac{4\epsilon}{5} = \epsilon. 
$$
Note that we have already bounded the total number of inner and outer loop iterations. Observing that each inner iteration makes 2 oracle queries to $F$ and each outer iteration makes $2$ oracle queries to $F$ outside of the inner iteration, the bound on the total number of oracle queries to $F$ follows.  
\end{proof}
\begin{replemma}{lemma:approx-resolvent}
Let $\vub_k^* = J_{F+I_{\cu}}(\vu_k),$  %
where $\vu_k\in \cu$ and $F$ is $L$-Lipschitz. Then, there exists a parameter-free algorithm that queries $F$ at most $O((L+1)\log(\frac{(L+1)\|\vu_k - \vub_k^*\|}{\epsilon}))$ times and outputs a point $\vub_k$ such that $\|\vub_k - \vub^*_k\|\leq \epsilon.$ 
\end{replemma}
\begin{proof}
Observe first that $\vub^*_k$ solves~\eqref{eq:SVI} for operator $\bar{F}(\vu) = F(\vu) + \vu - \vu_k$ over the set $\cu.$ This follows from the definition of the resolvent, which implies:
$$
    \vub^*_k + F(\vub^*_k) + \partial I_{\cu}(\vub^*_k) \ni \vu_k.
$$
Equivalently: $\zeros \in  \bar{F}(\vub_k^*) + \partial I_{\cu}(\vub^*_k)$.

The rest of the proof follows by applying Lemma~\ref{lemma:eg+convergence} to $\bar{F},$ which is $(L+1)$-Lipschitz and $1$-strongly monotone. 
%Applying, e.g.,~\cite[Theorem 1]{gidel2018variational}, a variant of the optimistic method with extrapolation initialized at $\vu_k$ (see \cite[Eqs.~(20) and (21)]{gidel2018variational}), which uses one oracle query to $F$ per iteration, converges to $\vub_k$ with $\|\vub_k - \vub_k^*\|\leq \epsilon$ in $4(L+1)\log(\frac{\|\vu_k - \vub_k^*\|}{\epsilon})$ iterations, as $\bar{F}(\vu)$ is $(L+1)$-Lipschitz and 1-strongly monotone.  
\end{proof}
%
%
%%%%%%%%%%%%%%%%
%
\subsection{Inexact Halpern Iteration}
We start by first proving the following auxiliary result.
\begin{proposition}\label{prop:bnds-of-the-iterates}
Given an initial point $\vu_0 \in \cu,$ let $\vu_k$ evolve according to Eq.~\eqref{eq:inexact-halpern}, where $\lambda_k = \frac{1}{k+1}$. Then,
$$
   (\forall k \geq 1):\quad \|\vu_k - \vu^*\| \leq \|\vu_0 - \vu^*\| + \frac{1}{k + 1}\sum_{i=1}^k i \|\ve_{i-1}\|,
$$
where $\vu^*$ is such that $\|P(\vu^*)\| = 0.$ 
\end{proposition}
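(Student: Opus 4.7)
My plan is to prove the bound by induction on $k$, with the base case $k=1$ following directly from the definitions and $\lambda_1 = 1/2$, and the inductive step relying on a triangle-inequality argument combined with a nonexpansiveness property.

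The key structural observation I would use is that $J_{F+\partial I_{\cu}} = \mathrm{Id} - P$ is firmly nonexpansive (as already noted before Proposition~\ref{prop:P-as-an-approx}), so in particular it is nonexpansive. Since $P(\vu^*)=\zeros$ gives $J_{F+\partial I_{\cu}}(\vu^*) = \vu^*$, this yields the crucial estimate
\[
  \|\vu_k - P(\vu_k) - \vu^*\| \;=\; \|J_{F+\partial I_{\cu}}(\vu_k) - J_{F+\partial I_{\cu}}(\vu^*)\| \;\leq\; \|\vu_k - \vu^*\|.
\]
Next, using $\tilde P(\vu_k) = P(\vu_k) + \ve_k$ and rewriting the recursion \eqref{eq:inexact-halpern} with $\vu^*$ subtracted from both sides, I would write
\[
  \vu_{k+1} - \vu^* \;=\; \lambda_{k+1}(\vu_0 - \vu^*) + (1-\lambda_{k+1})\bigl(\vu_k - P(\vu_k) - \vu^*\bigr) - (1-\lambda_{k+1})\ve_k,
\]
so the triangle inequality combined with the above nonexpansiveness bound yields
\[
  \|\vu_{k+1} - \vu^*\| \;\leq\; \lambda_{k+1}\|\vu_0 - \vu^*\| + (1-\lambda_{k+1})\|\vu_k - \vu^*\| + (1-\lambda_{k+1})\|\ve_k\|.
\]

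Substituting $\lambda_{k+1} = \tfrac{1}{k+2}$ and multiplying through by $(k+2)$ turns this into the clean telescoping form
\[
  (k+2)\|\vu_{k+1} - \vu^*\| \;\leq\; \|\vu_0 - \vu^*\| + (k+1)\|\vu_k - \vu^*\| + (k+1)\|\ve_k\|.
\]
The induction hypothesis is $(k+1)\|\vu_k - \vu^*\| \leq (k+1)\|\vu_0 - \vu^*\| + \sum_{i=1}^k i\|\ve_{i-1}\|$, and plugging it in and recognizing $(k+1)\|\ve_k\|$ as the $i=k+1$ term of the sum gives the target bound after dividing by $(k+2)$. For the base case, a direct computation using $\vu_1 = \vu_0 - \tfrac{1}{2}\tilde P(\vu_0)$ together with the same nonexpansiveness trick produces $\|\vu_1 - \vu^*\| \leq \|\vu_0 - \vu^*\| + \tfrac{1}{2}\|\ve_0\|$, which matches the stated inequality at $k=1$.

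I do not expect any real obstacle here. The only subtle point is to make sure to pull out the error term $\ve_k$ before invoking nonexpansiveness, rather than absorbing it into $\tilde P$, since it is the exact map $J_{F+\partial I_{\cu}}$ (not its approximation) that enjoys the nonexpansive property.
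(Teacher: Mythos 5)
Your proof is correct and follows essentially the same route as the paper's. Both arguments rest on the same key observation — that $T = \mathrm{Id} - P = J_{F+\partial I_{\cu}}$ is nonexpansive and fixes $\vu^*$ — and both derive the identical one-step inequality $\|\vu_{k+1} - \vu^*\| \leq \lambda_{k+1}\|\vu_0 - \vu^*\| + (1-\lambda_{k+1})\|\vu_k - \vu^*\| + (1-\lambda_{k+1})\|\ve_k\|$; you then close the argument by induction with the "multiply by $k+2$" telescoping, while the paper unrolls the recursion directly using $\prod_{j=i}^k(1-\lambda_j) = \frac{i}{k+1}$, which is the same computation presented slightly differently. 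One small observation: the paper's displayed rewrite of the recursion carries a $+(1-\lambda_k)\ve_{k-1}$ where the convention $\ve_k = \tilde P(\vu_k) - P(\vu_k)$ actually gives a minus sign as in your version, but since only $\|\ve_{k-1}\|$ survives the triangle inequality this is immaterial.
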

\begin{proof}
Let $T = \mathrm{Id} - P.$ Then $T(\vu^*) = \vu^*.$  By Fact~\ref{fact:nonexp-cocoerc-equiv}, $T$ is nonexpansive. Observe that we can equivalently write Eq.~\eqref{eq:inexact-halpern} as $\vu_k = \lambda_k \vu_0 + (1-\lambda_k)T(\vu_{k-1}) + (1-\lambda_k)\ve_{k-1}.$ Thus, using that $\vu^* = T(\vu^*)$:
\begin{align*}
    \|\vu_k - \vu^*\| &= \|\lambda_k (\vu_0 - \vu^*) + (1-\lambda_k)(T(\vu_{k-1}) - T(\vu^*)) + (1-\lambda_k)\ve_{k-1}\|\\
    &\leq \lambda_k\|\vu_0 - \vu^*\| + (1-\lambda_k)\|\vu_{k-1} - \vu^*\| + (1-\lambda_k)\|\ve_{k-1}\|,
\end{align*}
where we have used the triangle inequality and nonexpansivity of $T.$ The result follows by recursively applying the last inequality and observing that $\prod_{j=i}^k(1-\lambda_j) = \frac{i}{k+1}.$ 
\end{proof}
Using this proposition, we can now prove the following lemma.
\begin{replemma}{lemma:inexact-halpern}
 Let $\cc_k$ be defined as in Eq.~\eqref{eq:potential} with $P$ as the $\frac{1}{2}$-cocoercive operator, and let $L_k = 2,$ $\lambda_k = \frac{1}{k+1},$ and $A_k = \frac{k(k+1)}{2},$ $\forall k \geq 1$. If the iterates $\vu_k$ evolve according to~\eqref{eq:inexact-halpern} for an arbitrary initial point $\vu_0 \in \cu,$ then:  
    $$(\forall k \geq 1): \quad A_{k+1}\cc_{k+1} \leq A_k\cc_k + A_{k+1}\innp{\ve_k, (1-\lambda_{k+1})P(\vu_k) - P(\vu_{k+1})}.$$ 
Further, if, $\forall k \geq 1,$ $\|\ve_{k-1}\|\leq \frac{\epsilon}{4 k(k+1)},$ then $\|P(\vu_K)\|\leq \epsilon$ after at most $K = \frac{4\|\vu_0 - \vu^*\|}{\epsilon}$ iterations.
\end{replemma}
\begin{proof}
By the same arguments as in the proof of Lemma~\ref{lemma:c-k-non-inc}:
\begin{equation}\notag
    \frac{1}{2}\|P(\vu_{k+1})\|^2 \leq \innp{P(\vu_{k+1}), \vu_{k+1} - \vu_k + P(\vu_k)} - \innp{P(\vu_k), \vu_{k+1} - \vu_k + \frac{1}{2}P(\vu_k)}.
\end{equation}
From~\eqref{eq:inexact-halpern} and the definition of $\Tilde{P}$, we have that 
\begin{align*}
\vu_{k+1} - \vu_k &= \frac{\lambda_{k+1}}{1-\lambda_{k+1}}(\vu_0 - \vu_{k+1}) - P(\vu_k) - \ve_k, \text{and}\\
\vu_{k+1} - \vu_k  &= \lambda_{k+1}(\vu_0 - \vu_k) - {(1-\lambda_{k+1})}P(\vu_k) - {(1-\lambda_{k+1})}\ve_k.
\end{align*}
Hence:
\begin{align*}%\notag
    \frac{1}{2}\|P(\vu_{k+1})\|^2 \leq\;& \frac{\lambda_{k+1}}{1-\lambda_{k+1}}\innp{P(\vu_{k+1}), \vu_0 - \vu_{k+1}} - \lambda_{k+1}\innp{P(\vu_k), \vu_0 - \vu_k} \\
    &+ \frac{1 - 2\lambda_{k+1}}{2}\|P(\vu_k)\|^2 + \innp{\ve_k, (1-\lambda_{k+1})P(\vu_k) - P(\vu_{k+1})}.
\end{align*}
Plugging $\lambda_{k+1} = \frac{1}{k+2}$ in the last inequality and using the definition of $\cc_k$ and the choice of $A_k$ from the statement of the lemma completes the proof of the first part.

Using the same arguments as in the proof of Lemma~\ref{lemma:pot-dec}, we can conclude from $\quad A_{k+1}\cc_{k+1} \leq A_k\cc_k + A_{k+1}\innp{\ve_k, (1-\lambda_{k+1})P(\vu_k) - P(\vu_{k+1})},$ $\forall k \geq 1$ that:
\begin{equation}\label{eq:in-halp-err}
\begin{aligned}
    \frac{\|P(\vu_k)\|^2}{2} &\leq \frac{1}{k}\|P(\vu_k)\|\|\vu_0 - \vu^*\| + \frac{1}{A_k}\sum_{i=1}^k A_i \innp{\ve_{i-1}, (1-\lambda_i)P(\vu_{i-1}) - P(\vu_{i})}\\
    & = \frac{1}{k}\|P(\vu_k)\|\|\vu_0 - \vu^*\| + \frac{1}{k(k+1)}\sum_{i=1}^k i(i+1) \innp{\ve_{i-1}, \frac{i}{i+1}P(\vu_{i-1}) - P(\vu_{i})}.
\end{aligned}
\end{equation}
Let us now bound each $\innp{\ve_{i-1}, \frac{i}{i+1}P(\vu_{i-1}) - P(\vu_{i})}$ term. Recall that $P(\vu^*) = \zeros$ and $P$ is 2-Lipschitz (as discussed in Section~\ref{sec:prelims}, this follows from $P$ being $\frac{1}{2}$-cocoercive). Thus, we have:
\begin{align*}
    \innp{\ve_{i-1}, \frac{i}{i+1}P(\vu_{i-1}) - P(\vu_{i})} &= \innp{\ve_{i-1}, \frac{i}{i+1}(P(\vu_{i-1}) - P(\vu^*)) - (P(\vu_{i}) - P(\vu^*))}\\
    &\leq 2\|\ve_{i-1}\|\Big(\frac{i}{i+1}\|\vu_{i-1} - \vu^*\| + \|\vu_i - \vu^*\|\Big)\\
    &\leq 2\|\ve_{i-1}\|\Big(\frac{i+2}{i+1}\|\vu_0 - \vu^*\| + \frac{i}{i+1}\|\ve_{i-1}\| + \frac{2}{i+1}\sum_{j=1}^{i-1}j \|\ve_{j-1}\|\Big),
\end{align*}
where we have used Proposition~\ref{prop:bnds-of-the-iterates} in the last inequality. In particular, if $\|\ve_{i-1}\|\leq \frac{\epsilon}{4i(i+1)}$, then, $\forall i \geq 1$:
\begin{align*}
    \innp{\ve_{i-1}, \frac{i}{i+1}P(\vu_{i-1}) - P(\vu_{i})} \leq \frac{\epsilon}{2i(i+1)} \Big(\frac{i+2}{i+1}\|\vu_{0} - \vu^*\| + \epsilon/2\Big).
\end{align*}
Combining with Eq.~\eqref{eq:in-halp-err}:
\begin{equation}\label{eq:in-halp-err-final}
    \frac{\|P(\vu_k)\|^2}{2} \leq \frac{1}{k}\|P(\vu_k)\|\|\vu_0 - \vu^*\| + \frac{\epsilon}{2k}(\|\vu_0 - \vu^*\| + \epsilon/2).
\end{equation}
Observe that if $\|\vu_0 - \vu^*\|\leq \epsilon/2,$ as $P$ is 2-Lipschitz and $P(\vu^*) = \zeros,$ we would have $\|P(\vu_0)\| \leq \epsilon,$ and the statement of the second part of the lemma would hold trivially. Assume from now on that $\|\vu_0 - \vu^*\| > \epsilon/2.$ 
Suppose that $\|P(\vu_k)\| > \epsilon$ and $k \geq \frac{4\|\vu_0 - \vu^*\|}{\epsilon}.$  Then, dividing both sides of Eq.~\eqref{eq:in-halp-err-final} by $\|P(\vu_k)\|/2$ and using that $\|P(\vu_k)\| > \epsilon$ and $\|\vu_0 - \vu^*\| > \epsilon/2$, we get:
\begin{align*}
    \|P(\vu_k)\| < \frac{2\|\vu_0 - \vu^*\|(1+1/2)}{k} + \frac{2\cdot\epsilon/4}{k} <  \frac{3\epsilon}{4} + \frac{\epsilon}{4} \leq \epsilon,
\end{align*}
contradicting the assumption that $\|P(\vu_k)\| > \epsilon$ and completing the proof. 
\end{proof}
%
%%%%%%%%%%%%%%%%%%
\subsection{Strongly Monotone Lipschitz Operators}
\begin{reptheorem}{thm:restarting-Lipschitz}
Given $F$ that is $L$-Lipschitz and $m$-strongly monotone, consider running the following algorithm $\mathcal{A}$, starting with $\vub_0\in \cu$:
\begin{center}
    $(\mathcal{A}):\quad$ At iteration $k,$ invoke Algorithm~\ref{algo:mon-Lip} with error parameter $\epsilon_k = \frac{7}{16}\|\Tilde{P}(\vu_{k-1})\|.$
\end{center}
Then, $\mathcal{A}$ outputs a point $\vu_k \in \cu$ with $\|P(\vu_k)\|\leq \epsilon$ after at most $\log_2(\|\vu_0 - \vu^*\|/\epsilon)$ iterations, where w.l.o.g.~$\epsilon \leq \frac{1}{2}$. The total number of oracle queries to $F$ until this happens is $O\big( (L + \frac{L}{m})\log(\|\vu_0 - \vu^*\|/\epsilon)\log(L + \frac{L}{m})\big).$ 
\end{reptheorem}
\begin{proof}
The first part of the theorem is immediate, as each call to Algorithm~\ref{algo:mon-Lip} ensures, due to Theorem~\ref{thm:mon-op}, that 
$$
\|P(\vu_k)\| \leq \frac{\|7\Tilde{P}(\vu_{k-1})\|}{16} \leq \frac{7\|{P}(\vu_{k-1})\|}{16} + \frac{\epsilon_k}{8} \leq \frac{\|{P}(\vu_{k-1})\|}{2},
$$ 
and $\|P(\vu_0)\| \leq 2\|\vu_0 - \vu^*\|$ as $P$ is 2-Lipschitz (because it is $\frac{1}{2}$-cocoercive) and $P(\vu^*) = \zeros.$

It remains to bound the number of calls to $F$ for each call to Algorithm~\ref{algo:mon-Lip}. Using Theorem~\ref{thm:mon-op} and $\|\Tilde{P}(\vu_k)\| = \Theta(\|P(\vu_k)\|)$, each call to Algorithm~\ref{algo:mon-Lip} takes $O(\frac{L\|\vu_{k-1} - \vu^*\|}{\|P(\vu_{k-1})\|} \log(\frac{L\|\vu_{k-1} - \vu^*\|}{\|P(\vu_{k-1})\|}))$ calls to $F.$ %If $P(\vu_{k-1}) \geq m \|\vu_{k-1} - \vu^*\|,$ we would be done, so assume $P(\vu_{k-1}) < m \|\vu_{k-1} - \vu^*\|.$
Denote $\vub^*_{k-1} = J_{F+\partial I_{\cu}}(\vu_{k-1}) = \vu_{k-1} - P(\vu_{k-1}).$ Using Proposition~\ref{prop:P-as-an-approx}:
$$
    \innp{F(\vub^*_{k-1}), \vub^*_{k-1} - \vu^*}\leq \|P(\vu_{k-1})\|\|\vub^*_{k-1} - \vu^*\|.
$$
On the other hand, as $F$ is $m$-strongly monotone and $\vu^*$ is an~\eqref{eq:MVI} solution, 
$$
    m \|\vub^*_{k-1} - \vu^*\|^2 \leq \innp{F(\vub_{k-1}^*), \vub^*_{k-1} - \vu^*}.
$$
Hence: $\|\vub^*_{k-1} - \vu^*\| \leq \frac{1}{m}\|P(\vu_{k-1})\|.$ It remains to use the triangle inequality and $P(\vu_{k-1}) = \vu_{k-1} - \vub^*_{k-1}$ to obtain:
\begin{equation}\label{eq:last-eq-for-m}
    \|\vu_{k-1} - \vu^*\| \leq \Big(1 + \frac{1}{m}\Big)\|P(\vu_{k-1})\|, 
\end{equation}
which completes the proof.
%Now, we have started with the assumption that $P(\vu_{k-1}) < m \|\vu_{k-1} - \vu^*\|.$  But, from Eq.~\eqref{eq:last-eq-for-m}, this means that it must be $1 + \frac{1}{m} \leq m.$ 
\end{proof}
%
%%%%%%%%%%%%%%%%%%%%%%%%
%%%%%%%%%%%%%%%%%%%%%%%%%%
\subsection{Lower Bounds}
We make use of the lower bound from~\cite{Ouyang2019} and the algorithmic reductions between the problems considered in previous sections to derive (near-tight) lower bounds for all of the problems considered in this paper. 

The lower bounds are for deterministic algorithms working in a (first-order) oracle model. For convex-concave saddle-point problems with the objective $\Phi(\vx, \vy)$ and closed convex feasible set $\cx \times \cy,$ any such algorithm $\mathcal{A}$ can be described as follows: in each iteration $k$, $\mathcal{A}$ queries a pair of points $(\bar{\vx}_k, \bar{\vy}_k) \in \cx \times \cy$ to obtain $(\nabla_\vx \Phi(\bar{\vx}_k, \bar{\vy}_k),\, \nabla_{\vy}\Phi(\bar{\vx}_k, \bar{\vy}_k)),$  and outputs a candidate solution pair $(\vx_k, \vy_k) \in \cx \times \cy.$ Both the query points pair $(\bar{\vx}_k, \bar{\vy}_k)$ and the candidate solution pair  $(\vx_k, \vy_k)$ can only depend on (i) global problem parameters (such as the Lipschitz constant of $\Phi$'s gradients or the feasible sets $\cx, \cy$) and (ii) oracle queries and answers up to iteration $k:$
$$
    \{\bar{\vx}_i,\, \bar{\vy}_i,\, \nabla_\vx \Phi(\bar{\vx}_i, \bar{\vy}_i),\, \nabla_{\vy}\Phi(\bar{\vx}_i, \bar{\vy}_i)\}_{i=0}^{k-1}.
$$

%\todo[inline]{Need extra notation to formally state these results.}
We start by summarizing the result from~\cite[Theorem 9]{Ouyang2019}.
\begin{theorem}\label{thm:lb-from-ouyang}
For any deterministic algorithm working in the first-order oracle model described above and any $L, R_{\cx},\, R_{\cy} > 0$, there exists a problem instance with a convex-concave function $\Phi(\vx, \vy): \cx \times \cy \to \rr$ whose gradients are $L$-Lipschitz, such that $\forall k = O(d):$  
$$
    \max_{\vy \in \vy} \Phi(\vx_k, \vy) - \min_{\vx\in \cx}\Phi(\vx, \vy_k) = \Omega\Big(\frac{L({R_{\cx}}^2 + R_{\cx}R_{\cy})}{k}\Big), 
$$
where $(\vx_k, \vy_k) \in \cx \times \cy$ is the algorithm output after $k$ iterations and $R_{\cx},\, R_{\cy}$ denote the diameters of the feasible sets $\cx, \, \cy,$ respectively, and where both $\cx, \, \cy,$ are closed and convex. 
\end{theorem}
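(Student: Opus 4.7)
My plan is to construct an explicit hard convex--concave instance and show that any deterministic first-order algorithm is forced to leave a large duality gap after $k = O(d)$ queries. I would take a bilinear-plus-quadratic saddle-point problem of the form
\begin{equation*}
  \Phi(\vx, \vy) = \frac{L}{4} \|\vx\|^2 + \frac{L}{2\sqrt{2}} \innp{\mA \vx - \vb,\, \vy},
\end{equation*}
where $\mA \in \rr^{d\times d}$ is a Nesterov-style bidiagonal matrix (ones on the diagonal and $-1$'s on the first subdiagonal, rescaled so that $\|\mA\|_2 \le 2$) and $\vb = r\, \ve_1$ for a scalar $r$ tuned to the target radii. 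The feasible sets $\cx, \cy$ are Euclidean balls of radii $R_{\cx}, R_{\cy}$ centered so that the unique saddle point lies in the interior (or on the boundary, which only helps). A direct calculation verifies that $\nabla \Phi$ is $L$-Lipschitz in $(\vx, \vy)$ with the chosen constants.

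The core structural step is a Krylov-subspace (``zero-chain'') argument. Because $\mA$ is bidiagonal, evaluating $\nabla_\vx \Phi$ and $\nabla_\vy \Phi$ at any point whose nonzero coordinates lie in $\{1, \ldots, j\}$ returns gradients supported on $\{1, \ldots, j+1\}$. A standard rotation-invariance / ``resisting oracle'' argument lets me assume without loss of generality that the algorithm only queries inside the Krylov span; then by induction, after $k$ oracle calls both the queried pair $(\bar{\vx}_k, \bar{\vy}_k)$ and the output pair $(\vx_k, \vy_k)$ are supported on the first $k+1$ coordinates, so at least $d - k - 1$ coordinates of each iterate are forced to be $0$. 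This is where the hypothesis $k = O(d)$ is used.

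With this support restriction, I would lower bound the two halves of the duality gap separately. For $\max_{\vy \in \cy} \Phi(\vx_k, \vy)$, maximizing over $\vy$ in closed form yields a term proportional to $\|\mA \vx_k - \vb\|$; since $\vx_k$ cannot populate the tail coordinates of $\vb$ in the bidiagonal Krylov chain, the residual is at least a constant times $\frac{r}{\sqrt{k+1}}$, which after calibrating $r$ with $R_{\cy}$ (to saturate the constraint $\|\vy\|\le R_{\cy}$) produces the $\Omega(L R_{\cx} R_{\cy}/k)$ term. For $-\min_{\vx \in \cx} \Phi(\vx, \vy_k)$, the $\tfrac{L}{4}\|\vx\|^2$ regularizer together with the support constraint on $\vy_k$ reduces the problem to a Nesterov-type worst-case convex quadratic in $\vx$ restricted to a $(k+1)$-dimensional subspace, yielding the $\Omega(L R_{\cx}^2/k)$ term. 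Adding the two gives the claimed bound.

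The main obstacle is simultaneously tuning $r$, the quadratic weight $\tfrac{L}{4}$, and the radii so that a single instance realizes both the $R_{\cx}^2$ and $R_{\cx} R_{\cy}$ scalings with the correct constants and so that the saddle point still sits inside $\cx \times \cy$; this typically requires arguing two regimes (one where $R_{\cy}$ dominates, one where $R_{\cx}$ does) and taking the worse one, and verifying that the Krylov / support argument is unaffected by the additional $\tfrac{L}{4}\|\vx\|^2$ term (it is, since that term contributes a diagonal operator that does not expand the reachable support).
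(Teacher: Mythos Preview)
The paper does not actually prove this theorem. It is stated explicitly as a summary of \cite[Theorem~9]{Ouyang2019} and used purely as a black box: the only work the paper does with it is the chain of reductions in the proof of Lemma~\ref{lemma:lower-bounds}. So there is no ``paper's own proof'' to compare your proposal against.

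That said, your sketch is essentially the construction of the cited reference: a Nesterov-style zero-chain instance with a bilinear coupling $\innp{\mA\vx-\vb,\vy}$ driven by a bidiagonal $\mA$, analyzed via the standard Krylov-subspace/support argument to force the iterates into a low-dimensional coordinate subspace. The separation of the duality gap into a residual term producing $\Omega(LR_{\cx}R_{\cy}/k)$ and a quadratic term producing $\Omega(LR_{\cx}^2/k)$ is exactly the decomposition used there, and your caveat about tuning the radii and possibly splitting into two regimes is also how that paper proceeds. In short, you have supplied (a correct outline of) a proof that the present paper deliberately omits by citation.
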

The assumption of the theorem that $k = O(d)$ means that the lower bound applies in the high-dimensional regime $d = \Omega(\frac{L({R_{\cx}}^2 + R_{\cx}R_{\cy})}{\epsilon}),$ which is standard and generally unavoidable.

In the setting of VIs, we consider a related model in which an algorithm has oracle access to $F$ and refer to it as the operator oracle model. Similarly as for the saddle-point problems, we consider deterministic algorithms that on a given problem instance described by $(F, \cu)$ operate as follows: in each iteration $k$ the algorithm queries a point $\vub_k \in \cu$, receives $F(\vub_k),$ and outputs a solution candidate $\vu_k \in \cu$. Both $\vu_k$ and $\vub_k$ can only depend on (i) global problem parameters (such as the feasible set $\cu$ and the Lipschitz parameter of $F$), and (ii) oracle queries and answers up to iteration $k:$  
$
    \{\vub_i, F(\vub_i)\}_{i=0}^{k-1}.
$ 
Note that all methods described in this paper and most of the commonly used methods for solving VIs, such as, e.g., the mirror-prox method of~\cite{nemirovski2004prox} and dual extrapolation method of~\cite{nesterov2007dual}, work in this oracle model.
\begin{replemma}{lemma:lower-bounds}
For any deterministic algorithm working in the operator oracle model described above and any $L, D >0$, there exists a VI described by an $L$-Lipschitz-continuous operator $F$ and a closed convex feasible set $\cu$ with diameter $D$ such that:
\begin{enumerate}[label=(\alph*)]
    \item For all $\epsilon>0$ such that $k = \frac{L D^2}{\epsilon} = O(d)$, $\max_{\vu \in \cu} \innp{F(\vu_k), \vu_k - \vu} = \Omega(\epsilon)$;%\label{item:lb-SVI}
    \item For all $\epsilon>0$ such that $k = \frac{L D}{\epsilon} = O(d)$, $\max_{\vu \in \{\cu\cap \mathcal{B}_{\vu_k}\}} \innp{F(\vu_k), \vu_k - \vu} = \Omega(\epsilon)$; %\label{item:lb-MI-Lip}
    \item  If $F$ is $\frac{1}{L}$-cocoercive, then for all $\epsilon>0$ such that $k = \frac{L D}{\epsilon \log(D/\epsilon)} = O(d)$, it holds that  $$\max_{\vu \in \{\cu\cap \mathcal{B}_{\vu_k}\}} \innp{F(\vu_k), \vu_k - \vu} = \Omega(\epsilon)$$%\label{item:MI-lb-coco}
    \item If $F$ is $m$-strongly monotone, then for all $\epsilon>0$ such that $k = \frac{L}{m} = O(d)$, it holds that  $$\max_{\vu \in \{\cu\cap \mathcal{B}_{\vu_k}\}} \innp{F(\vu_k), \vu_k - \vu} = \Omega(\epsilon).$$ %\label{item:MI-lb-str-mon}
\end{enumerate}
\end{replemma}
\begin{proof}$ $\newline
%We start by arguing that Theorem~\ref{thm:lb-from-ouyang} implies a lower bound for solving~\eqref{eq:SVI}. In particular, we claim that there exists an instance that satisfies the assumptions of the theorem and for all $\epsilon > 0$ such that $k = \frac{LD^2}{\epsilon} = O(d),$ $\max_{\vu \in \cu}\innp{F(\vu_k), \vu_k - \vu} = \Omega(\epsilon).$ 
\noindent\textbf{Proof of~\ref{item:lb-SVI}:} 
Suppose that this claim was not true. Then we would be able to solve any instance with $L$-Lipschitz $F$ and $\cu$ with diameter bounded by $D$ and obtain $\vu_k$ with $\max_{\vu \in \cu}\innp{F(\vu_k), \vu_k - \vu} \leq \epsilon$  in $o(\frac{LD^2}{\epsilon})$ iterations, assuming the appropriate high-dimensional regime. In particular,  given any fixed convex-concave $\Phi(\vx, \vy)$ with $L$-Lipschitz gradients and feasible sets $\cx, \cy$ whose diameter is $D/2,$ let $\vu = [\subalign{\vx\\ \vy}],$ $F(\vu) = [\subalign{\nabla_{\vx}\Phi(\vx, \vy)\\ -\nabla_{\vy}\Phi(\vx, \vy)}]$, $\cu = \cx \times \cy.$ Then, it is not hard to verify that $F$ is monotone and $L$-Lipschitz (see, e.g.,~\cite{nemirovski2004prox,facchinei2007finite}) and the diameter of $\cu$ is $D.$ Thus, by assumption, we would be able to construct a point $\vu_k = [\subalign{\vx_k\\ \vy_k}]$ for which $\max_{\vu \in \cu}\innp{F(\vu_k), \vu_k - \vu} \leq \epsilon$  in $o(\frac{LD^2}{\epsilon})$ iterations. But then, because $\Phi$ is convex-concave, we would also have, for any $\vx \in \cx, \vy\in \cy$:
\begin{align*}
    \Phi(\vx_k, \vy_k) - \Phi(\vx, \vy_k) &= \max_{\vy \in \cy}\Phi(\vx_k, \vy_k) - \Phi(\vx_k, \vy_k) + \Phi(\vx_k, \vy_k) - \min_{\vx \in \cx}\Phi(\vx, \vy_k)\\
    &\leq \innp{\nabla_{\vy} \Phi(\vx_k, \vy_k), \vy - \vy_k} + \innp{\nabla_{\vx}\Phi(\vx_k, \vy_k), \vx_k - \vy_k} = \innp{F(\vu_k), \vu_k - \vu}.
\end{align*}
In particular, we would get:
$$
    \max_{\vy \in \cy}\Phi(\vx_k, \vy_k) - \min_{\vx \in \cx}\Phi(\vx, \vy_k) \leq \max_{\vu\in \cu}\innp{F(\vu_k), \vu_k - \vu} \leq \epsilon.
$$
Because we obtained this bound for an arbitrary $L$-Lipschitz convex-concave $\Phi$ and arbitrary feasible sets $\cx, \cy$ with diameters $D/2,$ Theorem~\ref{thm:lb-from-ouyang} leads to a contradiction.

\noindent\textbf{Proof of~\ref{item:lb-MI-Lip}:} 
If~\ref{item:lb-MI-Lip} was not true, then we would be able to obtain a point $\vu_k$ with $$\max_{\vu \in \{\cu\cap \mathcal{B}_{\vu_k}\}} \innp{F(\vu_k), \vu_k - \vu} = o(\epsilon/D)$$
in $k = \frac{LD^2}{\epsilon}$ iterations. But the same point would satisfy $\max_{\vu \in \cu} \innp{F(\vu_k), \vu_k - \vu} = o(\epsilon),$ which is a contradiction, due to~\ref{item:lb-SVI}.

\noindent\textbf{Proof of~\ref{item:MI-lb-coco}:} We prove the claim for $L = 2.$ This is w.l.o.g., due to the standard rescaling argument: if $F$ is $\frac{1}{L}$-cocoercive, then $\bar{F} = F/(2L)$ is $\frac{1}{2}$-cocoercive. Further, if, for some $\vu_k \in \cu,$ $$\max_{\vu \in \{\cu\cap \mathcal{B}_{\vu_k}\}} \innp{\bar{F}(\vu_k), \vu_k - \vu} = \Omega(\epsilon),$$ then $\max_{\vu \in \{\cu\cap \mathcal{B}_{\vu_k}\}} \innp{F(\vu_k), \vu_k - \vu} = \Omega(L\epsilon).$

Suppose that the claim was not true for a $\frac{1}{2}$-cocoercive operator $F.$ Then for any $M$-Lipschitz monotone operator $G,$ we would be able to use the strategy from Section~\ref{sec:mon-Lip} to obtain a point $\vu_k$ with $$\max_{\vu \in \{\cu\cap \mathcal{B}_{\vu_k}\}} \innp{G(\vu_k), \vu_k - \vu} = o(\epsilon)$$ in $k = \frac{M D}{\epsilon}$ iterations. This is a contradiction, due to~\ref{item:lb-MI-Lip}.

\noindent\textbf{Proof of~\ref{item:MI-lb-str-mon}:}
Suppose that the claim was not true, i.e., that there existed an algorithm that, for any $m, L >0,$ could output $\vu_k$ with $\max_{\vu \in \{\cu\cap \mathcal{B}_{\vu_k}\}} \innp{\bar{F}(\vu_k), \vu_k - \vu} = \epsilon/2$ in $k = o(L/m)$ iterations, for any $m$-strongly monotone and $L$-Lipschitz operator. 
Then for any $L$-Lipschitz monotone operator $F$, we could apply that algorithm to $\bar{F}(\cdot) = F(\cdot) + \frac{\epsilon}{2D}(\cdot - \vu_0)$ to obtain a point $\vu_k$ with $\max_{\vu \in \{\cu\cap \mathcal{B}_{\vu_k}\}} \innp{\bar{F}(\vu_k), \vu_k - \vu} = \epsilon/2$ in $k = o(LD/\epsilon)$ iterations. But then we would also have:
\begin{align*}
   \max_{\vu \in \{\cu\cap \mathcal{B}_{\vu_k}\}} \innp{{F}(\vu_k), \vu_k - \vu} &=  \max_{\vu \in \{\cu\cap \mathcal{B}_{\vu_k}\}} \innp{\bar{F}(\vu_k) - \frac{\epsilon}{2D}(\vu_k - \vu_0), \vu_k - \vu} \leq \epsilon,
\end{align*}
which is a contradiction, due to~\ref{item:lb-MI-Lip}. 
\end{proof} 
\end{document}